\newtheorem{theorem}{Theorem}[section]
\newtheorem{lemma}[theorem]{Lemma}
\newtheorem{proposition}[theorem]{Proposition}
\newtheorem{corollary}[theorem]{Corollary}
\theoremstyle{definition}
\newtheorem{definition}[theorem]{Definition}
\theoremstyle{remark}
\newtheorem{remark}[theorem]{Remark}
\newtheorem{example}[theorem]{Example}
\def\F{\mathbb{F}}
\def\Q{\mathbb{Q}}
\def\R{\mathbb{R}}
\def\Z{\mathbb{Z}}
\def\bbA{\mathbb{A}}
\def\bbB{\mathbb{B}}
\def\X{\mathbb{X}}
\def\cC{\mathcal{C}}
\def\II {\mathcal{I}}
\def\JJ {\mathcal{J}}
\def\CC{\mathcal{C}}
\newcommand{\abs}[1] {\left\lvert #1 \right\rvert}
\def\CF{\operatorname{CF}}
\def\CFh{\widehat{\operatorname{CF}}}
\def\HF{\operatorname{HF}}
\def\HFp{\operatorname{HF}^+}
\def\HFi{\operatorname{HF}^\infty}
\def\HFm{\operatorname{HF}^-}
\def\HFh{\widehat{\operatorname{HF}}}
\def\HFred{\operatorname{HF_{red}}}
\def\CFK{\operatorname{CFK}}
\def\CFKi{\operatorname{CFK}^{\infty}}
\def\CFKh{\widehat{\operatorname{CFK}}}
\def\HFKh{\widehat{\operatorname{HFK}}}
\def\Im{\operatorname{im}}
\DeclareMathOperator*{\connsum}{\raisebox{-0ex}{\scalebox{1.4}{$\#$}}}
\def\d{\partial}
\def\varep{\varepsilon}
\def\co{\colon}
\def\spinc{\textrm{Spin}^c}
\def\horz{\textup{horz}}
\def\T{\tilde{T}}
\def\CZhat{\widehat{\cC}_\Z}
\def\CZ{{\cC}_\Z}
\def\Df{\mathfrak{D}}
\def\conn {\mathbin{\#}}
\def\spincs {\mathfrak{s}}
\DeclareMathOperator{\im}{im}  
\DeclareMathOperator{\coker}{coker} \DeclareMathOperator{\lk}{lk} 
 \DeclareMathOperator{\Spin}{Spin}
 \DeclareMathOperator{\Char}{Char}
\DeclareMathOperator{\Cone}{Cone}
\DeclareMathOperator{\St}{St}
\def\x {\mathbf{x}}
\definecolor{darkgreen}{rgb}{0,0.5,0}
\definecolor{purple}{rgb}{0.5,0,0.5}
\renewcommand{\MR}[1]{}
\title{}
\subjclass[2013]{}
\author[Jennifer Hom]{Jennifer Hom}
\thanks{The first author was partially supported by NSF grant DMS-1552285 and a Sloan Research Fellowship.}
\address {School of Mathematics, Georgia Institute of Technology, Atlanta, GA 30332}
\email{hom@math.gatech.edu}
\author[Adam Simon Levine]{Adam Simon Levine}
\thanks{The second author was partially supported by NSF grants DMS-1405378 and DMS-1806437.}
\address{Department of Mathematics, Duke University, Durham, NC 27708}
\email{alevine@math.duke.edu}
\author[Tye Lidman]{Tye Lidman}
\thanks{The third author was partially supported by NSF grant DMS-1709702.}
\address{Department of Mathematics, North Carolina State University, Raleigh, NC 27607}
\email{tlid@math.ncsu.edu}
\numberwithin{equation}{section}
\title{Knot concordance in homology cobordisms}
\begin{document}
\maketitle

\begin{abstract}
Let $\CZhat$ denote the group of knots in homology spheres that bound homology balls, modulo smooth concordance in homology cobordisms. Answering a question of Matsumoto, the second author previously showed that the natural map from the smooth knot concordance group $\CC$ to $\CZhat$ is not surjective. Using tools from Heegaard Floer homology, we show that the cokernel of this map, which can be understood as the non-locally-flat piecewise-linear concordance group, is infinitely generated and contains elements of infinite order. In the appendix, we provide a careful proof that any piecewise-linear surface in a smooth $4$-manifold can be isotoped to be smooth away from cone points.
\end{abstract}

%\begin{abstract}
%Let $\CZhat$ denote the group of knots in homology spheres that bound homology balls modulo homology bordisms of pairs. Matsumoto \cite[Problem 1.31]{Kirby} asked if the natural map from the smooth knot concordance group to $\CZhat$ is an isomorphism. The second author answered this question in the negative by showing the map is not surjective. We show that the quotient of $\CZhat$ by the image of $\cC$ is infinitely generated and contains elements of infinite order.
%\end{abstract}

\section{Introduction}

Two oriented knots $K_0, K_1 \subset S^3$ are called \emph{smoothly concordant} if there is a smoothly, properly
embedded, oriented annulus $A$ in $S^3 \times [0,1]$ such that $(S^3 \times [0,1], A)$ is an oriented cobordism from $(S^3 \times \{0\}, K_0 \times \{0\})$ to $(S^3 \times \{1\}, K_1 \times \{1\})$. The  \emph{knot concordance group} $\cC$ consists of knots in $S^3$ modulo smooth concordance, under the operation induced by connected sum. The goal of this paper is to study various generalizations of the knot concordance group, where we consider knots in other $3$-manifolds and surfaces in $4$-dimensional cobordisms between them.

A \emph{homology cobordism} between two closed, oriented $3$-manifolds $Y_0, Y_1$ is a smooth, compact, oriented $4$-manifold $W$ with an identification $\partial W \cong -Y_0 \sqcup Y_1$, such that the inclusions $Y_i \rightarrow W$ induce isomorphisms on integral homology. We say that $Y_0$ and $Y_1$ are \emph{homology cobordant} if there is a homology cobordism between them. A \emph{homology $3$-sphere} is a closed, oriented $3$-manifold $Y$ such that $H_*(Y;\Z) \cong H_*(S^3;\Z)$. Here we will focus on those homology $3$-spheres which are homology cobordant to $S^3$, or equivalently those which bound homology $4$-balls; we call such manifolds \emph{homology null-cobordant}.

Formally, a \emph{knot} is a pair $(Y,K)$, where $Y$ is a closed, oriented $3$-manifold and $K$ is an oriented submanifold of $Y$ diffeomorphic to $S^1$. (For conciseness, we will often refer to $K$ as a knot when $Y$ is clear from context.) Two knots $(Y_0,K_0)$ and $(Y_1,K_1)$ are \emph{homology concordant}, denoted $(Y_0, K_0) \sim (Y_1, K_1)$, if there is a homology cobordism $W$ from $Y_0$ to $Y_1$ and a smoothly embedded annulus $A \subset W$ whose oriented boundary is identified with $-K_0 \sqcup K_1$ (under the chosen identification of $\partial W$ with $-Y_0 \sqcup Y_1$). Such a pair $(W,A)$ is called a \emph{homology concordance}. Note that if there is a diffeomorphism of pairs $(Y_0,K_0) \cong (Y_1,K_1)$, then $(Y_0,K_0)$ is homology concordant to $(Y_1,K_1)$.

Let $\CZ$ denote the group of knots in $S^3$, modulo homology concordance.\footnote{Our notation for $\CZ$ and $\CZhat$ follows Davis and Ray \cite{DavisRaySatellite}, who studied more general groups $\cC_R$ and $\widehat{\cC}_R$ of knots modulo concordance in $R$-homology cobordisms, where $R$ can be any localization of $\Z$. Here, we focus only on integer homology, although many of the results extend to rational homology as well.} A knot represents the trivial element in $\CZ$ if and only if it bounds a smoothly embedded disk in some homology $4$-ball. Note that $\CZ$ is naturally a quotient of $\cC$. It is unknown whether the quotient map $\cC \to \CZ$ is injective (i.e., whether a knot that bounds a disk in a homology $4$-ball must also bound a disk in the standard $4$-ball); this question is challenging because most familiar concordance invariants of knots in $S^3$ descend to $\CZ$.

Let $\CZhat$ denote the group of knots $(Y,K)$, where $Y$ is a homology null-cobordant homology $3$-sphere, modulo homology concordance. The group operation is induced by connected sum. To be precise, given knots $(Y_1,K_1)$ and $(Y_2,K_2)$, the connected sum operation (performed at points on $K_1$ and $K_2$) gives rise to a knot in the $3$-manifold $Y_1 \conn Y_2$, well-defined up to diffeomorphism of pairs. By slight abuse of notation, we will denote any representative of the resulting diffeomorphism class by either $(Y_1, K_1) \conn (Y_2, K_2)$ or $(Y_1 \conn Y_2, K_1 \conn K_2)$. The connected sum operation descends to give an abelian group structure on $\CZhat$. The identity element in $\CZhat$ is represented by knots that bound smooth disks in homology balls, or equivalently, knots that are homology concordant to the unknot in $S^3$.
The inverse of $(Y,K)$ is given by $-(Y, K) = (-Y,-K)$; that is, we reverse the orientation of the ambient manifold and also reverse the string orientation of the knot. There is a natural inclusion $\varphi \co \CZ \hookrightarrow \CZhat$. A knot $(Y,K)$ is in the image of $\varphi$ if and only if $(Y,K)$ is homology concordant to some knot in $S^3$.

Answering a question posed in the 1970s by Matsumoto \cite[Problem 1.31]{Kirby}, the second author showed in \cite{Levinenonsurj} that $\varphi$ is not surjective. Our main theorem builds on this result, as follows:

\begin{theorem}\label{thm:main}
The subgroup $\CZ \subset \CZhat$ is of infinite index. More specifically,
\begin{enumerate}
	\item \label{it:infgen} $\CZhat/\CZ$ is infinitely generated. Moreover, $\CZhat/\CZ$ cannot be generated by knots in any finite collection of $3$-manifolds.
	\item \label{it:Zsubgroup} $\CZhat/\CZ$ contains a subgroup isomorphic to $\Z$.
\end{enumerate}
\end{theorem}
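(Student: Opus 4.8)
The plan is to construct an explicit infinite family of pairs $(Y_n, K_n)$ where $Y_n$ bounds a homology ball, and to detect their nontriviality in $\CZhat/\CZ$ using Heegaard Floer $d$-invariants (or the related local equivalence/$\nu^+$-type machinery) of the large-surgery manifolds $Y_n$, combined with the key structural fact that these invariants are insensitive to whether one works in $S^3$ or in a homology sphere. The building blocks should be variations on Levine's example from \cite{Levinenonsurj}: take $Y$ to be a suitable homology sphere bounding a homology ball (e.g., obtained by surgery on a knot in $S^3$ whose surgery is homology-null-cobordant), and let $K \subset Y$ be a knot so that some surgery on $K$, or the knot Floer homology of $(Y,K)$ via the pair-of-pants/mapping-cone formula, records a $d$-invariant obstruction that cannot come from any knot in $S^3$.

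\medskip

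\emph{For part (\ref{it:infgen}),} I would first isolate an invariant $\Phi$ of pairs $(Y,K)$ --- built from $\HFm$ or $\HFp$ of a surgery on $K$, or from the local equivalence class of $\CFKm(Y,K)$ as a module over the relevant algebra --- with the properties: (i) $\Phi$ is a homology concordance invariant, hence descends to $\CZhat$; (ii) $\Phi$ vanishes (or takes a constrained value) on the image of $\CZ$, because for a knot $J \subset S^3$ the ambient $d$-invariants are $0$; and (iii) $\Phi$ is additive under connected sum. Then I would exhibit pairs $(Y_n,K_n)$ on which $\Phi$ takes infinitely many independent values, with the $Y_n$ pairwise distinct and, crucially, such that no finite connected sum of the $(Y_m,K_m)$ with $m \neq n$ plus an element of $\CZ$ can match $\Phi(Y_n,K_n)$ --- this is where one sees that infinitely many three-manifolds are genuinely required, since controlling $\Phi$ forces control of the summand manifolds. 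Concretely one expects $Y_n = S^3_{+1}(T_n)$ or a similar surgery for a carefully chosen sequence of knots $T_n \subset S^3$, with $K_n$ a meridian or a core, and $\Phi$ extracted from the $d$-invariants of $Y_n$ together with the filtered data of $K_n$.

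\medskip

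\emph{For part (\ref{it:Zsubgroup}),} I would take a single pair $(Y,K)$ from the above family for which $\Phi$ (or a $\Z$-valued refinement of it, such as a suitable $V_0$-type or $h$-invariant) is additive and nonzero, so that $\Phi(n \cdot (Y,K)) = n\,\Phi(Y,K)$ never lands in the image of $\CZ$ for $n \neq 0$; this shows $(Y,K)$ generates a $\Z$ inside $\CZhat/\CZ$. The additivity is the crux here: one must show the relevant Heegaard Floer quantity of $\connsum^n (Y,K)$ grows linearly, which typically follows from a connected-sum formula for $\CFKm$ together with a rank or local-equivalence argument (à la the $\varepsilon$-invariant or $\Upsilon$-type reasoning) showing no cancellation occurs.

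\medskip

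\textbf{The main obstacle} I anticipate is step (ii)--(iii) in tandem: proving that the chosen invariant $\Phi$ genuinely \emph{vanishes on all of $\CZ$} while remaining \emph{nonzero and additive} on the new examples. Since, as the introduction emphasizes, essentially all classical concordance invariants descend to $\CZ$ and hence cannot distinguish $\CZ$ from $S^3$-knots, $\Phi$ must be an invariant that genuinely "sees" the ambient manifold --- the natural candidates are the $d$-invariants of the $3$-manifold itself (which are $0$ for any surgery-free situation coming from $S^3$ but can be nonzero for $Y$) packaged together with the knot's filtered Floer complex. Making this packaging into something provably additive under connected sum, and provably trivial on $\varphi(\CZ)$, using only the mapping-cone formula and local equivalence, is the technical heart of the argument; the combinatorics of choosing the $T_n$ so that the $d$-invariants of $Y_n$ are linearly independent over $\Q$ (forcing infinitely many manifolds) is then a comparatively routine number-theoretic selection.
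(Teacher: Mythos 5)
Your plan, for both parts, hinges on producing an invariant $\Phi$ on $\CZhat$ that is simultaneously (i) a homology concordance invariant, (ii) vanishing on the image of $\CZ$, and (iii) additive under connected sum, and this is precisely the ingredient that is not available: the $d$-invariants of $1/n$-surgeries (and $V_0$-type quantities) are not concordance homomorphisms, and the paper explicitly notes (Remark \ref{rmk:summand} and the remark following the proof of part \eqref{it:infgen}) that none of the known Heegaard Floer invariants combine vanishing on $\varphi(\CZ)$ with additivity. The paper's argument for part \eqref{it:infgen} is built to avoid additivity altogether. It uses $\theta(Y,K)=\max_{m,n>0}\abs{d(Y_{1/m}(K))-d(Y_{1/n}(K))}$, which vanishes on $\CZ$ by Ni--Wu, and---this is the mechanism missing from your sketch---is bounded above by $2N_Y$, where $N_Y$ is the $U$-torsion order of $\HFred(Y)$ (Proposition \ref{prop:thetabounds}); by the K\"unneth formula $N$ behaves like a \emph{maximum}, not a sum, under connected sums. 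Hence if $\CZhat/\CZ$ were generated by knots in finitely many manifolds $Z_1,\dots,Z_m$, then $\theta$ would be uniformly bounded by $2\max_i N_{Z_i}$ on all of $\CZhat$, and the contradiction comes from an explicit family $(Y_j,K_j)$ (cores of surgeries on knots $J_j$ obtained from $2$-bridge links, with $Y_j$ bounding Mazur-type contractible manifolds) for which $\theta(Y_j,K_j)=2\lceil j/2\rceil$ is unbounded, computed via $V_0(J_j)$ and a plumbing $d$-invariant calculation (Proposition \ref{prop:theta-unbounded}). Without a bound of the form $\theta\le 2N_Y$ (or some substitute relating the obstruction to the ambient manifold alone), your step ``controlling $\Phi$ forces control of the summand manifolds'' has no mechanism, and ``linear independence of the $d$-invariants over $\Q$'' is not the relevant condition---what is needed is unboundedness measured against a manifold-only bound.

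For part \eqref{it:Zsubgroup} the same gap recurs: a $\Z$-valued, additive, $\CZ$-vanishing refinement ``of $V_0$ or $h$ type'' is exactly what is not known to exist. The paper instead exploits a \emph{relation between two invariants}: $\tau$ and $\varep$ both extend to homology concordance invariants with the same additivity properties as over $S^3$ (Propositions \ref{prop:tauprops} and \ref{prop:epsilonprops}), and for knots in $S^3$ (indeed in any integer homology sphere L-space) $\varep=0$ forces $\tau=0$ (Corollary \ref{cor:obstruction}). The filtered mapping-cone computation for the core of $+1$-surgery on the $(2,3)$-cable of the left-handed trefoil, summed with the unknot in the reversed manifold so that the ambient manifold bounds a homology ball, produces $(Y,K)$ with $\varep(Y,K)=0$ and $\tau(Y,K)=-1$ (Proposition \ref{prop:epsilonzerotaunonzero}); additivity of $\tau$ and sign-additivity of $\varep$ then give $\tau(\conn_n(Y,K))=-n$ with $\varep=0$, so no nonzero multiple lies in $\CZ$. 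Your ``no cancellation via $\varep$/$\Upsilon$ reasoning'' gestures toward this, but the proposal never isolates the two nontrivial inputs: the $S^3$-only implication $\varep=0\Rightarrow\tau=0$, and a concrete pair violating it, which requires the surgery-formula computation of the full knot Floer complex of the core rather than any single numerical invariant.
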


Theorem \ref{thm:main} can be interpreted in terms of non-locally-flat piecewise-linear concordance.
As background, every knot $K \subset S^3$ bounds a piecewise-linear (PL) disk in $B^4$, obtained by
taking the cone over $K$. Such a disk is smooth (and hence locally flat) except at the cone point. Resolving a conjecture of Zeeman \cite{ZeemanDunce}, Akbulut \cite{AkbulutZeeman} proved that the same need not hold for an
arbitrary contractible $4$-manifold: he exhibited a contractible $4$-manifold $X$ and a knot $K
\subset \partial X$ such that $K$ does not bound a PL disk (even with singularities) in $X$. However, Akbulut observed that there is a self-diffeomorphism $h \co \partial X \to \partial X$ such that $h(K)$ bounds a smoothly embedded disk in $X$. We may thus view $K$ as bounding a smooth disk in a ``different'' contractible $4$-manifold, namely $X$ with an alternate parametrization of its boundary.

Note that a knot $K \subset Y$ bounds a PL disk in some homology $4$-ball $X$ if and only if $(Y,K)$ is in $\im(\varphi)$; this can be seen by adding or deleting neighborhoods of cone point singularities. The main result of \cite{Levinenonsurj} gives a pair $(Y,K)$ such that $Y$ bounds a contractible manifold $X$ but $K$ does not bound a PL disk in $X$ or in any other homology ball $X'$ with $\partial X' \cong Y$. By the same token, two pairs $(Y_0,K_0), (Y_1,K_1) \in \CZhat$ differ by an element of $\CZ$ if and only if $K_0$ and $K_1$ cobound a PL annulus in some homology cobordism from $Y_0$ to $Y_1$. Thus, the quotient $\CZhat/\CZ$ can be interpreted as the group of knots in homology null-cobordant homology spheres modulo PL concordance in homology cobordisms. (PL concordances in $Y \times I$ have also been studied under the name {\em almost concordance} by Celoria \cite{Celoria}.) Throughout this discussion, we have implicitly used the fact that a PL embedded surface in a smooth $4$-manifold can be taken to be smooth except at finitely many cone point singularities; see Appendix \ref{app:PL-smooth} below for a careful proof.

%Let the \emph{homology concordance group }$\CZhat$ be the group of knots in homology spheres that bound homology balls modulo homology concordance.
%
%There is a natural map
%\[ \varphi \co \cC \rightarrow \CZhat. \]
%We denote the image of $\varphi$ by $\CZ$, which is the group of knots in $S^3$ modulo those that bound a disk in a homology ball. Matsumoto \cite[Problem 1.31]{Kirby} asked if $\varphi$ was an isomorphism. As far as the authors know, it remains an open question whether $\varphi$ is injective. The second author \cite[Theorem 1.2]{Levinenonsurj} showed that $\varphi$ is not surjective by providing examples of knots $K \subset Y$ that do not bound a piecewise-linear disk in any homology ball $X$ with $\d X = Y$. Such a $K \subset Y$ is not homology concordant to a knot in $S^3$, for if it were, then we could simply take the cone on $(S^3, K)$ together with the concordance to obtain a piecewise-linear disk in a homology ball.
%
%In order to understand the failure of $\varphi$ to be an isomorphism, we consider the index of $\CZ$ in $\CZhat$, or equivalently, the cokernel of $\varphi$. Our main result is the following.
%
%\begin{theorem}\label{thm:main}
%The subgroup $\CZ \subset \CZhat$ is of infinite index. More specifically,
%\begin{enumerate}
%	\item \label{it:infgen} $\CZhat/\CZ$ is infinitely generated. Moreover, in order to generate $\CZhat/\CZ$, one needs to consider knots in infinitely many distinct three-manifolds.
%	\item \label{it:Zsubgroup} $ \Z \subset \CZhat/\CZ$,
%\end{enumerate}
%\end{theorem}

\begin{remark}
The arguments also apply to the group of knots in integer homology spheres that bound rational homology balls, modulo concordances in rational homology cobordisms.
\end{remark}

One of the main difficulties in understanding $\CZhat/\CZ$ is a paucity of invariants. Indeed, in order for  a concordance invariant to descend to an invariant on $\CZhat/\CZ$, we need the invariant to vanish on all knots in $S^3$, which is typically not a desired property of a knot invariant. Rather, our strategy for proving Theorem \ref{thm:main} is to study the relations among different numerical concordance invariants derived from Heegaard Floer homology. For a knot in $S^3$, certain of these invariants satisfy relations that need not hold for a knot in an arbitrary $3$-manifold, and the failure of any of these relations for a knot $K \subset Y$ gives an obstruction to $K$ being homology concordant to a knot in $S^3$. (These relations typically hold because the Heegaard Floer homology of $S^3$ is particularly simple.)

As an example of this approach, associated to any homology sphere $Y$, there is an even integer $d(Y)$ (defined by Ozsv\'ath and Szab\'o \cite{OSabsgr}) which is invariant under homology cobordism. If two knots are homology concordant, then their $r$-surgeries are homology cobordant for any $r \in \Q$; in particular, for each $n \in \Z$, $d(Y_{1/n}(K))$ is an invariant of the class of $(Y,K)$ in $\CZhat$. Ni and Wu \cite[Proposition 1.6]{NiWu} proved that for any knot $K \subset S^3$ and any positive integers $m$ and $n$,
\begin{equation}\label{eq:dsurg}
d(S^3_{1/m}(K)) = d(S^3_{1/n}(K)).
\end{equation}
Therefore, the same must be true for any knot $K \subset Y$ that is homology concordant to a knot in $S^3$. Critically, the proof of \eqref{eq:dsurg} relies on the fact that the ambient manifold is $S^3$ (or, more precisely, on the fact that the reduced Heegaard Floer homology of $S^3$ is trivial), and its failure to hold for knots in other $3$-manifolds gives a new, elementary proof of the non-surjectivity of $\varphi$:

\begin{example} \label{ex:T23}
Let $Y = \Sigma(2,3,13)$, thought of as $-1/2$-surgery on the right-handed trefoil $T_{2,3}$, and let $K \subset Y$ denote the core of the surgery solid torus. Akbulut and Kirby \cite{AkbulutKirby} showed that $Y$ bounds a contractible $4$-manifold.  For any $n \in \Z$, note that $Y_{1/n}(K) = S^3_{1/(n-2)}(T_{2,3})$; this is true because $T_{2,3}$ and $K$ have the same exterior, and the meridian of $K$ is given by $\mu - 2\lambda$, where $\lambda$ and $\mu$ are the longitude and meridian of $T_{2,3}$, respectively. In particular, we have:
\begin{align*}
d(Y_{1/2}(K)) &= d(S^3) = 0 \\
d(Y_{1/3}(K)) &= d(S^3_1(T_{2,3})) = d(\Sigma(2,3,5)) = -2.
\end{align*}
Since these are not equal, $K$ cannot be homology concordant to any knot in $S^3$. (The same argument works with $Y' = \Sigma(2,3,25) = S^3_{-1/4}(T_{2,3})$, which bounds a contractible $4$-manifold by work of Fickle \cite{Fickle}.)
\end{example}

Building on \eqref{eq:dsurg}, given a knot $K$ in a homology sphere $Y$, define
\[
\theta(Y,K) = \max_{m,n>0} \left\lvert d(Y_{1/m}(K)) - d(Y_{1/n}(K)) \right\rvert,
\]
which is finite by Proposition \ref{prop:thetabounds} below, and is evidently invariant under homology concordance. It is clear from \eqref{eq:dsurg} that for any knot $K \subset S^3$, we have $\theta(S^3, K)=0$. Moreover, we show that $\theta(Y,K)$ is bounded from above in terms of $\HFred(Y)$, the reduced Heegaard Floer homology of $Y$ \cite{OS3manifolds1}. Recall that $\HFred$ can be defined as the quotient of $\HFp$ by large powers of $U$, and hence all elements are in the kernel of some power of $U$.  Further, this module is a finite-dimensional $\F$-vector space, where $\F = \Z/2$.  Let
\[
N_Y = \min \{n \geq 0 \mid U^n \cdot \HFred(Y) = 0 \}.
\]
In Sections \ref{sec:dinvts} and \ref{sec:knot-family}, we will prove the following two results:

\begin{proposition}\label{prop:thetabounds}
Let $K$ be a knot in a homology sphere $Y$. Then $\theta(Y, K) \leq 2N_Y$.
\end{proposition}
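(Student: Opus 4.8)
The plan is to bound the difference $d(Y_{1/m}(K)) - d(Y_{1/n}(K))$ for positive $m, n$ by analyzing the mapping cone formula for $1/k$-surgery. Recall that for any knot $K \subset Y$ and any positive integer $k$, there is a surgery exact triangle / mapping cone computing $\HFp(Y_{1/k}(K))$ in terms of the knot Floer complex: one has a chain complex $\mathbb{X}^+(1/k)$ built from copies $\Ashat_s = \CFp(Y, s)$ (large surgery pieces) for $s \in \Z$ and copies $\Bshat_s = \CFp(Y)$, with maps $\vshat_s \co \Ashat_s \to \Bshat_s$ and $\hshat_s \co \Ashat_s \to \Bshat_{s+1}$; for $1/k$-surgery with $k > 0$, the relevant truncation uses $k$ copies of the $\Ashat$-complexes (indexed so that the maps $\vshat_s, \hshat_s$ become isomorphisms for $|s|$ large). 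The correction term $d(Y_{1/k}(K))$ is read off from the bottommost tower in the $U$-nontorsion part of the homology of this mapping cone.

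First I would recall precisely how $d$ is extracted from the mapping cone: after inverting $U$, the complex is quasi-isomorphic to a single tower, and $d(Y_{1/k}(K))$ equals $d(Y)$ shifted by a grading correction coming from the framing, minus twice the maximal ``length'' by which an element of $\HFp$ of one of the $\Ashat_s$ or $\Bshat_s$ summands can fail to be in the image under $U$-powers — equivalently, the obstruction is governed by the maps $\vshat_0$ and $\hshat_0$ (or more precisely the $\vshat_s, \hshat_s$ for small $s$) and by $\HFred(Y)$, which is exactly the kernel/cokernel of the map $\HFp(Y) \to \HFi(Y)$. The key input is that, for $s$ sufficiently large (in fact for all $s$ with $|s|$ past some threshold depending only on $K$), $\vshat_s$ and $\hshat_{-s}$ induce isomorphisms on the $U$-nontorsion parts, so the discrepancy between $1/m$ and $1/n$ surgery is entirely controlled by the finitely many ``small $s$'' pieces, and in each such piece the failure of $v_s$ or $h_s$ to be a $d$-invariant-preserving isomorphism is bounded by the $U$-torsion order of $\HFred(Y)$, i.e.\ by $N_Y$.

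The concrete steps are: (i) write down the truncated mapping cone for $Y_{1/k}(K)$ and identify the summand contributing the bottom tower; (ii) observe that both $\vshat_s$ and $\hshat_s$ are, modulo $\HFred(Y)$, the identity map $\HFp(Y) \to \HFp(Y)$ on the $U$-nontorsion tower, so that composing around the mapping cone the ``defect'' accumulated is at most (twice, accounting for the grading shift convention) the maximal power $N_Y$ needed to kill $\HFred(Y)$; (iii) conclude $d(Y) - 2N_Y \le d(Y_{1/k}(K)) - (\text{framing correction}) \le d(Y)$, hence $|d(Y_{1/m}(K)) - d(Y_{1/n}(K))| \le 2N_Y$ once the framing corrections are seen to cancel (they contribute the same shift, since for $1/k$-surgery with $k>0$ the relevant $\spinc$ structure and grading shift are independent of $k$); (iv) take the max over $m,n>0$ to get $\theta(Y,K) \le 2N_Y$.

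The main obstacle is step (ii): bookkeeping the grading shifts in the mapping cone and showing that the only place the homology can ``drop'' relative to the $Y = S^3$ case is by an amount measured by the $U$-torsion order of $\HFred(Y)$, uniformly in $k$. This requires a careful comparison of the towers in $H_*(\mathbb{X}^+(1/m))$ and $H_*(\mathbb{X}^+(1/n))$ — essentially showing that increasing $k$ only appends additional ``acyclic-up-to-$\HFred$'' pieces that cannot lower the bottom of the tower by more than $2N_Y$ — and is where the hypothesis that $U^{N_Y}$ annihilates $\HFred(Y)$ does its work. The $S^3$ case of Ni--Wu, equation \eqref{eq:dsurg}, is recovered as $N_{S^3} = 0$.
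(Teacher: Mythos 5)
Your overall strategy (the mapping cone for $1/k$-surgery, with the ambiguity coming from $\HFred(Y)$) is the same as the paper's, but the pivotal claim in your step (ii) is false, and it infects step (iii). On the $U$-nontorsion towers the maps $v_s$ and $h_s$ are \emph{not} ``the identity modulo $\HFred(Y)$'': up to grading shifts they act as multiplication by $U^{V_s(K)}$ and $U^{H_s(K)}$, and these exponents are knot-dependent and unbounded, with no relation whatsoever to $N_Y$. Accordingly your intermediate inequality $d(Y)-2N_Y \le d(Y_{1/k}(K)) - (\text{framing correction}) \le d(Y)$, with a correction depending only on the framing and the $\spinc$ structure, already fails for $Y=S^3$: there $N_{S^3}=0$, yet $d(S^3_{1/n}(K)) = -2V_0(K)$ can be arbitrarily negative while $d(S^3)=0$. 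If instead you intend the ``framing correction'' to absorb $-2V_0(K)$, then the assertion that it is independent of $k$ is exactly the nontrivial input you would need to prove, not a formality: the truncated cone for $1/k$-surgery contains $k$ copies of $A_0$, so the $k$-independence of the tower contribution requires the Ni--Wu analysis, not just ``the relevant $\spinc$ structure is independent of $k$.''

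The correct argument separates the two effects you have merged. First, extending the proof of Ni--Wu's Proposition 1.6, the bottom of the tower in $\ker \Phi_{1/n,*}$ sits in grading $d(Y)-2V_0(K)$ for \emph{every} $n>0$; this is the common, $n$-independent baseline, and it is where all the knot-dependent defect of $v_0$ lives. Second, $\Phi_{1/n,*}$ surjects onto $U^N H_*(\bbB)$ for $N \gg 0$, and since $H_*(\bbB)$ is a sum of copies of $\HFp(Y)$, this gives $U^{N_Y}\cdot \coker \Phi_{1/n,*}=0$. Feeding these into the short exact sequence
\[
0 \rightarrow \coker \Phi_{1/n,*} \rightarrow \HFp(Y_{1/n}(K)) \rightarrow \ker \Phi_{1/n,*} \rightarrow 0
\]
pins $d(Y_{1/n}(K))$ in the interval $[\,d(Y)-2V_0(K)-2N_Y,\; d(Y)-2V_0(K)\,]$ for all $n>0$; the $V_0(K)$ terms cancel upon taking differences, giving $\theta(Y,K)\le 2N_Y$. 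So the repair is not ``careful bookkeeping of grading shifts'' as you suggest at the end: you must isolate $V_0(K)$, prove its independence of $n$ via the kernel analysis, and only bound the \emph{cokernel} contribution by $N_Y$, rather than attempting to bound the total defect of $v_s,h_s$ by $N_Y$.
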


\begin{proposition}\label{prop:theta-unbounded}
There exists a family of pairs $(Y_j,K_j)$ such that each $Y_j$ bounds a smooth contractible $4$-manifold and $\theta(Y_j,K_j)$ is unbounded as $j \to \infty$.
\end{proposition}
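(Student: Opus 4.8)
The goal is to construct pairs $(Y_j, K_j)$ with $Y_j$ bounding a contractible 4-manifold and $\theta(Y_j, K_j) \to \infty$. The natural strategy, generalizing Example \ref{ex:T23}, is to look for knots $K$ in $S^3$ whose exteriors can be reglued so that the core $K'$ of the surgery solid torus sits in a homology sphere bounding a contractible manifold, and for which the surgeries $Y_{1/n}(K')$ are themselves surgeries on $K$ with $d$-invariants that spread out as we vary $n$. Concretely, I would start with a family of knots $K_j \subset S^3$ — a promising candidate is a family built from torus knots or cables (e.g. $T_{2,2j+1}$, or iterated torus knots) — take $Y_j$ to be a suitable $\pm 1/q$-surgery on $K_j$ chosen (via Akbulut–Kirby / Fickle / Casson–Harer type constructions, or by appealing to known infinite families of Brieskorn spheres bounding contractible manifolds) so that $Y_j$ bounds a contractible 4-manifold, and let $K_j'$ be the core of the surgery torus. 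Then $Y_j{}_{1/n}(K_j')$ is again a surgery $S^3_{r_n}(K_j)$ on the original knot, with slopes $r_n$ running through an arithmetic-like progression.

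**Key steps, in order.** First, identify an infinite family of knots $K_j$ together with surgery slopes such that (a) $Y_j$ bounds a contractible $4$-manifold and (b) one has explicit control over $d(S^3_{r}(K_j))$ for the relevant family of slopes $r$. For torus knots or Brieskorn-type examples these $d$-invariants are computable via the Ozsv\'ath–Szab\'o surgery formula (or via the formulas of Némethi / Can–Karakurt for Brieskorn spheres) in terms of the Alexander polynomial or the semigroup of the singularity. Second, estimate $\theta(Y_j, K_j')$ from below by exhibiting two slopes $1/m_j, 1/n_j > 0$ (equivalently two original-knot slopes) for which $|d(S^3_{r_{m_j}}(K_j)) - d(S^3_{r_{n_j}}(K_j))|$ grows with $j$; the monotonicity and large-surgery behavior of $d$ under $1/n$-surgery (it stabilizes to $d(S^3_{\pm 1}(K_j))$, which is governed by $V_0(K_j)$ and $V_0(\overline{K_j})$) should let me take one slope ``at infinity'' and one near $0$. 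Third, check that the resulting $Y_j$ are genuinely homology spheres bounding contractible manifolds — this is where one invokes the specific classical constructions — and assemble the conclusion.

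**Where the difficulty lies.** The main obstacle is the simultaneous constraint: we need $Y_j$ to bound a \emph{contractible} $4$-manifold (a rigid condition — only rather special surgeries on special knots do this), while also needing the $1/n$-surgeries on the core to have $d$-invariants that genuinely spread. Balancing these is delicate; the cleanest route is probably to restrict to a well-understood infinite family of Brieskorn spheres $\Sigma(p_j,q_j,r_j)$ known to bound contractible manifolds (extending the $\Sigma(2,3,13), \Sigma(2,3,25)$ examples, e.g. via the families in the work of Casson–Harer, Fintushel–Stern, Fickle, or Akbulut–Kirby), realize each as $1/q_j$-surgery on a torus knot $T_{a_j,b_j}$, take $K_j'$ to be the core, and then the $1/n$-surgeries on $K_j'$ are $1/(n+c_j)$-surgeries on $T_{a_j,b_j}$ for an appropriate integer shift $c_j$. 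One then needs a lower bound on $|d(\Sigma(a_j,b_j; 1/n)) - d(\Sigma(a_j,b_j; 1/n'))|$ uniform in the family; since $d(S^3_{1/n}(T_{a,b}))$ for $n>0$ equals $-2V_0(T_{a,b})$ independent of $n$ but for $n<0$ equals $2V_0(T_{a,b}^r) = 2V_0(\overline{T_{a,b}})$, and $V_0(T_{2,2j+1}) = \lceil j/2 \rceil$-type quantities grow, choosing the shift $c_j$ so that the two admissible positive slopes straddle the sign change forces the difference to be $\approx 2V_0(T_{a_j,b_j}) \to \infty$. Verifying that such a sign-straddling choice is compatible with $Y_j$ bounding a contractible manifold is the crux, and I would handle it by selecting the family of examples precisely so that $Y_j = S^3_{-1/q_j}(T_{a_j,b_j})$ with $q_j$ large, so that $n = q_j$ and, say, $n=1$ give slopes $-1/(q_j - c)$ of opposite sign for the core.
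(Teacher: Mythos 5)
Your overall mechanism is the same as the paper's: take $Y_j$ to be a negative $1/q$-surgery on a knot $C_j\subset S^3$, let $K_j$ be the core of the surgery torus, observe that the positive $1/n$-surgeries on $K_j$ sweep out slopes $1/(n-q)$ on $C_j$ on both sides of $\infty$, and conclude $\theta(Y_j,K_j)\geq 2V_0(C_j)$ (using Theorem \ref{thm:signs} and $V_0(\overline{C_j})=0$), so it suffices to make $V_0(C_j)$ unbounded while $Y_j$ bounds a contractible manifold. That reduction is correct. The genuine gap is exactly at the point you flag as ``the crux'' and then wave through: you assert the contractibility can be arranged ``by selecting the family of examples precisely so that $Y_j=S^3_{-1/q_j}(T_{a_j,b_j})$,'' but no such family is known. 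The Brieskorn spheres arising as $\mp 1/n$-surgery on a torus knot are exactly $\Sigma(p,q,pqn\pm1)$ (Moser), and the ones on your list that are known to bound contractible $4$-manifolds --- $\Sigma(2,3,13)$ (Akbulut--Kirby) and $\Sigma(2,3,25)$ (Fickle) --- are all surgeries on $T_{2,3}$, which has $V_0=1$, so they give $\theta=2$ uniformly. The infinite Casson--Harer/Fickle/Stern families (e.g.\ $\Sigma(p,ps-1,ps+1)$) are not $1/n$-surgeries on torus knots at all, and you neither identify them as surgeries on knots with computable $V_0$ nor supply any other mechanism forcing $V_0\to\infty$ within a contractible-bounding family. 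Whether infinitely many $\Sigma(p,q,pqn+1)$ with $V_0(T_{p,q})$ unbounded bound contractible manifolds is not something you can quote; so the existence step, which is the whole content of the proposition, is missing.

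The paper resolves this existence problem by engineering the contractibility rather than hunting for it among Brieskorn spheres: it takes a two-bridge link $L_j=L_j^1\cup L_j^2$ with linking number $1$, defines $J_j$ by blowing down a $+1$-framed $L_j^1$, and sets $Y_j=S^3_{-1}(J_j)=S^3_{1,0}(L_j)$; since the $0$-framed component is an unknot linking the other component once, zero-dot replacement shows $Y_j$ bounds a Mazur-type contractible manifold \emph{by construction}. All the work then goes into showing $V_0(J_j)=\lceil j/2\rceil$, which is done indirectly: $S^3_{2j+1}(J_j)$ is identified via Kirby calculus with a Seifert fibered space bounding a positive-definite plumbing, its $d$-invariant in the self-conjugate $\mathrm{Spin}^c$ structure is computed with the Ozsv\'ath--Szab\'o plumbing algorithm, and Ni--Wu's integer-surgery formula converts this into $V_0(J_j)$. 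If you want to salvage your approach, you would need either such a bespoke construction (where contractibility is visible in the Kirby diagram and $V_0$ is computed by hand), or a genuinely new input producing contractible-bounding surgeries on knots of unbounded $V_0$; quoting the classical Brieskorn families does not do it.
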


\begin{proof}[Proof of Theorem \ref{thm:main} \eqref{it:infgen}]
If $\CZhat/\CZ$ were generated by the classes of knots in a finite set of homology spheres $Z_1, \dots, Z_m$, then every knot in any homology null-cobordant homology sphere would be homology concordant to some knot of the form
\[
J \subset a_1 Z_1 \conn \dots \conn a_m Z_m,
\]
where $a_1, \dots, a_m \in \Z$.  We have that $N_Z = N_{-Z}$ by \cite[Proposition 2.5]{OS3manifolds2} and $N_{Z \# Z'} = \max\{N_Z, N_{Z'}\}$ by \cite[Theorem 1.5]{OS3manifolds2}.  This implies that
\begin{equation}\label{eq:kunneth}
N_{a_1 Z_1 \conn \dots \conn a_m Z_m} = \max\{N_{Z_i} \mid a_i \ne 0\}.
\end{equation}
Thus, Proposition \ref{prop:thetabounds} would give a universal bound on $\theta(Y,K)$ for all $(Y,K)\in \CZhat$, contradicting Proposition \ref{prop:theta-unbounded}.
\end{proof}

\begin{remark}
Since the $d$-invariants of surgery are not concordance homomorphisms, we are unable at present to show that the elements in our infinite generating set are of infinite order.
\end{remark}

The proof of the second item of Theorem \ref{thm:main} relies on two other invariants, $\tau$ and
$\varep$, coming from the Heegaard Floer homology package. Ozsv\'ath and Szab\'o \cite[Section 5]{OS4ball}
defined an invariant $\tau(Y,K) \in \Z$ associated to any knot $K$ in a homology sphere $Y$, and
they showed that it induces a group homomorphism $\tau \co \CZ \to \Z$; recent work of Raoux
\cite{Raouxtau} shows that $\tau$ is actually a homomorphism on all of $\CZhat$. For knots $K
\subset S^3$, the invariant $\varep(K) \in \{-1,0,1\}$, defined by the first author
\cite{Homconcordance}, is likewise a concordance invariant and satisfies a ``sign-additivity''
property under connected sums. It is also worthwhile to mention
Ozsv\'ath--Stipsicz--Szab\'o's $\Upsilon$ invariant, which associates to each $K \subset S^3$ a
continuous, piecewise linear function $\Upsilon_K \co [0,2] \to \R$ and induces a homomorphism
$\CC \to C^0([0,2],\R)$ \cite{OSS}. All three of these invariants can be understood in terms of filtrations on the knot Floer complex of $K$. In Section \ref{sec:concinv}, we prove:

\begin{theorem} \label{thm:concinv}
The invariants $\varep$ and $\Upsilon$ both extend to knots in arbitrary homology $3$-spheres, are
invariant under homology concordance, and satisfy the same additivity properties as for knots in
$S^3$.
\end{theorem}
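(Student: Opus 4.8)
The plan is to exploit the fact that $\varep$ and $\Upsilon$ are defined entirely in terms of the doubly-filtered $\F[U,U^{-1}]$-chain homotopy type of the knot Floer complex $\CFKi(Y,K)$ together with the distinguished generator of its homology, and that this package behaves well for knots in arbitrary homology spheres. Accordingly the argument splits into three parts: extending the definitions, proving invariance under homology concordance, and verifying additivity.

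For the first part, recall that for any knot $K$ in a homology sphere $Y$ the complex $\CFKi(Y,K)$ is a finitely generated free $\F[U,U^{-1}]$-module equipped with the algebraic and Alexander filtrations, well defined up to filtered $\F[U,U^{-1}]$-equivariant chain homotopy equivalence, and $H_*(\CFKi(Y,K))\cong\HFi(Y)$. Since $\HFi$ of a homology sphere depends only on the (trivial) cup-product structure of $H^*(Y)$, we have $\HFi(Y)\cong\F[U,U^{-1}]$, with a distinguished tower generator. Thus $\CFKi(Y,K)$ carries exactly the formal structure used in the $S^3$ definitions, and Hom's definition of $\varep(Y,K)\in\{-1,0,1\}$ (via the three cases comparing the vertical and horizontal sub/quotient complexes to the tower generator) and the Ozsv\'ath--Stipsicz--Szab\'o definition of $\Upsilon_{(Y,K)}$ (via the minimal $t$-modified grading of a cycle generating the tower) carry over verbatim; independence of the choice of Heegaard data is immediate from filtered invariance of $\CFKi(Y,K)$.

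The heart of the matter is invariance under homology concordance. Given a smooth homology concordance $(W,A)$ from $(Y_0,K_0)$ to $(Y_1,K_1)$, the associated cobordism map on knot Floer homology --- the one underlying Raoux's \cite{Raouxtau} proof that $\tau$ is a homology-concordance invariant --- is a $U$-equivariant, filtered chain map $F\co\CFKi(Y_0,K_0)\to\CFKi(Y_1,K_1)$ that is grading-preserving (the grading shift $(c_1^2-2\chi(W)-3\sigma(W))/4$ vanishes since $\chi(W)=\sigma(W)=0$) and does not shift the Alexander filtration (the annulus has genus zero). Since $W$ is a homology cobordism, $F$ induces an isomorphism on $\HFi$ --- it is compatible with forgetting the knot, where this is the standard homology-cobordism isomorphism --- and hence carries the tower generator to the tower generator. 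Running the construction for the reversed concordance yields $G\co\CFKi(Y_1,K_1)\to\CFKi(Y_0,K_0)$ with the same properties, so the two complexes are locally equivalent. One then re-runs the standard $S^3$ argument: a filtered map respecting the tower shifts the filtration levels of tower-generating cycles in a controlled way, and the existence of such maps in both directions forces $\varep(Y_0,K_0)=\varep(Y_1,K_1)$ and $\Upsilon_{(Y_0,K_0)}=\Upsilon_{(Y_1,K_1)}$. A nontrivial $\HFred(Y)$ is harmless here, since all three invariants see only the standard tower $\HFi$.

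Additivity follows from the K\"unneth formula $\CFKi(Y_1\conn Y_2,K_1\conn K_2)\simeq\CFKi(Y_1,K_1)\otimes_{\F[U,U^{-1}]}\CFKi(Y_2,K_2)$, valid for knots in arbitrary $3$-manifolds, together with $\HFi(Y_1\conn Y_2)\cong\F[U,U^{-1}]$: the proofs of sign-additivity of $\varep$, of the behavior of $\varep$ and $\Upsilon$ under orientation reversal, and of $\Upsilon_{(Y_1,K_1)\conn(Y_2,K_2)}=\Upsilon_{(Y_1,K_1)}+\Upsilon_{(Y_2,K_2)}$ are purely algebraic identities for tensor products of doubly-filtered complexes with standard homology, so they transfer unchanged. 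The main obstacle is the middle part: one must confirm that the knot cobordism maps in this generality are genuine filtered local maps and that Hom's and OSS's local-equivalence computations survive the presence of $\HFred(Y)\neq 0$. The former is essentially contained in Raoux's work, and the latter is formal once one observes that all three invariants detect only the tower; so the verification, while requiring care, introduces no fundamentally new difficulty.
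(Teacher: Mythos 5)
Your proposal follows essentially the same route as the paper: extend the definitions so that they refer only to the $U$-nontorsion tower (normalized at grading $d(Y)$ for $\Upsilon$), deduce invariance from filtered, grading-preserving, $U$-equivariant chain maps in both directions induced by a homology concordance together with commutative-diagram/local-equivalence reasoning, and obtain additivity from the K\"unneth formula plus Hom's and Livingston's algebraic arguments. One correction: the concordance-induced filtered maps on $\CFKi$ are supplied by Zemke's link-cobordism functoriality (the paper's Proposition \ref{prop:Zemke}), not by Raoux, whose proof that $\tau$ is a homology-concordance invariant proceeds instead via genus bounds for surfaces in rational homology cobordisms; note also that the well-definedness of $\varepsilon$ requires verifying the $\nu$/$\nu'$ trichotomy in the presence of $\HFred(Y)\neq 0$ (the paper's Lemma \ref{lem:thetabounds} analogue, namely the lemma preceding the definition of $\varepsilon$), which your sketch takes for granted.
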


\noindent See Propositions \ref{prop:epsilonprops} and \ref{prop:upsilonprops} for the precise statements regarding additivity.

For the purposes of studying $\CZhat/\CZ$, the key property of $\tau$ and $\varep$ is the following:

\begin{proposition}[{\cite[Proposition 3.6(2)]{Homcables}}]
Suppose $K$ is a knot in $S^3$. If $\varep(S^3,K)=0$, then $\tau(S^3,K)=0$.
\end{proposition}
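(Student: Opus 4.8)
The plan is to unwind the definition of $\varep$ in terms of a simplified basis for the knot Floer complex and then read off $\tau$ from a grading computation. Write $C=\CFKi(S^3,K)$, equipped with its algebraic ($i$) and Alexander ($j$) filtrations, and set $C_v=C\{i=0\}$ and $C_h=C\{j=0\}$, so that $H_*(C_v)\cong H_*(C_h)\cong\HFh(S^3)\cong\F$ and $H_*(C_v)$ is supported in Maslov grading $0$. First I would fix a vertically simplified basis $\{x_k\}$ for $C$ over $\F[U,U^{-1}]$, with distinguished generator $x_0$, which is a vertical cycle, is not a vertical boundary, and hence represents the generator of $H_*(C_v)$; in particular $x_0$ has Maslov grading $0$, and its Alexander grading is $A(x_0)=\tau(S^3,K)$. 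Next, using the standard fact that $x_0$ may be taken to belong to a horizontally simplified basis as well, the hypothesis $\varep(S^3,K)=0$ says precisely that $x_0$ is neither the source nor the target of a horizontal arrow.

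The key observation is then that $x_0$ also represents a generator of the horizontal homology. Write $\tau=\tau(S^3,K)$ and consider $U^{\tau}x_0$; since $A(x_0)=\tau$, this element has $j$-coordinate $0$, so it lies in $C_h$. It is a cycle in $C_h$ because $x_0$ supports no outgoing horizontal arrow, and it is not a boundary in $C_h$ because $x_0$ receives no incoming horizontal arrow; hence its class generates $H_*(C_h)\cong\F$. Finally I would compare Maslov gradings. On one hand $x_0\in C_v$ has Maslov grading $0$, and multiplication by $U$ lowers the Maslov grading by $2$, so $U^{\tau}x_0$ has Maslov grading $-2\tau$. On the other hand the symmetry of $\CFKi(S^3,K)$ under interchanging the two filtrations (the symmetry $\HFKh(K,s)\cong\HFKh(K,-s)$ of knot Floer homology) gives a Maslov-grading-preserving chain homotopy equivalence $C_v\to C_h$, so $H_*(C_h)$, like $H_*(C_v)$, is supported in Maslov grading $0$. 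Since $U^{\tau}x_0$ represents a generator of $H_*(C_h)$, this forces $-2\tau=0$, that is, $\tau(S^3,K)=0$.

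There is no serious obstacle here: the argument is entirely a matter of tracking filtrations and Maslov gradings. Two points do require care. The first is the appeal to the compatible-basis lemma, so that the defining condition $\varep(S^3,K)=0$ can be read off directly from the distinguished vertical generator $x_0$. The second---and the place where sign and grading conventions are easiest to mishandle---is the claim that a generator of $H_*(C_h)$ lies in Maslov grading $0$, which is exactly where the symmetry of $\CFKi(S^3,K)$ is needed; without it the grading computation yields no constraint on $\tau$.
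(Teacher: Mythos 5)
Your argument is correct, but note that the paper contains no proof of this statement to compare against: it is quoted verbatim from \cite[Proposition 3.6(2)]{Homcables}, and even the generalization to integer homology sphere L-spaces (Proposition \ref{prop:epsilonzerotauzero}) is justified there only by the remark that Hom's proof goes through. What you have written is essentially a reconstruction of that standard argument rather than a genuinely different route. The two substantive inputs are exactly the ones you flag: (i) in a reduced, vertically simplified basis the distinguished generator satisfies $M(x_0)=0$ and $A(x_0)=\tau(S^3,K)$, and (ii) the compatible-basis lemma translating $\varep(S^3,K)=0$ (defined in this paper via $\nu=\nu'=\tau$) into the statement that $x_0$ is neither the source nor the target of a horizontal arrow; the latter is Hom's lemma on simplified bases (cf.\ \cite[Lemma 3.2]{Homcables}, \cite{Homconcordance, Homsurvey}), proved by linear algebra over $\F[U]$ independently of the present proposition, so there is no circularity. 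Granting (ii), your check that $U^{\tau}x_0$ is a horizontal cycle and not a horizontal boundary (no element of $\im \partial_{\horz}$ has an $x_0$-component) is right, and the grading count $-2\tau=0$ finishes the proof. The one point to tighten is your justification that $H_*(C\{j=0\})$ sits in Maslov grading $0$: the associated-graded symmetry is $\HFKh_{M}(K,s)\cong\HFKh_{M-2s}(K,-s)$, which carries a Maslov shift, so it is not quite the statement you want; instead invoke the grading-preserving flip map $\phi$ on $\CFKi$ exchanging the two filtrations (recalled in Section \ref{sec:surgery-cfk}), equivalently the standard identification $C\{j=0\}\simeq\CFh(S^3)$. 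Finally, observe that your proof uses only that $\HFh$ of the ambient manifold is a single copy of $\F$ (in grading $d(Y)$), so it generalizes verbatim, with all gradings shifted by $d(Y)$, to integer homology sphere L-spaces---which is precisely Proposition \ref{prop:epsilonzerotauzero} and the reason the obstruction of Corollary \ref{cor:obstruction} fails to see such ambient manifolds.
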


\begin{corollary}\label{cor:obstruction}
If $\varep(Y,K)=0$ and $\tau(Y,K)\neq 0$, then $K \subset Y$ is not homology concordant to any knot in $S^3$.
\end{corollary}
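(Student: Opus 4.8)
The plan is to deduce this immediately from the three facts already in hand: the invariance of $\varep$ under homology concordance, the invariance of $\tau$ under homology concordance, and the quoted implication that a knot in $S^3$ with $\varep = 0$ also has $\tau = 0$. So I would argue by contradiction. Suppose $K \subset Y$ is homology concordant to some knot $J \subset S^3$, i.e.\ $(Y,K) \sim (S^3, J)$ in $\CZhat$.

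First I would invoke Theorem \ref{thm:concinv}, which asserts that $\varep$ extends to knots in arbitrary homology $3$-spheres and is a homology concordance invariant; hence $\varep(S^3, J) = \varep(Y,K) = 0$. Next, since $J$ is a genuine knot in $S^3$, I would apply the Proposition of Hom \cite[Proposition 3.6(2)]{Homcables} quoted just above to conclude $\tau(S^3, J) = 0$. Finally, I would use the fact — due to Raoux \cite{Raouxtau}, as recalled in the introduction — that $\tau$ is a homology concordance invariant on all of $\CZhat$, so that $\tau(Y,K) = \tau(S^3, J) = 0$. This contradicts the hypothesis $\tau(Y,K) \neq 0$, and the corollary follows.

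There is essentially no obstacle here: the substantive content has already been absorbed into Theorem \ref{thm:concinv} (the extension and invariance of $\varep$), into Raoux's theorem (the invariance of $\tau$ on $\CZhat$), and into the cited $S^3$ statement. The only point requiring a moment's care is that all three ingredients are available in the generality of pairs $(Y,K)$ with $Y$ homology null-cobordant — but the first two are stated in exactly that generality, and the third need only be applied to the $S^3$ representative $J$. Equivalently, one may phrase the argument as a contrapositive: if $(Y,K)$ lies in $\im(\varphi)$, then it is homology concordant to some $(S^3, J)$, and the chain $\varep(Y,K) = 0 \Rightarrow \varep(S^3,J) = 0 \Rightarrow \tau(S^3,J) = 0 \Rightarrow \tau(Y,K) = 0$ shows that the pair $\varep(Y,K) = 0$, $\tau(Y,K) \neq 0$ is impossible.
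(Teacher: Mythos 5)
Your argument is correct and is essentially the paper's proof: the paper deduces the corollary immediately from Proposition~\ref{prop:tauepsilonhomcob} (homology concordance invariance of $\tau$ and $\varep$) together with Proposition~\ref{prop:epsilonzerotauzero} (the $S^3$ implication $\varep=0 \Rightarrow \tau=0$), which is exactly the chain you spell out. The only cosmetic difference is that you cite Raoux for the invariance of $\tau$, whereas the paper uses its own Proposition~\ref{prop:tauepsilonhomcob} (with Raoux noted as an alternative in a remark); this changes nothing of substance.
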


In order to apply this obstruction, we need examples of knots with $\varep=0$ and $\tau \neq 0$. We prove the following in Section \ref{sec:mappingcone}:

\begin{proposition}\label{prop:epsilonzerotaunonzero}
Let $Z$ denote $+1$-surgery on the $(2,3)$-cable of the left-handed trefoil, let $Y = Z \conn {-Z}$,
and let $K \subset Y$ denote the connected sum of the core of the surgery in $Z$ with the unknot in
$-Z$. Then $(Y,K)$ represents an element of $\CZhat$, and:
\begin{enumerate}
\item $\tau(Y,K) = -1$;
\item $\varep(Y,K)=0$; and
\item $\Upsilon_{Y,K}$ is not identically $0$.
\end{enumerate}
\end{proposition}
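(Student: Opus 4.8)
The plan is to reduce the statement to a knot-Floer computation for the core $c$ of $M = S^3_{+1}(D)$ alone, where $D$ denotes the $(2,3)$-cable of the left-handed trefoil $-T_{2,3}$, and then to carry out that computation using the surgery mapping cone.

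\textbf{Step 1 (reduction to $(M,c)$).} Since $M$ is obtained by integer surgery on a knot in $S^3$ it is a homology sphere, and $Y = M \conn {-M}$, being of the form $N \conn {-N}$, bounds a homology ball (for instance $(M \setminus B^3) \times I$); hence $Y$ is homology null-cobordant and $(Y,K)$ genuinely represents a class in $\CZhat$. The summand $-M$ is present only for this reason, since $M$ itself is not known to bound a homology ball. Now $K = c \conn U$ with $U$ the unknot in $-M$, and $\CFKi(-M,U) \simeq \CFi(-M)$ carries the trivial Alexander filtration, so $\tau(-M,U) = 0$, $\varep(-M,U) = 0$, and $\Upsilon_{-M,U} \equiv 0$. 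Combining this with the connected-sum additivity of $\tau$ (and the fact that $\tau$ is a homomorphism on $\CZhat$ \cite{Raouxtau}) together with the additivity properties of $\varep$ and $\Upsilon$ established in Theorem~\ref{thm:concinv} (Propositions~\ref{prop:epsilonprops} and~\ref{prop:upsilonprops}; in particular $\varep$ of a connected sum in which one summand has $\varep = 0$ equals $\varep$ of the other summand), we obtain $\tau(Y,K) = \tau(M,c)$, $\varep(Y,K) = \varep(M,c)$, and $\Upsilon_{Y,K} = \Upsilon_{M,c}$. It therefore suffices to show $\tau(M,c) = -1$, $\varep(M,c) = 0$, and $\Upsilon_{M,c} \not\equiv 0$; note these make sense as invariants of the pair $(M,c)$ even though $(M,c)$ need not lie in $\CZhat$.

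\textbf{Step 2 (the input complex).} The complex $\CFKi(-T_{2,3})$ is the three-generator staircase dual to that of $T_{2,3}$, with $\tau = \varep = -1$. By Hom's cabling formula \cite{Homcables}, since $\varep(-T_{2,3}) = -1$ we get $\varep(D) = -1$ and $\tau(D) = 2\tau(-T_{2,3}) + \tfrac{(2-1)(3+1)}{2} = -2 + 2 = 0$; moreover $\CFKi(D)$ can be written down explicitly as a small complex determined by $\CFKi(-T_{2,3})$ via the standard cabling computation, and with it the truncated complexes $A^+_s$ and the vertical and horizontal maps $v_s,h_s$ feeding the surgery formula. The choice of $D$ is dictated by these two facts: $\tau(D) = 0$ allows the surgery core to have vanishing $\varep$, while $\varep(D) = -1$ is the secondary structure that forces $\tau(M,c) \neq 0$.

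\textbf{Step 3 (surgery formula and extraction).} Apply the refined mapping-cone description of $\CFKi$ of the dual knot developed in Section~\ref{sec:mappingcone}: $c$ has exterior $S^3 \setminus D$, and $\CFKi(M,c)$ is computed from the $+1$-surgery mapping cone $\Cone\bigl(\bigoplus_s A^+_s \to \bigoplus_s B^+_s\bigr)$ on $D$, with the Alexander filtration of $c$ recorded by the index $s$ up to the standard grading shift. From this model one reads off (i) $\tau(M,c)$, as the lowest filtration level at which the induced map to the distinguished class of $\HFh(M)$ is surjective; (ii) $\varep(M,c)$, by comparing the vertical and horizontal simplifications of the filtered complex; and (iii) enough values $\Upsilon_{M,c}(t)$ for small $t>0$ to see that it is not identically zero. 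Carried out with the explicit $\CFKi(D)$ of Step 2, this bookkeeping should produce $\tau(M,c) = -1$, $\varep(M,c) = 0$, and $\Upsilon_{M,c} \not\equiv 0$.

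\textbf{Main obstacle.} The hard part is Step 3: correctly installing the Alexander filtration of the core inside the mapping cone, and then reducing the (potentially large) filtered complex to a small model from which $\varep$ and $\Upsilon$ can be read directly — in particular verifying that $\varep(M,c) = 0$ while at the same time $\tau(M,c) = -1 \neq 0$. This is delicate precisely because, by \cite[Proposition~3.6(2)]{Homcables}, $\varep = 0$ forces $\tau = 0$ for knots in $S^3$ — this is exactly the obstruction of Corollary~\ref{cor:obstruction} that we are exploiting — so the computation must be handled with care at the very point where $S^3$-behavior breaks down. A subsidiary issue is that $\varep$, $\Upsilon$, and the structural lemmas used to compute them must first be available for knots in arbitrary homology spheres; this is provided by Theorem~\ref{thm:concinv}, but one should confirm that the specific lemmas invoked in the extraction carry over.
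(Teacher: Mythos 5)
Your strategy is exactly the one the paper follows: reduce to the core $c$ of $+1$-surgery on the $(2,3)$-cable $J$ of the left-handed trefoil via the additivity statements (Propositions \ref{prop:tauprops}, \ref{prop:epsilonprops}, \ref{prop:upsilonprops}), note that $Y = M \conn {-M}$ bounds $(M\smallsetminus B^3)\times I$ so $(Y,K)\in\CZhat$, and then compute $\CFKi(M,c)$ from the filtered mapping cone for the dual knot. Steps 1 and 2 are correct (including the cabling values $\tau(J)=0$, $\varep(J)=-1$), and the choice of $J$ is motivated exactly as in the paper.

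The genuine gap is Step 3: you assert that the mapping-cone bookkeeping ``should produce'' $\tau(M,c)=-1$, $\varep(M,c)=0$, $\Upsilon_{M,c}\not\equiv 0$, but these values are precisely the content of the proposition, and nothing in the proposal establishes them; indeed you flag this yourself as the main obstacle. Carrying it out is the bulk of the paper's argument (Proposition \ref{prop:cfkcore} and Corollary \ref{cor:tauepsiloncore}): one must first determine the \emph{full} filtered complex $\CFKi(S^3,J)$, not just $\CFKh$ --- in particular the diagonal differentials, which the paper pins down using the symmetry of the complex, $\varep(J)=-1$, gradings, and $(\d^\infty)^2=0$; one must also fix the flip map $\phi$ identifying $C\{j\le 0\}$ with $C\{i\le 0\}$, since $h_s$ depends on it. Then one runs the Hedden--Levine cone of Section \ref{sec:surgery-cfk} with its two filtrations $\II,\JJ$, computes $\HFKh(M,c,s)$ in each Alexander grading, and reduces to a small model (the $13$-generator complex of Figure \ref{fig:CFKinftycore}); only at that point can one verify $\tau=\nu=\nu'=-1$ (from the generator at $(i,j)=(0,-1)$) --- which is the delicate point, since $\varep=0$ with $\tau\ne 0$ is impossible over $S^3$ --- and compute $\Upsilon_{M,c}(t)=\min(t,2-t)$ from the cycle $K+UG+UF$ representing the generator of $\HFi(M)$ in grading $d(M)=-2$. (Two minor points: the relevant surgery formula is reviewed in Section \ref{sec:surgery-cfk}, not Section \ref{sec:mappingcone}; and obtaining $\CFKi(J)$ ``from the standard cabling computation'' is itself not immediate --- the paper imports $\CFKh(S^3,J)$ from Petkova/Hedden and then reconstructs the diagonal part as above.) So the plan is right, but as written it reduces the proposition to an unperformed computation rather than proving it.
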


\begin{proof}[Proof of Theorem \ref{thm:main} \eqref{it:Zsubgroup}]
By Proposition \ref{prop:tauepsilonhomcob}, $\tau$ and $\varep$ are invariants of homology concordance. Consider the pair $(Y,K)$ from Proposition \ref{prop:epsilonzerotaunonzero}. Since $\varep(Y,K)=0$ but $\tau(Y,K)=-1$, $(Y,K)$ is not homology concordant to any knot in $S^3$. By Propositions \ref{prop:tauprops} and \ref{prop:epsilonprops}, it follows that $\tau(\#_n(Y,K))=-n$ and $\varep(\#_n(Y,K))=0$. Hence $(Y,K)$ generates an infinite cyclic subgroup of $\coker \varphi $.
\end{proof}

We conclude this section with several remarks that suggest further avenues of research.

\begin{remark} \label{rmk:summand}
The $\Z$ subgroup of $\CZhat/\CZ$ constructed in the proof of Theorem \ref{thm:main} \eqref{it:Zsubgroup} is not necessarily a direct summand. To produce a direct summand, one would need a surjective homomorphism $\psi \co \CZhat \to \Z$ (i.e., a homology concordance invariant that is additive under connected sum) which vanishes on $\varphi(\CZ)$. None of the aforementioned invariants have this property.
\end{remark}

\begin{remark} \label{rmk:epup}
We do not know whether there exists a knot $J \subset S^3$ with $\varep(S^3,J) =0$ but $\Upsilon_J
\not\equiv 0$, although this may well be due to the paucity of computed examples. If it can be
shown that no such knot exists, then the relationship between $\Upsilon$ and $\varep$ would provide
another obstruction to a class $(Y,K) \in \CZhat$ lying in $\im \varphi$, analogous to Corollary
\ref{cor:obstruction}. (For an example of a knot $J \subset S^3$ with $\Upsilon_J(t) \equiv 0$ but
$\varep(J) \neq 0$, see \cite{Homepup}.)
\end{remark}

\begin{remark} \label{rmk:pi1}
We may also consider a slight variant on the definition of $\CZhat$. Let $\CZhat'$ denote the subgroup of $\CZhat$ consisting of all pairs $(Y,K)$ such that $Y$ bounds a homology $4$-ball $X$ in which $K$ is freely nulhomotopic. (Two equivalent formulations of this condition are that $K$ represents the trivial element of $\pi_1(X)$, and that $K$ bounds an immersed disk in $X$.) The preceding discussion shows that $\CZ$ is contained in $\CZhat'$. Recent work of Daemi (which appeared after the original version of this article) shows that $\CZhat'$ and $\CZhat$ are not equal \cite[Remark 1.6]{Daemi}.  Namely, there exist classes in $\pi_1(\Sigma(2,3,5) \conn {-\Sigma(2,3,5)})$ which are not freely nullhomotopic in any homology ball. (See also \cite[Theorem 1.9]{Davis}.)  Note that in the context of $4$-manifold topology, $\CZhat'$ is arguably a more appropriate generalization of the concordance group than $\CZhat$, since it measures the failure of immersed disks to be replaced by embedded ones. However, in general it is difficult to determine membership in $\CZhat$ but not $\CZhat'$.

In any case, the pairs $(Y_j, K_j)$ with which we prove Proposition \ref{prop:theta-unbounded} all lie in $\CZhat'$, since the manifolds $Y_j$ bound contractible $4$-manifolds. It follows that $\CZhat'/\CZ$ is infinitely generated. We do not know whether the $(Y,K)$ from Proposition \ref{prop:epsilonzerotaunonzero} lies in $\CZhat'$, but it seems likely one can find an element of $\CZhat'$ satisfying the same conclusions.
\end{remark}

\begin{remark} \label{rmk:topological}
One can also consider the analogues of $\CZ$ and $\CZhat$ in the topological category. Namely, we
say knots $(Y_0,K_0)$ and $(Y_1,K_1)$ are \emph{topologically homology concordant} if they cobound
a locally flat embedded annulus in a topological homology cobordism between $Y_0$ and $Y_1$ (which
need not carry any smooth structure), and we let $\cC_{\Z,\mathrm{top}}$ and
$\widehat\cC_{\Z,\mathrm{top}}$ denote the corresponding concordance groups, as in
\cite{DavisRaySatellite}. Note that every homology $3$-sphere bounds a contractible topological
$4$-manifold, so there is no restriction on which pairs $(Y,K)$ are represented in
$\widehat\cC_{\Z,\mathrm{top}}$. We do not know whether the natural inclusion $\varphi\co
\cC_{\Z,\mathrm{top}} \to \widehat\cC_{\Z,\mathrm{top}}$ is an isomorphism.
\end{remark}

\subsection*{Organization}
In Sections \ref{sec:dinvts} and \ref{sec:knot-family}, we prove Propositions \ref{prop:thetabounds} and \ref{prop:theta-unbounded}, respectively, which together give the proof of Theorem \ref{thm:main} \eqref{it:infgen}. In Section \ref{sec:concinv}, we show that $\tau$, $\varep$, and $\Upsilon$ are invariants of homology concordance and prove several properties of these invariants. In Section~\ref{sec:surgery-cfk}, we review a generalization of Ozsv\'ath and Szab\'o's mapping cone surgery formula due to Hedden and the second author, which computes the knot Floer homology of the core
of a Dehn surgery.  In Section \ref{sec:mappingcone}, we use this formula to prove Proposition
\ref{prop:epsilonzerotaunonzero}. Unless otherwise specified, singular homology will be taken with
$\Z$-coefficients. When considering Heegaard Floer homology groups, we work over $\F=\Z/2\Z$.

\subsection*{Acknowledgements}
This project began while the first and third authors were members at the Institute for Advanced Study and the second was down the road at Princeton University. We are grateful to both institutions for the ideal working environment. We thank John Etnyre, Stefan Friedl, Robert Lipshitz, Dan Margalit, Yi Ni, Katherine Raoux, Danny Ruberman, and Eylem Yildiz for helpful conversations, as well as Andr\'e Haefliger for sharing a copy of the unpublished paper \cite{HaefligerLissage2}. We are also grateful to the referees for their insightful and patient suggestions through several rounds of editing, which have greatly improved the quality of this work. 

\section{$d$-invariants and concordance}\label{sec:dinvts}

In this section, we will study the invariants $N_Y$ and $\theta(Y,K)$ defined in the introduction,
culminating in the proof of Proposition \ref{prop:thetabounds}.

As noted above, for any knot $K$ in a homology sphere $Y$, and any $n \in \Z$, the integer
$d(Y_{1/n}(K))$ is an invariant of the homology concordance class of $K$. When $Y = S^3$, Ni and Wu
showed that the $d$-invariants of all Dehn surgeries on $K$ can be recovered
from the knot Floer complex of $K$ \cite[Proposition 1.6]{NiWu}. In particular, for $1/n$
surgeries, the $d$-invariant is determined by the invariant $V_0(K)$, which is a nonnegative integer defined in \cite[Section 2.2]{NiWu}. (See also \cite{RasmussenThesis}.)

\begin{theorem}[Ni--Wu \cite{NiWu}]\label{thm:signs}
Let $K$ be a knot in $S^3$.  Let $d_+$ and $d_-$ be the $d$-invariants of $+1$- and $-1$-surgery on $K$ respectively.  Then,
\[
d(S^3_{1/n}(K)) = \begin{cases}  d_- & \text{if } n < 0 \\  0 & \text{if } n = 0 \\ d_+ & \text{if } n >0. \end{cases}
\]
Moreover, $d_+ = -2V_0(K)$ and $d_- = 2V_0(\overline K)$, where $\overline{K}$ denotes the mirror of $K$.
\end{theorem}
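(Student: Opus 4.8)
The plan is to deduce the theorem from Ozsv\'ath and Szab\'o's mapping cone formula for the Heegaard Floer homology of $1/n$-surgeries, together with the conjugation symmetry of the knot complex and the orientation-reversal formula $d(-Y) = -d(Y)$; the heart of the matter is reading off the $d$-invariant from the $U$-nontorsion part of the mapping cone.

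First I would set up notation. For $n>0$, $\HFp(S^3_{1/n}(K))$ is the homology of a mapping cone built from the quotient complexes $A_s^+ = C\{\max(i, j-s) \geq 0\}$ of $\CFKi(K)$ and copies of $B^+ = C\{i \geq 0\} \simeq \CFp(S^3)$, joined by the structure maps $v_s^+, h_s^+ \co A_s^+ \to B^+$. Let $V_s$ (resp.\ $H_s$) be the nonnegative integer so that $v_s^+$ (resp.\ $h_s^+$) acts as multiplication by $U^{V_s}$ (resp.\ $U^{H_s}$) on the canonical $\cT^+$-tower in homology. I would then record the standard facts: since $\CFKi(K)$ is supported in the band $|i - j| \leq g(K)$, one has $A_s^+ = B^+$ and $v_s^+ = \id$ for $s \geq g(K)$, so $V_s = 0$ there, and symmetrically $h_s^+$ is a quasi-isomorphism for $s \leq -g(K)$; the conjugation symmetry of the knot complex gives $H_s = V_{-s}$; and a grading count gives $H_s = V_s + s$. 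Consequently $(V_s)$ is non-increasing with $V_s - 1 \leq V_{s+1}$, $(H_s)$ is non-decreasing, and $H_{-1} = V_1 \leq V_0$.

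Next comes the truncation and the $d$-invariant computation. Because $v_s^+$ is a quasi-isomorphism for $s \gg 0$ and $h_s^+$ for $s \ll 0$, the bi-infinite mapping cone is quasi-isomorphic to a finite zig-zag subquotient complex; schematically, for $n = 1$,
\[
\cdots \longleftarrow B_{-1}^+ \xleftarrow{\, v_{-1}^+\,} A_{-1}^+ \xrightarrow{\, h_{-1}^+\,} B_0^+ \xleftarrow{\, v_0^+\,} A_0^+ \xrightarrow{\, h_0^+\,} B_1^+ \longleftarrow \cdots
\]
(for general $n$, the $s$-th $A$-slot holds $A_{\lfloor s/n \rfloor}^+$). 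I would then restrict to the $\cT^+$-towers and chase the zig-zag: the homology of the mapping cone contains a single surviving tower, and its bottom generator lies $2\max\{V_0, H_{-1}\}$ below the bottom of the tower of $\HFp(S^3)$, because the slot $B_0^+$ --- the one supporting the surviving tower --- is hit by $v_0^+$ from $A_0^+$ (contributing $U^{V_0}$) and by $h_{-1}^+$ from $A_{-1}^+$ (contributing $U^{H_{-1}}$), while the monotonicity of $(V_s)$ and $(H_s)$ guarantees that nothing further along the zig-zag pushes the bottom of the tower down any more. Using $H_{-1} = V_1 \leq V_0$, this gives $d(S^3_{1/n}(K)) = -2V_0(K)$ for every $n > 0$ --- in particular all positive surgeries agree and $d_+ = -2V_0(K)$. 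For $n < 0$, combine $-S^3_{1/n}(K) = S^3_{1/|n|}(\overline{K})$ with $d(-Y) = -d(Y)$ to get $d(S^3_{1/n}(K)) = -d(S^3_{1/|n|}(\overline{K})) = 2V_0(\overline{K})$, independent of $n < 0$, so $d_- = 2V_0(\overline{K})$; and $S^3_{1/0}(K) = S^3$ gives $0$.

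The main obstacle is the last chase: verifying that, in the truncated mapping cone with its internal grading shifts, the bottom of the surviving $\cT^+$-tower is controlled exactly by $\max\{V_0, H_{-1}\}$ and not by some $V_s$ or $H_s$ deeper in the truncated tails. This is where the inequalities $V_s \geq V_{s+1} \geq V_s - 1$ and the identity $H_s = V_s + s$ do the real work, ruling out the tail contributions; getting the absolute gradings exactly right, so that one obtains $-2\max\{V_0, H_{-1}\}$ rather than that quantity off by a shift, is the only genuinely fiddly part.
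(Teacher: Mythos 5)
This statement is not proved in the paper at all: it is imported verbatim from Ni--Wu (their Proposition 1.6, specialized to $1/n$-surgeries), and the same machinery only reappears later in black-box form as properties (a)--(c) in the proof of Lemma \ref{lem:thetabounds}. So the comparison is really with Ni--Wu's argument, and your sketch is essentially a reconstruction of it: the Ozsv\'ath--Szab\'o mapping cone for $1/n$-surgery, truncation using that $v_s^+$ (resp.\ $h_s^+$) is a quasi-isomorphism for $s \ge g$ (resp.\ $s \le -g$), the standard facts $V_s - 1 \le V_{s+1} \le V_s$, $H_s = V_{-s}$, $H_s - V_s = s$, the conclusion $d(S^3_{1/n}(K)) = d(S^3_{1/n}(U)) - 2\max\{V_0, H_{-1}\} = -2V_0(K)$ for all $n>0$, and mirroring together with $d(-Y)=-d(Y)$ for $n<0$. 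That is the correct route and the key inequalities are exactly the ones that make it work. One step is loosely worded: since multiplication by powers of $U$ is surjective on $\cT^+$, for $Y=S^3$ the map $\Phi_*$ on the truncated cone surjects onto $H_*(\bbB)$, so the surviving tower of $\HFp(S^3_{1/n}(K))$ is identified with $\ker \Phi_*$ and lives in the $A$-part of the cone; it is not ``supported in the slot $B_0^+$.'' The correct bookkeeping is to locate the bottom of the kernel tower, i.e.\ the lowest-graded tower element of $\bigoplus_s H_*(A^+_{\lfloor s/n\rfloor})$ that can be corrected to lie in $\ker\Phi_*$, and it is there that $\max\{V_0,H_{-1}\}$, the monotonicity of $(V_s)$ and $(H_s)$, and the grading shifts normalizing $d(S^3_{1/n}(U))=0$ enter. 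With that rephrasing, and the absolute-grading computation you already flag as the fiddly part, your outline goes through and recovers the cited result.
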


The proof of Theorem \ref{thm:signs} breaks down when the ambient manifold $Y$ has nontrivial reduced Heegaard Floer homology, but it can be modified to give bounds on $d(Y_{1/n}(K))$. We will focus only on the case where $n>0$. As in the introduction, define
\begin{align*}
\theta(Y,K) & = \max_{m,n>0} \abs{d(Y_{1/m}(K)) - d(Y_{1/n}(K))} \\
N_Y &= \min \{n \geq 0 \mid U^n \cdot \HFred(Y) = 0 \}.
\end{align*}
The following lemma establishes that $\theta(Y,K)$ is finite and proves Proposition~\ref{prop:thetabounds} from the introduction.

%\begin{lemma}\label{lem:thetabounds}
%Let $K$ be a knot in an integer homology sphere $Y$.
%\begin{enumerate}
%\item \label{it:theta} We have $\theta(Y,K) \leq 2N_Y$. In particular, for any knot $K$ in a homology sphere $Y$, the invariant $\theta(Y,K)$ is well-defined.
%\item \label{it:thetaconc} If $(Y,K) \sim (Y',K')$, then $\theta(Y,K) = \theta(Y',K')$.
%\end{enumerate}
%\end{lemma}

\begin{lemma}\label{lem:thetabounds}
%Let $K$ be a knot in an integer homology sphere $Y$. 
$  $
\begin{enumerate}
\item \label{it:theta} For any knot $K$ in a homology sphere $Y$, we have $\theta(Y,K) \leq 2N_Y$; thus, $\theta(Y,K)$ is well-defined.
\item \label{it:thetaconc} If $(Y,K) \sim (Y',K')$, then $\theta(Y,K) = \theta(Y',K')$.
\end{enumerate}
\end{lemma}

\begin{proof}
To prove \eqref{it:theta}, we will adapt the proof of Ni and Wu's formula for $d$ invariants of surgeries \cite [Proposition 1.6] {NiWu} to the case of a knot in an arbitrary homology sphere $Y$ rather than just $S^3$. This proof uses Ozsv\'ath and Szab\'o's mapping cone formula for rational surgeries \cite{OSrational}. We will revisit the mapping cone formula for $+1$ surgery in Section \ref{sec:surgery-cfk}; for now, we simply point out three key properties:
%While the last two properties
%listed below are proved only for knots in $S^3$ by Ni and Wu \cite{NiWu}, the arguments easily extend to the
%general case.
\begin{enumerate}[label=(\alph*)]
\item\label{cone:triangle} For any $n \neq 0$, the Heegaard Floer homology of $1/n$-surgery fits into an exact triangle
\[
\begin{tikzcd}[column sep=small]
H_*(\bbA) \arrow{rr}{\Df_{1/n}} & & H_*(\bbB) \arrow{dl} \\
& \HFp(Y_{1/n}(K)) \arrow{ul} &
\end{tikzcd}
\]
where $H_*(\bbB)$ is a sum of infinitely many copies of $\HFp(Y)$ and $H_*(\bbA)$ is an infinite sum of the Heegaard Floer homologies of large surgeries on $K$ in varying spin$^c$ structures.
\item\label{cone:surjective} When $n$ is positive, the image of $\Df_{1/n}$ contains $U^N H_*(\bbB)$ for $N \gg 0$.
\item\label{cone:grading} When $n$ is positive, the minimum grading of an element in $\ker \Df_{1/n}$ in the image of $U^N$ for $N \gg 0$ is given by $d(Y)-2V_0(K)$.
    %Here $V_0(K)$ is a non-negative integer that does not depend on $n$ (but does depend on $K$)
\end{enumerate}
Property~\ref{cone:triangle} is simply \cite[Equation (1)]{NiWu}; the other two can be extracted from Ni and Wu's work as follows. The proof of \cite[Proposition 1.6]{NiWu} relies on four key lemmas: Lemmas 2.4, 2.7, 2.8, and 2.9.  The first two lemmas apply verbatim for knots in arbitrary homology spheres, while Lemma 2.8 now takes the form of \ref{cone:surjective} instead of a surjection onto all of $H_*(\bbB)$.  The proof of Lemma 2.9 only shows that the elements in $\ker \Df_{1/n}$ in the image of $U^N$ for $N \gg 0$ can alternatively be computed through the kernel of the restriction of $\Df_{1/n}$ to $U^{N'} H_*(\bbA)$ for ${N'}  \gg 0$.  The proof of Proposition 1.6 identifies the minimal grading of elements in $H_*(\bbA)$ that are simultaneously in the image of $U^N$ for $N \gg 0$ and in the kernel of $\Df_{1/n}$ restricted to $U^{N'} H_*(\bbA)$ for $N' \gg 0$.  In particular, their argument computes this grading to be $d(Y) - 2V_0(K)$, as in \ref{cone:grading}.

With this, we can now quickly give the proof.   The exact triangle in Property~\ref{cone:triangle} yields the short exact sequence
\[ 0 \rightarrow \coker \Df_{1/n} \rightarrow \HFp(Y_{1/n}(K)) \rightarrow \ker \Df_{1/n} \rightarrow 0 .\]
From Property~\ref{cone:surjective}, we deduce that $U^{N_Y} \coker \Df_{1/n} = 0$ when $n > 0$.

Thus, when $n > 0$, by Property~\ref{cone:grading}, the lowest grading of an element in $\HFp(Y_{1/n}(K))$ in the image of $U^N$ for all $N \gg 0$ is at least $d(Y)-2V_0(K)-2N_Y$.  This implies that
\[ d(Y)-2V_0(K)-2N_Y \leq d(Y_{1/n}(K)) \leq d(Y)-2V_0(K)\]
for all integers $n>0$. (For the second inequality, recall that $d(Y_{1/n}(K))$ is equal to the minimum grading of an element in $U^N \HFp(Y_{1/n}(K))$ for $N \gg 0$.) In particular, $|d(Y_{1/m}(K))-d(Y_{1/n}(K))| \leq 2N_Y$ for all positive integers $m, n$, as desired.

Finally, to prove \eqref{it:thetaconc}, we observe that if $(Y,K) \sim (Y',K')$, then $Y_{1/n}(K)$ is homology cobordant to $Y'_{1/n}(K')$ for all $n$, and in particular $d(Y_{1/n}(K))=d(Y'_{1/n}(K'))$. Therefore, $\theta(Y,K) = \theta(Y',K')$.
\end{proof}

\section{Infinite generation} \label{sec:knot-family}

As noted in the introduction, to complete the proof that $\CZhat/\CZ$ is infinitely generated (Theorem~\ref{thm:main} \eqref{it:infgen}), we must simply find a family of pairs $(Y_j,K_j) \in \CZhat$ for which $\theta$ is unbounded as $j \to \infty$. We will arrange that each $Y_j$ bounds
a contractible manifold, which guarantees that these elements are actually contained in the subgroup $\CZhat'$ (see Remark \ref{rmk:pi1}).

For $j \ge 1$, let $L_j \subset S^3$ denote the $2$-bridge link shown in either part of Figure \ref{fig:Lj}. (We leave to the reader to check that these diagrams are equivalent.) Denote the components of $L_j$ by $L_j^1$ and $L_j^2$, and observe from Figure \ref{subfig:Lj-symmetric} that there is an involution of $S^3$ which exchanges the two components. Note that $\lk(L_j^1, L_j^2) = \pm 1$ depending on the choice of orientation.

\begin{figure}
\subfigure[]{
\labellist
 \pinlabel $j$ at 36 38
 \pinlabel $j+1$ at 105 38
% \pinlabel $2j+2$ [t] at 106 8
% \pinlabel $1$ [b] at 106 68
 \pinlabel $L_j^1$ [r] at 8 58
 \pinlabel $L_j^2$ [r] at 8 18
\endlabellist
\includegraphics{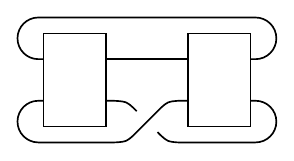} \label{subfig:Lj}}
\quad
\subfigure[]{
 \labellist
 \pinlabel $L_j^1$ [r] at 8 58
 \pinlabel $L_j^2$ [r] at 8 18
 \pinlabel $j+1$ at 222 38
 \endlabellist
\includegraphics{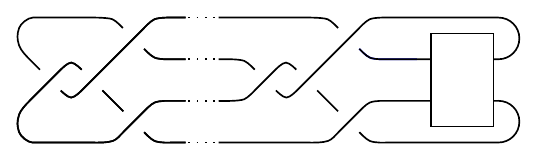} \label{subfig:Lj-symmetric}}
\caption{Two diagrams for the $2$-bridge link $L_j$. Each box indicates the number of right-handed whole twists. In the second figure, the repeated group of four crossings occurs $j$ times.} \label{fig:Lj}
\end{figure}

Let $J_j$ be the knot in $S^3$ obtained from $L_j^2$ by performing $+1$ surgery on $L_j^1$ and
blowing down. It follows that $S^3_n(J_j) = S^3_{1,n+1}(L_j)$ for any integer $n$. In particular, let
$Y_j = S^3_{-1}(J_j) = S^3_{1,0}(L_j)$, and let $K_j \subset Y_j$ denote the core circle of the surgery solid torus from the surgery on $J_j$. By zero-dot replacement (see, e.g. \cite[Section 5.4]{GompfStipsicz}), we see that $Y_j$ bounds a Mazur-type contractible $4$-manifold built from a single $1$-handle and a single $2$-handle. (These Mazur manifolds have also been studied by Akbulut and Karakurt \cite{AkbulutKarkurt}; using their notation, $Y_j = \partial (W_j(1))$.) Let $K_j \subset Y_j$ denote the core circle of the surgery solid torus. The main result of this section is as follows:

\begin{theorem} \label{thm:theta(YjKj)}
For each $j \ge 1$, we have
\[
\theta(Y_j,K_j) = 2 \left\lceil \frac{j}{2} \right\rceil.
\]
In particular, $\theta(Y_j,K_j)$ is unbounded as $j \to \infty$.
\end{theorem}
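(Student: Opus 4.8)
The plan is to reduce $\theta(Y_j,K_j)$ to a single $d$-invariant of the knot $J_j\subset S^3$, and then to pin that invariant down using the knot Floer homology of $J_j$.

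\smallskip
\emph{Reduction to $V_0(J_j)$.} Because $K_j$ is the core of the surgery solid torus of $S^3_{-1}(J_j)$, its exterior in $Y_j$ is canonically identified with $S^3\setminus N(J_j)$; under this identification the meridian of $K_j$ is the slope-$(-1)$ curve $-\mu_{J_j}+\lambda_{J_j}$ on $\partial N(J_j)$, while the Seifert longitude of $K_j$ is $\lambda_{J_j}$, being the unique nullhomologous slope in the common exterior. A change-of-basis computation, formally identical to the one carried out in Example~\ref{ex:T23}, then gives $Y_{j,1/n}(K_j)=S^3_{1/(n-1)}(J_j)$ for every integer $n$. Feeding this into Theorem~\ref{thm:signs}, for $n\ge 1$ the $d$-invariant of $Y_{j,1/n}(K_j)$ equals $0$ when $n=1$ (as $S^3_\infty(J_j)=S^3$) and $d_+(J_j)=-2V_0(J_j)$ when $n\ge 2$. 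Hence $\{\,d(Y_{j,1/n}(K_j)):n>0\,\}=\{0,-2V_0(J_j)\}$, so $\theta(Y_j,K_j)=2V_0(J_j)$, and the theorem becomes the statement $V_0(J_j)=\lceil j/2\rceil$.

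\smallskip
\emph{Computing $V_0(J_j)$.} This is the heart of the argument, and I would prove the two inequalities separately. For $V_0(J_j)\le\lceil j/2\rceil$, Proposition~\ref{prop:thetabounds} together with the previous paragraph gives $2V_0(J_j)=\theta(Y_j,K_j)\le 2N_{Y_j}$, so it suffices to show $N_{Y_j}\le\lceil j/2\rceil$; since $Y_j=S^3_{-1}(J_j)=S^3_{1,0}(L_j)$ comes with the explicit surgery diagram of Figure~\ref{fig:Lj}, one can compute $\HFred(Y_j)$ (directly from that diagram, or via the mapping-cone formula once $\CFKi(J_j)$ is known) and read off $N_{Y_j}$. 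For $V_0(J_j)\ge\lceil j/2\rceil$ one must show $d(S^3_1(J_j))\le-2\lceil j/2\rceil$, where $S^3_1(J_j)=S^3_{1,2}(L_j)$; this manifold one can either recognize as a plumbed (Seifert fibered) space and evaluate via the Ozsv\'ath--Szab\'o plumbing $d$-invariant algorithm, or compute directly from $\CFKi(J_j)$.

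\smallskip
The technical crux in every branch above is pinning down $\HFKh(J_j)$ together with its differentials. Since $J_j$ arises from the alternating two-bridge link $L_j$ by a single $+1$ blow-down, I would first check whether $J_j$ remains Floer-thin (equivalently quasi-alternating, which is plausible given its construction): if so, and if $\tau(J_j)=j$ — a quantity one can compute from the Alexander polynomial and signature of $L_j$ — then the thin-complex identity $V_0=\lceil\tau/2\rceil$ for $\tau\ge 0$ immediately yields $V_0(J_j)=\lceil j/2\rceil$. If thinness fails, one instead extracts a Seifert surface and band presentation for $J_j$ from Figure~\ref{fig:Lj}, writes down $\CFKi(J_j)$ explicitly, and computes $V_0$ from the resulting complex. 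Carrying this last computation out uniformly in $j$ is the step I expect to be the most delicate, since a single $+1$ blow-down can substantially complicate the knot Floer complex.
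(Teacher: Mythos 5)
Your reduction step is exactly the paper's: from $(Y_j)_{1/n}(K_j)=S^3_{1/(n-1)}(J_j)$ and Theorem \ref{thm:signs}, the theorem is equivalent to $V_0(J_j)=\lceil j/2\rceil$ (this is Proposition \ref{prop:V0(Jj)}). The gap is that you never actually prove this equality; you only list candidate strategies, each of which depends on unestablished input. Thinness of $J_j$ is not verified (and nothing in the construction guarantees it survives the $+1$ blow-down; the paper gives no indication that $J_j$ is thin, and Remark \ref{rmk:CFK(Jj)} notes that $\CFKi(S^3,J_j)$ is hard to describe, with the number of generators growing quadratically in $j$ --- precisely the computation you defer to as a fallback). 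Your upper bound via $2V_0(J_j)\le 2N_{Y_j}$ is not a shortcut: you would need $N_{Y_j}\le\lceil j/2\rceil$, and computing $\HFred(Y_j)=\HFred(S^3_{-1}(J_j))$ requires essentially the same knot Floer data you are trying to avoid (a priori one only gets $N_{Y_j}\ge V_0(J_j)$, so this bound could fail to be sharp). Likewise, your lower bound hinges on recognizing $S^3_1(J_j)=S^3_{1,2}(L_j)$ as a plumbed manifold amenable to the Ozsv\'ath--Szab\'o algorithm, which is speculative: the Kirby calculus that produces a nice plumbing in the paper depends on the particular framing.

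For comparison, the paper's route is to choose the surgery coefficient $2j+1$ precisely so that $M_j=S^3_{2j+1}(J_j)=S^3_{1,2j+2}(L_j)$ becomes, after an explicit sequence of blow-ups, handleslides, and blow-downs, the Seifert fibered space $M(2;(2j+1,2j),(2j+1,j+1),(2j+3,j+2))$, which bounds a positive-definite plumbing with a single bad vertex. The plumbing formula then gives $d(M_j,\spincs_0)=-j/2-1$ for $j$ odd and $-j/2$ for $j$ even, and inverting Ni--Wu's integer-surgery formula $d(M_j,\spincs_0)=\tfrac{j}{2}-2V_0(J_j)$ yields $V_0(J_j)=\lceil j/2\rceil$. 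Some concrete computation of this kind (or an explicit determination of $\CFKi(S^3,J_j)$, or a proof that $J_j$ is thin with $\tau(J_j)=j$) is indispensable; as written, the heart of the theorem remains unproven.
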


For each integer $n$, note that $(Y_j)_{1/n}(K_j) = S^3_{1/(n-1)}(J_j)$. (Compare Example \ref{ex:T23}.) In particular, $(Y_j)_1(K_j) = S^3$, so $d((Y_j)_1(K_j)) = 0$, while for $n>1$, Theorem \ref{thm:signs} gives
\[
d((Y_j)_{1/n}(K_j)) = d(S^3_{1/(n-1)}(J_j)) = -2V_0(J_j).
\]
Thus, Theorem \ref{thm:theta(YjKj)} will follow immediately from the following statement:

\begin{proposition} \label{prop:V0(Jj)}
For each $j \ge 1$, we have
\[
V_0(J_j)
%= \begin{cases}
%\frac{j+1}{2} & j \text{ odd} \\
%\frac{j}{2} & j \text{ even}.
%\end{cases}
= \left\lceil \frac{j}{2} \right\rceil.
\]
\end{proposition}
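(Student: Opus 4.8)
The plan is to compute $V_0(J_j)$ by identifying the knot Floer complex of $J_j$ explicitly. Since $J_j$ is obtained from a two-component two-bridge link $L_j$ by $+1$-surgery on one component $L_j^1$ (which is unknotted) followed by blow-down, the exterior of $J_j$ is the same as the exterior of $L_j^2$ in the surgered manifold, i.e. $J_j$ is the image of $L_j^2$ after the blow-down. First I would give a more hands-on description of $J_j$: performing $+1$-surgery on the unknotted component $L_j^1$ and blowing down has the effect of adding a full negative twist along the strands of $L_j^2$ passing through a disk bounded by $L_j^1$, so $J_j$ is a twisted (cable or clasp-like) knot with a band passing $\lk(L_j^1,L_j^2)=1$ time — in fact one should recognize $J_j$ from the continued fraction $[-2,\dots,-2,2j+2]$ (with $2j$ copies of $-2$) as a specific family, plausibly a twisted Whitehead-type or doubled knot, or a torus/cable knot. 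Identifying $J_j$ with a named family for which $\tau$, $V_0$, or the full $\CFKi$ is known in the literature would make the computation nearly immediate.

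Absent a clean identification, the robust route is to compute $V_0(J_j)$ directly from the mapping cone / large surgery formula. Since $S^3_n(J_j) = S^3_{1,n+1}(L_j)$, and $(1,n+1)$-surgery on the two-bridge link $L_j$ produces lens spaces or small Seifert fibered spaces whose $d$-invariants are computable by plumbing calculus (Ozsv\'ath--Szab\'o's algorithm for $d$-invariants of plumbed manifolds, since two-bridge surgeries are surgeries on chains of unknots), I would: (1) realize $S^3_N(J_j)$ for large $N$ as a plumbed three-manifold and compute its $d$-invariants in each $\spinc$ structure via the plumbing algorithm; (2) use the large surgery formula $d(S^3_N(J_j),[s]) = d(L(N,1),[s]) - 2V_{|s|}(J_j)$ (valid for $N$ large) to read off $V_0(J_j)$ by comparing with the known $d$-invariants of the lens space $L(N,1)$; alternatively (3) use the relation $d(S^3_1(J_j)) = -2V_0(J_j)$ directly, computing the left side from the plumbing description of $S^3_1(J_j) = S^3_{1,2}(L_j)$. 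The key input making these tractable is that all the relevant surgered manifolds are boundaries of negative-definite (or at least standard) plumbing trees associated to the chain of unknots in the two-bridge presentation, so everything reduces to lattice combinatorics with the intersection form read off from the continued fraction $[-2,\dots,-2,2j+2]$.

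The main obstacle will be the bookkeeping in the plumbing calculation: the continued fraction $[\underbrace{-2,\dots,-2}_{2j},2j+2]$ gives a linear plumbing graph of length $2j+1$, and after incorporating the surgery framings needed to realize $S^3_N(J_j)$ one must either run Ozsv\'ath--Szab\'o's recursive $d$-invariant algorithm or, more cleverly, exploit the structure of $L$-space surgeries. Since $J_j$ is a two-bridge-link-derived knot, its large surgeries are lens spaces (the surgeries on $L_j$ are on a chain of unknots, hence produce $L$-spaces), so $J_j$ is very likely an $L$-space knot; if so, $V_0(J_j)$ is determined entirely by the Alexander polynomial via the torsion-coefficient formula $V_0 = t_0(J_j) = \sum_{i\ge 1} i\, a_i$ where $\Delta_{J_j}(t) = \sum a_i t^i$, and the genus and Alexander polynomial of $J_j$ can be extracted from the Seifert form of a natural Seifert surface (obtained by banding together a Seifert surface for $L_j^2$). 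The final step is then the purely arithmetic verification that this torsion coefficient equals $\lceil j/2 \rceil$; I expect the genus of $J_j$ to grow linearly in $j$ while the Alexander polynomial is sparse enough (being $L$-space-like, with coefficients in $\{0,\pm 1\}$) that $t_0$ telescopes to $\lceil j/2\rceil$. Establishing that $J_j$ is an $L$-space knot (or directly computing its staircase complex) is the one genuinely non-routine step; everything downstream is continued-fraction arithmetic.
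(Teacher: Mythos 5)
Your ``robust route'' is, in outline, the same strategy the paper uses: compute the $d$-invariant of a suitable integer surgery on $J_j$ from a definite plumbing via the algorithm of \cite{OSPlumbed}, then extract $V_0(J_j)$ from Ni--Wu's surgery formula. The gap is that the step you treat as automatic is exactly where all the work lies, and the justification you give for it is false. Surgery on the two-bridge link $L_j$ is \emph{not} surgery on a chain of unknots: the continued fraction $[-2,\dots,-2,2j+2]$ encodes the two-bridge classification of $L_j$ (equivalently, a linear plumbing description of its double branched cover), not a chain-of-unknots surgery presentation of $S^3_{1,n+1}(L_j)$. If $S^3_N(J_j)$ really were surgery on a chain of unknots it would be a lens space, and it is not: Lemma \ref{lem:Mj-seifert} shows that for the particular choice $N=2j+1$ the surgered manifold is a Seifert fibered space with three exceptional fibers, and obtaining that description requires a genuine sequence of blow-ups, handleslides, and blow-downs, after which one must still verify that the resulting plumbing is positive definite with a single bad vertex so that the $d$-invariant algorithm applies. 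Generic surgeries on two-bridge links are hyperbolic, so nothing guarantees that other coefficient choices --- in particular $S^3_1(J_j)=S^3_{1,2}(L_j)$ in your alternative (3) --- bound any plumbing at all; as written, your proposal has no way to get the $d$-invariant computation started. Moreover, even once the plumbing is in hand, the lattice minimization is not pure bookkeeping: for $j$ even one needs an argument (in the paper, a mod $8$ congruence for squares of characteristic covectors restricting to $\spincs_0$) to certify that the exhibited covector realizes the minimum.

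The L-space-knot shortcut suffers from the same false premise: large surgeries on $J_j$ are not lens spaces (again, three exceptional fibers), and no argument is offered that they are L-spaces; the paper neither establishes nor needs that $J_j$ is an L-space knot, and its knot Floer complex appears to be genuinely complicated (Remark \ref{rmk:CFK(Jj)}), so the torsion-coefficient formula $V_0=t_0$ is not available without substantial additional work. One point in your favor: Ni--Wu's integer surgery formula in the self-conjugate $\textrm{Spin}^c$ structure holds for every positive integer coefficient, so no largeness hypothesis (and hence no a priori knowledge of $g(J_j)$, which the paper only computes later in Proposition \ref{prop:tau(Jj)}) is actually needed for the reduction of the proposition to a $d$-invariant computation.
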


Rather than computing $V_0(J_j)$ directly from the complexes $\CFKi(S^3,J_j)$, which are difficult
to determine explicitly, we will compute $V_0(J_j)$ indirectly as follows. Let $M_j =
S^3_{2j+1}(J_j)$, and let $\spincs_0$ denote the unique self-conjugate Spin$^c$ structure on $M_j$.
We first show that $M_j$ is homeomorphic to a certain Seifert fibered space that bounds a
positive-definite plumbing. Using Ozsv\'ath and Szab\'o's algorithm from \cite{OSPlumbed}, we
compute $d(M_j, \spincs_0)$. This computation together with a formula of Ni and Wu \cite[Proposition 1.6]{NiWu} (similar to Theorem \ref{thm:signs} above) will determine $V_0(J_j)$.

\begin{lemma} \label{lem:Mj-seifert}
For each $j \ge 1$, the manifold $M_j = S^3_{2j+1}(J_j)$ is a Seifert fibered space of type $(2;
(2j+1,2j), (2j+1,j+1), (2j+3,j+2))$.\footnote{Many notational conventions for Seifert fibered
spaces exist in the literature; ours follows that of Saveliev \cite[Section
1.1.4]{Saveliev3Spheres}.}
\end{lemma}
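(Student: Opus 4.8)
The plan is to start from the surgery description $M_j = S^3_{2j+1}(J_j) = S^3_{2j+1, 0}(L_j)$ — wait, more precisely, from $S^3_{1, 2j+1}(L_j)$ using the defining relation $S^3_n(J_j) = S^3_{1,n+1}(L_j)$ with $n = 2j+1$, so the surgery coefficients are $1$ on $L_j^1$ and $2j+2$ on $L_j^2$, which is exactly the surgery in Figure \ref{fig:Lj}. First I would recognize $L_j$ as a $2$-bridge link and hence the surgered manifold as a graph manifold: surgery on a $2$-bridge link always yields a Seifert fibered space (when the surgery coefficients are "generic" in the appropriate sense) or more generally a graph manifold, and here the specific continued fraction $[-2,\dots,-2,2j+2]$ together with the symmetry exchanging the two components strongly suggests a small Seifert fibered space over $S^2$ with three exceptional fibers. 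Concretely, I would convert the surgery diagram into a plumbing diagram (a tree of unknots with integer weights) by a sequence of Rolfsen twists and slam-dunks / continued-fraction expansions, and then recognize the resulting star-shaped plumbing tree as a Seifert fibered space.

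The key computational steps: (i) expand the coefficient $2j+2$ on $L_j^2$ as a continued fraction to produce a chain of $2$-handles, and similarly handle the $2j$ left-handed twist regions, turning the linked diagram into a plumbing on a tree; (ii) use the involution swapping $L_j^1 \leftrightarrow L_j^2$ to see the tree is symmetric, which forces it to be star-shaped with a central vertex and three legs, two of which are related by the symmetry; (iii) read off the Seifert invariants from the legs via the standard dictionary: each leg with weights giving a continued fraction $[a_1, \dots, a_k]$ contributes an exceptional fiber $(\alpha, \beta)$ where $\alpha/\beta$ (or its negative, depending on orientation conventions) equals that continued fraction, and the central vertex weight plus corrections gives the Euler number, packaged here as the genus-$2$... no — the "$2$" in "type $(2; \dots)$" is the central weight $e_0 = 2$ in Saveliev's convention, not a genus. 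So I would carefully match Saveliev's convention $(\S 1.1.4)$ for the symbol $(e_0; (\alpha_1,\beta_1), (\alpha_2,\beta_2), (\alpha_3,\beta_3))$ and verify the three pairs come out to $(2j+1, 2j)$, $(2j+1, j+1)$, $(2j+3, j+2)$.

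The main obstacle I expect is bookkeeping the orientations and sign conventions: Rolfsen twists, the passage from surgery coefficients to plumbing weights, and the multiple competing conventions for Seifert invariants (Saveliev's versus Neumann--Raymond versus the plumbing-graph normalization) all introduce sign ambiguities, and getting the $\beta_i$ correct modulo $\alpha_i$ (rather than just up to sign or up to the relation $\beta_i \mapsto \beta_i + \alpha_i$) requires care. As a sanity check I would verify $|H_1(M_j)| = \alpha_1\alpha_2\alpha_3 \cdot |e_0 + \sum \beta_i/\alpha_i|$ against the determinant of $J_j$ computed from the surgery, i.e. that it equals $2j+1$; indeed $(2j+1)^2(2j+3) \cdot |e| $ should collapse to $2j+1$, pinning down $e$ and cross-checking the triple. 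An alternative, perhaps cleaner route would be to identify $J_j$ directly as a known knot — given the twist parameters it is plausibly a twisted torus knot or a cable — and cite an existing Seifert-fibered surgery classification, but I expect the plumbing-calculus computation to be the most self-contained.
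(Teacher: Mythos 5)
Your overall strategy---manipulate the surgery diagram into a star-shaped plumbing or rational surgery picture, then read off the Seifert invariants while minding conventions---is the same as the paper's, and your $|H_1|$ sanity check is sound: with $e_0=2$ one gets $2-\tfrac{2j}{2j+1}-\tfrac{j+1}{2j+1}-\tfrac{j+2}{2j+3}=\tfrac{1}{(2j+1)(2j+3)}$, so $\alpha_1\alpha_2\alpha_3\,|e|=2j+1$, consistent with $|H_1(S^3_{2j+1}(J_j))|$. But there is a genuine gap: the Kirby calculus that constitutes the entire content of the lemma is never carried out, and the toolkit you propose in its place is not adequate as stated. Rolfsen twists, slam dunks, and continued-fraction expansions act on chains of unknots meeting meridionally; the diagram for $S^3_{1,2j+2}(L_j)$ is not of that form, because the two components are clasped together ($j$ clasps, linking number one), and undoing the clasps requires blowups and handle slides, not just coefficient expansions. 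The paper's proof is precisely this work: $j$ positive blowups to undo the clasps, a slide of $L_j^2$ over $L_j^1$, slides among the blowup curves, $2j$ negative blowups to untwist $L_j^2$ and then a blowdown, an inductive sequence of handleslides and cancellations, and one more blowup---only after all of that do slam dunks and Rolfsen twists produce the star-shaped rational surgery diagram from which the invariants $(2;(2j+1,2j),(2j+1,j+1),(2j+3,j+2))$ are read off.

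The two structural shortcuts you lean on to bypass this computation are also unsound. First, it is not true that surgery on a $2$-bridge link is generally Seifert fibered or even a graph manifold: $2$-bridge link exteriors are generically hyperbolic, so most surgeries on them are hyperbolic, and the fact that this particular family is Seifert fibered has to emerge from the computation rather than from a general principle. Second, the involution of $S^3$ exchanging $L_j^1$ and $L_j^2$ does not survive the surgery, since the coefficients $1$ and $2j+2$ are asymmetric; correspondingly, the two multiplicity-$(2j+1)$ exceptional fibers in the answer have different Seifert invariants ($\beta=2j$ versus $\beta=j+1$), so no symmetry argument can force the plumbing tree to be star-shaped or identify two of its legs. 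To complete the proof you would need to exhibit the actual sequence of moves (or an alternative rigorous identification of $J_j$ with a knot whose Seifert fibered surgeries are classified) and track all framings through to the final star-shaped diagram.
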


\begin{figure}
\subfigure[]{
\labellist
 \small
 \pinlabel $j$ at 36 38
 \pinlabel $j+1$ at 106 38
 \pinlabel $2j+2$ [t] at 106 8
 \pinlabel $1$ [b] at 106 68
 \pinlabel $L_j^1$ [r] at 8 58
 \pinlabel $L_j^2$ [r] at 8 18
\endlabellist
\includegraphics{Lj} \label{fig:Lj-surgery}}
$\xrightarrow{\text{slide}}$
\subfigure[]{
\labellist
 \small
 \pinlabel $j+1$ at 86 41
 \pinlabel $2j+2$ [t] at 86 11
 \pinlabel $2j+5$ [b] at 86 71
\endlabellist
\includegraphics{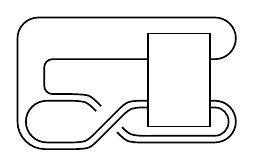} \label{fig:Lj-slide}}
$\xrightarrow{\cong}$
\subfigure[]{
\labellist
 \small
 \pinlabel $j+1$ at 45 45
 \pinlabel $2j+5$ [t] at 31 8
 \pinlabel $2j+2$ at 31 90
\endlabellist
\includegraphics{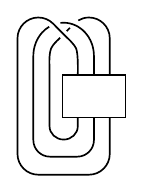} \label{fig:Lj-slide-isotopy}}
$\xrightarrow{\text{blow up}}$
\subfigure[]{
\labellist
 \small
 \pinlabel {$j$} [b] at 31 66
 \pinlabel $-2j+1$ [t] at 94 8
 \pinlabel $-1$ [t] at 64 42
 \pinlabel $-2$ [t] at 47 39
 \pinlabel $-2$ [t] at 29 39
 \pinlabel $-2$ [t] at 12 39
 \pinlabel $j+1$ at 95 89
\endlabellist
\includegraphics{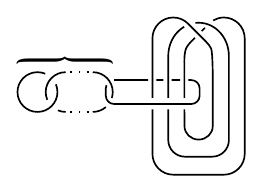} \label{fig:Lj-blowup}}
$\xrightarrow{\cong}$
\subfigure[]{
\labellist
 \small
 \pinlabel $-2j+1$ [tl] at 76 18
 \pinlabel $-1$ [t] at 61 8
 \pinlabel $-2$ [t] at 13 28
 \pinlabel $-2$ [t] at 28 28
 \pinlabel $-2$ [t] at 44 28
 \tiny
 \pinlabel $j+1$ [tl] at 77 35
\endlabellist
\includegraphics{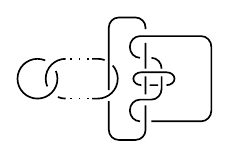} \label{fig:Lj-blowup-isotopy}}
$\xrightarrow{\text{blow up}}$
\subfigure[]{
\labellist
 \small
 \pinlabel $-2$ [b] at 15 66
 \pinlabel $-2$ [b] at 33 66
 \pinlabel $-2$ [b] at 51 66
 \pinlabel $-3$ [b] at 68 66
 \pinlabel $-1$ [b] at 91 66
 \pinlabel $-2j-1$ [b] at 118 65
 \pinlabel $-2$ [r] at 81 34
 \pinlabel $j$ [r] at 81 18
\endlabellist
\includegraphics{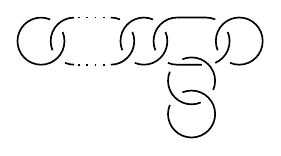} \label{fig:plumbing}}
\caption{Kirby moves showing that $M_j$ bounds a plumbing of spheres.} \label{fig:kirby}
\end{figure}

\begin{figure}
\subfigure[]{
\labellist
\pinlabel $-2j-1$ [bl] at 85 51
\pinlabel $-\frac{2j+1}{j}$ [tl] at 49 12
\pinlabel $-\frac{2j+3}{j+1}$ [br] at 14 51
\pinlabel $-1$ [b] at 49 57
\endlabellist
\includegraphics{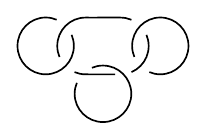}
\label{subfig:seifert1}
}
\hspace{0.5in}
\subfigure[]{
\labellist
\pinlabel $\frac{2j+1}{2j}$ [bl] at 84 51
\pinlabel $\frac{2j+1}{j+1}$ [tl] at 55 12
\pinlabel $\frac{2j+3}{j+2}$ [br] at 14 51
\pinlabel $2$ [b] at 49 57
\endlabellist
\includegraphics{seifert}
\label{subfig:seifert2}
}
\caption{Rational surgery descriptions of $M_j$ as a Seifert manifold.}
\label{fig:seifert}
\end{figure}

\begin{proof}
We begin with the surgery diagram for $M_j$ shown in Figure \ref{fig:Lj-surgery}, with the two 
components
of $L_j$ given framings $1$ and $2j+2$. Sliding $L_j^1$ over $L_j^2$ produces the diagram in Figure \ref{fig:Lj-slide}, which is then isotopic to Figure \ref{fig:Lj-slide-isotopy}. A series of $j+1$ blowups then produces Figure \ref{fig:Lj-blowup}, which is isotopic to Figure \ref{fig:Lj-blowup-isotopy}. Two more blowups produce Figure \ref{fig:plumbing}, which can be recognized as a plumbing of $2$-spheres. (Since $j>0$, this plumbing is indefinite.) By a sequence of slam dunks, we obtain the rational Dehn surgery diagram in Figure \ref{subfig:seifert1}, which represents the Seifert manifold \[M(-1;
(2j+1,-1), (2j+1,-j), (2j+3,-j-1).\] By applying a Rolfsen twist (see, e.g., \cite[Section 5.3]{GompfStipsicz}) to each of the outer components
of the diagram, we see that $M_j$ can also be described as \[M(2; (2j+1,2j), (2j+1,j+1),
(2j+3,j+2))\] (Figure \ref{subfig:seifert2}), as required.
\end{proof}

\begin{lemma}
The $d$-invariant of $M_j$ in the self-conjugate Spin$^c$ structure $\spincs_0$ is given by
\[
d(M_j, \spincs_0) =
\begin{cases}
-\frac{j}{2} -1 & j \text{ odd} \\
-\frac{j}{2} & j \text{ even}.
\end{cases}
\]
\end{lemma}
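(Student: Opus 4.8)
The plan is to pin down $M_j$ well enough to run the algorithm of Ozsv\'ath and Szab\'o for plumbed three-manifolds \cite{OSPlumbed}. Since $M_j$ bounds a positive-definite plumbing, $-M_j$ bounds a negative-definite one; concretely, one continues the Kirby calculus of Figure~\ref{subfig:plumbing2} (or, equivalently, feeds the Seifert data of Lemma~\ref{lem:Mj-seifert} into the standard star-shaped plumbing construction, taking negative continued-fraction expansions of the Seifert invariants) to produce a negative-definite plumbing tree $G_j$, with associated $4$-manifold $X(G_j)$ and $\partial X(G_j)\cong -M_j$, having a single central vertex and three linear legs. First I would verify that $G_j$ is negative-definite (equivalently, that the Seifert Euler number has the correct sign) and has at most one bad vertex in the sense of \cite{OSPlumbed}, so that the algorithm applies. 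Its main theorem then computes $\HFp(M_j)$; reading off the bottom of the infinite tower in the $\spincs_0$-summand gives
\[
d(M_j,\spincs_0)\;=\;-\max_{K}\,\frac{K^2+\abs{G_j}}{4},
\]
where $\abs{G_j}$ is the number of vertices and $K$ ranges over the characteristic covectors of the intersection form of $X(G_j)$ that restrict to $\spincs_0$ on $-M_j$ (here we use that $\spincs_0$ is self-conjugate, so $d(M_j,\spincs_0)=-d(-M_j,\spincs_0)$).

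The next step is to identify which characteristic covectors represent $\spincs_0$. Since $\abs{H_1(M_j)}=2j+1$ is odd, $\spincs_0$ is the unique self-conjugate Spin$^c$ structure, hence corresponds to the conjugation-invariant class of characteristic covectors; I would write down an explicit representative $K_0$ adapted to the star-shaped shape of $G_j$. With this in hand, the evaluation of $\max_K(K^2+\abs{G_j})/4$ becomes a finite lattice optimization, which I would perform using the equivalence moves of \cite{OSPlumbed} --- starting from an admissible characteristic vector and repeatedly applying $K\mapsto K\pm 2\,\PD[v]$ to pass among extremal representatives --- or, equivalently, by exhibiting a candidate extremal $K$ and checking its maximality directly from negative-definiteness of the form. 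Because the answer is piecewise linear in $j$ with a jump governed by the parity of $j$, I expect to organize this as an induction on $j$ in steps of two (the inductive step lengthens two of the legs by a $(-2)$-vertex and changes $K^2$ by a controlled amount), or to solve the optimization uniformly and read off the two cases; the parity of $j$ enters precisely through the class of $\spincs_0$ and the even/odd type of the lattice of $X(G_j)$.

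The hard part will be this final lattice computation. The three legs of $G_j$ interact through the central vertex, so $K$ cannot be optimized one leg at a time, and proving that a proposed vector is genuinely extremal (not merely admissible) requires careful bookkeeping with the negative-definite form; this is exactly where the case distinction $j$ even versus $j$ odd --- responsible for the $\lceil j/2\rceil$ appearing downstream --- arises. As a consistency check, the value of $d(M_j,\spincs_0)$ obtained this way should feed through the Ni--Wu surgery formula \cite[Proposition~1.6]{NiWu} to yield $V_0(J_j)=\lceil j/2\rceil$, in agreement with Proposition~\ref{prop:V0(Jj)}.
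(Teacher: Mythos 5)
Your overall framework is the same as the paper's: exhibit $M_j$ (up to orientation) as the boundary of a definite star-shaped plumbing with one bad vertex and feed it into the algorithm of \cite{OSPlumbed}; the paper works directly with the positive-definite graph $\Gamma_j$ of Figure \ref{fig:plumbing-graph} (coming from the continued-fraction expansions of the Seifert data in Lemma \ref{lem:Mj-seifert} and Figure \ref{subfig:seifert2}) and computes $d(M_j,\spincs_0)=\min_{\alpha}\frac{\alpha^2-2j-5}{4}$, which is exactly your formula after orientation reversal. So there is no divergence of method to discuss.

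The problem is that your proposal stops exactly where the proof actually lives. Everything after ``the evaluation of $\max_K(K^2+\abs{G_j})/4$ becomes a finite lattice optimization'' is a plan, not an argument: you neither exhibit the extremal characteristic covectors nor certify their extremality, and the parity dichotomy in the answer cannot be read off from generalities about ``the even/odd type of the lattice.'' Concretely, the paper must do two things that are absent from your sketch. First, to know that a candidate covector $\alpha_0$ represents $\spincs_0$ one needs $\alpha_0=A_j\x$ for an integral $\x$; the paper produces explicit integral solutions of this $(2j+5)\times(2j+5)$ system for each parity of $j$ (these solutions also give $\alpha_0^2=\alpha_0^T\x$, namely $1$ for $j$ odd and $5$ for $j$ even). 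Second, and more seriously, one must prove the candidate is genuinely extremal. For $j$ odd this is immediate ($\alpha^2$ is a positive integer and $\alpha_0^2=1$), but for $j$ even the paper needs a nontrivial congruence argument: rewriting $\x^TA_j\x$ as a signed sum of squares and using $x_i\equiv i\pmod 2$ to show that \emph{every} $\alpha\in\Char(V_j,\spincs_0)$ satisfies $\alpha^2\equiv 5\pmod 8$, hence $\alpha^2\ge 5$. Your suggested alternatives --- the $K\mapsto K\pm2\PD[v]$ moves, or an induction lengthening two legs by $(-2)$-vertices with ``$K^2$ changing by a controlled amount'' --- are not carried out, and the inductive step is not obviously controlled, since the two leg framings $j+1$ and $j+2$ also change with $j$, so the lattice does not simply grow by stabilization. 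Finally, note that your proposed consistency check via \cite[Proposition~1.6]{NiWu} and $V_0(J_j)=\lceil j/2\rceil$ is circular as evidence: in the paper that value of $V_0$ is \emph{deduced} from this lemma, not known independently.
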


\begin{figure}
\labellist
\pinlabel $j+2$ [b] at 11 81
\pinlabel $2$ [b] at 44 83
\pinlabel $2$ [b] at 76 83
\pinlabel $2$ [b] at 109 83
\pinlabel $2$ [b] at 173 83
\pinlabel $2$ [b] at 205 83
\pinlabel $2$ [r] at 72 47
\pinlabel $j+1$ [r] at 72 11
\pinlabel $v_1$ [t] at 11 76
\pinlabel $v_2$ [t] at 44 76
\pinlabel $v_3$ [tl] at 80 75
\pinlabel $v_6$ [t] at 109 76
\pinlabel $v_{2j+4}$ [t] at 173 76
\pinlabel $v_{2j+5}$ [t] at 205 76
\pinlabel $v_4$ [l] at 80 47
\pinlabel $v_5$ [l] at 80 11
\endlabellist
\includegraphics{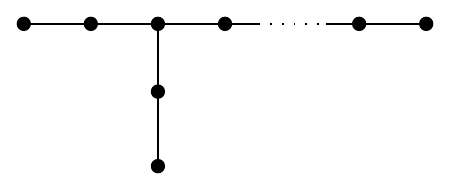}
\caption{The labeled graph $\Gamma_j$.}
\label{fig:plumbing-graph}
\end{figure}

\begin{proof}
Let $\Gamma_j$ be the labeled graph in Figure \ref{fig:plumbing-graph}. This graph has $2j+5$ vertices, which we label $v_1, \dots, v_{2j+5}$. The labels of the vertices are given by $m(v_1) = j+2$, $m(v_5) = j+1$, and $m(v_i) = 2$ for all other $i$. Let $X_j$ denote the plumbing of $2$-spheres specified by $\Gamma_j$. Using the rational surgery picture in Figure \ref{subfig:seifert2} together with the continued fraction expansions $\frac{2j+1}{j+1} = [2,j+1]$, $\frac{2j+3}{j+2} = [2,j+2]$, and $\frac{2j+1}{2j} = [\underbrace{2, \dots, 2}_{2j}]$, we see that $\partial X_j = M_j$. Let $A_j$ be the symmetric matrix associated to $\Gamma_j$, which presents the intersection form of $X_j$.  By \cite[Theorem 5.2]{NeumannRaymond}, since
\[
2 - \frac{2j}{2j+1} - \frac{j+1}{2j+1} - \frac{j+2}{2j+3} > 0,
\]
$A_j$ is positive-definite. Also, observe that $\Gamma_j$ has only one bad vertex ($v_3$), where bad here means that $m(v)$ is strictly less than the valence of $v$.

We recall a few basic facts from \cite{OSPlumbed} in order to compute the desired $d$-invariant. Let $V_j = (\Z^{2j+5}, A_j)$ be the lattice
associated with the graph $\Gamma_j$, and let $V_j^*$ be the dual lattice. Under the identification
of $V_j^*$ with $H^2(X_j)$, the first Chern classes of Spin$^c$ structures on $X_j$ correspond to
the set $\Char(V_j)$ of characteristic covectors in $V_j^*$, i.e., linear functions $\alpha \co V \to
\Z$ with the property that $\alpha(v_i) \equiv m(v_i) \pmod 2$. Identifying $V_j^*$ with
$\Z^{2j+5}$, the (rational) square of a covector $\alpha$ is given by $\alpha^2 = \alpha^T A_j^{-1} \alpha$.
Additionally, two covectors $\alpha, \alpha'$ restrict to the same Spin$^c$ structure on $M_j$ iff $(\alpha -
\alpha')/2 = A_j \x$ for some $\x \in \Z^n$; we denote these equivalence classes by $\Char(V_j,
\spincs)$ for $\spincs \in \Spin^c(M_j)$. In particular, $\alpha$ restricts to $\spincs_0$ iff $\alpha = A_j \x$ for some $\x \in \Z^n$; this in turn implies that
\[
\alpha^2 = \alpha^T \x  = \x^T A_j \x.
\]
The main theorem of \cite{OSPlumbed} then says that for each Spin$^c$ structure $\spincs$ on $M_j$, we have
\[
d(M_j, \spincs) = \min_{\alpha \in \Char(X_j, \spincs)} \frac{\alpha^2 - b_2(X_j)}{4} = \min_{\alpha \in \Char(X_j, \spincs)} \frac{\alpha^2 - 2j -5}{4}.
\]
(The results of \cite{OSPlumbed} are stated for negative-definite plumbings; the version stated here follows from orientation reversal.)

When $j$ is odd, the covector $\alpha_0 = (1,0,\dots, 0)$ (i.e., $\alpha_0(v_1) = 1$ and $\alpha_0(v_i) = 0$ for $i > 1$) is characteristic and restricts to $\spincs_0$. To see the latter statement, the equation $A \x = \alpha_0$ can be written as
\begin{align*}
(j+2)x_1 + x_2 &= 1  & x_4 + (j+1)x_5 &= 0 \\
x_1 + 2x_2 + x_3 &= 0 & x_3 + 2x_6 + x_7 &= 0 \\
x_2 + 2x_3 + x_4 + x_6 &= 0 & x_i + 2x_{i+1} + x_{i+2} &= 0 \quad \text{for } i=6, \dots, 2j+3 \\
x_3 + 2x_4 + x_5 &= 0 & x_{2j+4} + 2x_{2j+5} &= 0
\end{align*}
which has the integral solution
\begin{align*}
x_1 &= 1 &  x_4 &= -j-1 \\
%x_2 = 1 - (j+2) = -j-1
x_2 &= -j-1 & x_5 &= 1 \\
%1 + 2(-j-1) + x_3 = 0
x_3 &= 2j + 1 & x_i &= (-1)^{i+1} (2j+6-i) \quad \text{for } i=6, \dots, 2j+5.
%x_{2j+6-i} &= (-1)^{i+1} i \\
%x_{2j+4} &= 2 \\
%x_{2j+5} &= -1
\end{align*}
Hence, we see that $\alpha_0^T A^{-1} \alpha_0 = 1$. For any other $\alpha \in \Char(V, \spincs_0)$, ${\alpha}^2$ must also be a positive integer, so $\alpha_0$ has minimal square. Thus, we deduce that
\[
d(M_j, \spincs_0) = \frac{1-2j-5}{4}
= -\frac{j}{2} - 1.
\]

Similarly, when $j$ is even, let $\alpha_0$ be the covector with $\alpha_0(v_5)=-1$, $\alpha_0(v_{2j+5})=2$, and $\alpha_0(v_i)=0$ for all other $i$. The equations are:
\begin{equation} \label{eq:solve-for-x-j-even}
\begin{aligned}
(j+2)x_1 + x_2 &= 0  & x_4 + (j+1)x_5 &= -1 \\
x_1 + 2x_2 + x_3 &= 0 & x_3 + 2x_6 + x_7 &= 0 \\
x_2 + 2x_3 + x_4 + x_6 &= 0 & x_i + 2x_{i+1} + x_{i+2} &= 0 \quad \text{for } i=6, \dots, 2j+3 \\
x_3 + 2x_4 + x_5 &= 0 & x_{2j+4} + 2x_{2j+5} &= 2
\end{aligned}
\end{equation}
The solution we get is:
\begin{align*}
x_1 &= 1 & x_4 &= -(j+2) \\
x_2 &= -(j+2) & x_5 &= 1 \\
x_3 &= 2j+3 & x_i &= (-1)^{i+1} (2j+8-i) \quad \text{for } i=6, \dots, 2j+5.
\end{align*}
Hence, $\alpha_0^2 = -1 \cdot 1 + 2 \cdot 3 = 5$. To check that $\alpha_0$ has minimal square, we claim that if $\alpha \in \Char(V, \spincs_0)$, then $\alpha^2 \equiv 5 \pmod 8$, and hence $\alpha^2 \ge 5$. To see this, suppose that $\alpha = A_j \x$ for $\x \in \Z^{2j+5}$. The equations \eqref{eq:solve-for-x-j-even} hold mod 2, which implies that $x_i \equiv i \pmod 2$ for $i=1, \dots, 2j+5$. We compute:
\begin{align*}
\alpha^2 &= \x^T A_j \x \\
&= (j+2) x_1^2 + (j+1)x_5^2 + 2 \sum_{i \ne 1, 5} x_i^2 + 2 \sum_{i \ne 5, 2j+5}  x_i x_{i+1} + 2x_3x_6 \\
&= (j+1) x_1^2 -x_3^2 + j x_5^2 + x_{2j+5}^2 + \sum_{i \ne 5, 2j+5} (x_i + x_{i+1})^2 + (x_3+x_6)^2 .
\end{align*}
(Here, the sums are taken over all $i=1, \dots, 2j+5$ except for the specified values.) Since any odd square is congruent to $1$ mod $8$, we see that
\begin{align*}
\alpha^2 &\equiv (j+1) -1 + j + 1 + 2j+4 \equiv 4j+5 \equiv 5 \pmod 8,
\end{align*}
as required. It thus follows that
\[
d(M_j, \spincs_0) = \frac{5-2j-5}{4} =  -\frac{j}{2}. \qedhere
\]
\end{proof}

\begin{proof}[Proof of Proposition \ref{prop:V0(Jj)}]
Ni and Wu's formula for $d$ invariants of integer surgeries \cite[Proposition 1.6]{NiWu} implies that
\[
d(M_j, \spincs_0) = \frac{j}{2} - 2V_0(J_j).
\]
Combining this with the results above, we see that
\[
%-\frac{j}{2} - 1 = \frac{j}{2} - 2V_0(J_j)
%2V_0(K)  - 1 = j
V_0(J_j) =
\begin{cases}
\frac{j+1}{2} & j \text{ odd} \\
\frac{j}{2} & j \text{ even}
\end{cases}
\]
as required.
\end{proof}

We conclude this section by mentioning a few other properties of the knots $J_j \subset S^3$, which may be of interest in other contexts.

\begin{proposition} \label{prop:tau(Jj)}
For each $j \ge 1$, we have $\tau(J_j) = g_4(J_j) = g(J_j) = j$.
\end{proposition}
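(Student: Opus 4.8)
The plan is to combine the universal inequalities $\tau(J_j) \le g_4(J_j) \le g(J_j)$ with matching bounds in the two outer terms. Thus it suffices to establish (i) $g(J_j) \le j$ and (ii) $\tau(J_j) \ge j$; together with the inequalities these force $\tau(J_j) = g_4(J_j) = g(J_j) = j$.

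For (i) I would produce a genus-$j$ Seifert surface for $J_j$ directly from the surgery picture. Recall that $J_j$ is the image of the component $L_j^2$ of the $2$-bridge link $L_j = [\underbrace{-2,\dots,-2}_{2j},\,2j+2]$ after $+1$-surgery on the unknotted component $L_j^1$ followed by a blowdown. Beginning with the standard plumbing-of-bands Seifert surface associated with the continued fraction for $L_j$, I would track how this surface changes when the disk bounded by $L_j^1$ is used to insert a full twist into the strands of $L_j^2$ passing through it; the continued-fraction bookkeeping shows the resulting surface has first Betti number $2j$, giving $g(J_j) \le j$. (Equivalently, one computes $\Delta_{J_j}(t)$ from the $2$-bridge data and the surgery/Torres formula and checks $\deg \Delta_{J_j} = j$; this also supplies the needed genus lower bound.)

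For (ii) I would argue that $J_j$ is an L-space knot and invoke the fact that for such a knot $\tau = g_4 = g = \deg\Delta$. By Lemma \ref{lem:Mj-seifert}, $M_j = S^3_{2j+1}(J_j)$ is a Seifert fibered rational homology sphere over $S^2$ with three exceptional fibers, bounding the positive-definite plumbing $X_j$ whose graph $\Gamma_j$ has a single bad vertex. One then checks that $M_j$ is an L-space --- e.g.\ by computing $\HFp_{\mathrm{red}}(M_j)$ from the Ozsv\'ath--Szab\'o plumbing algorithm, exactly as in the $d$-invariant computation above, and seeing that it vanishes, or by appealing to the classification of Seifert fibered L-spaces in terms of their Seifert invariants. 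A positive L-space surgery makes $J_j$ an L-space knot, so $\tau(J_j) = g(J_j) = \deg\Delta_{J_j} = j$, and with (i) we conclude $\tau(J_j) = g_4(J_j) = g(J_j) = j$. The main obstacle is this verification that $M_j$ is an L-space from its Seifert/plumbing data; the requisite handle-calculus input is precisely what was assembled in the proof of Lemma \ref{lem:Mj-seifert}, and the remaining genus and Alexander-polynomial bookkeeping is routine.
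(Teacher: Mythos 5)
Your step (ii) contains a genuine gap: the claim that $M_j=S^3_{2j+1}(J_j)$ is an L-space (equivalently, that $J_j$ is an L-space knot) is false, already for $j=1$. Indeed, if $S^3_3(J_1)$ were an L-space, then $J_1$ would be an L-space knot, hence fibered with $\tau=g$; combined with your own bound $g(J_1)\le 1$ from step (i), $J_1$ would have to be the unknot or the right-handed trefoil, so $M_1$ would be $L(3,1)$ or $S^3_3(T_{2,3})$, each Seifert fibered with spherical base orbifold and finite fundamental group. But Lemma \ref{lem:Mj-seifert} identifies $M_1$ as a Seifert fibered space with exceptional fibers of orders $3,3,5$, whose base orbifold $S^2(3,3,5)$ is hyperbolic and whose fundamental group is infinite, a contradiction. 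So the verification you propose (running the Ozsv\'ath--Szab\'o plumbing algorithm and ``seeing that $\HFred(M_j)$ vanishes'') cannot succeed: the reduced homology does not vanish, and note that the paper's lemma only extracts the single value $d(M_j,\spincs_0)$ by minimizing squares of characteristic covectors, which is a much smaller computation than determining $\HFred$ and carries no L-space information. Without the L-space input, the equality $\tau=g=\deg\Delta$ is unavailable, and the lower bound $\tau(J_j)\ge j$ --- the heart of the proposition --- is unproved.

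The paper gets this lower bound by a different, contact-geometric route (adapting Ray): it exhibits a Legendrian front for $J_j$ with $\operatorname{tb}=2j-1$ and $\operatorname{rot}=0$, and applies Plamenevskaya's inequality $\operatorname{tb}+\lvert\operatorname{rot}\rvert\le 2\tau-1$ to conclude $\tau(J_j)\ge j$; that is the replacement your argument needs. Two smaller points on step (i): the plumbing-of-bands surface for the $2$-bridge link has both components of $L_j$ in its boundary, and since $\lk(L_j^1,L_j^2)=1$ no Seifert surface for $L_j^2$ alone can avoid $L_j^1$, so one cannot simply carry a surface through the blowdown twist; the paper instead builds the genus-$j$ surface by tubing a disk bounded by the relevant component inside the surgered picture of Figure \ref{fig:Lj-blowup-slide}. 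Also, your parenthetical is backwards: $\deg\Delta_{J_j}$ bounds the genus from \emph{below}, not above (and no genus lower bound is needed once $\tau(J_j)\ge j$ is in hand).
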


(Here $g$ denotes the Seifert genus and $g_4$ denotes the $4$-ball genus.)

\begin{figure}
\subfigure[]{
\labellist
\pinlabel $L_j^2$ [r] at 8 98
\pinlabel $L_j^1$ [tr] at 78 12
\endlabellist
\includegraphics{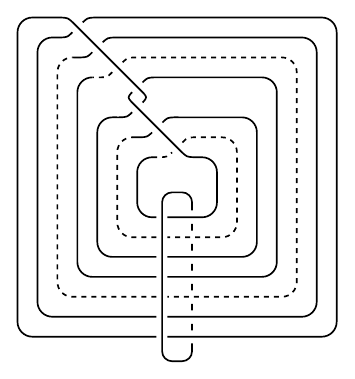}
\label{subfig:Lj-alternate}
}
\qquad
\subfigure[]{
\includegraphics{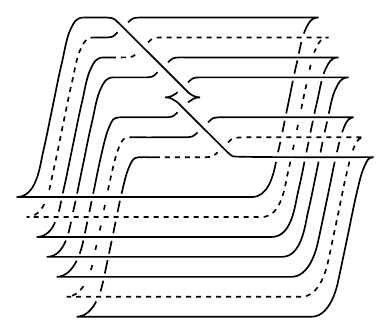}
\label{subfig:Jj-legendrian}
}
\caption{(a) Alternate picture of $L_j$. The dashed portion indicates winding multiple times. The curve $L_j^2$ passes through the disk spanned by $L_j^1$ $2j+1$ times, $j+1$ times in one direction and $j$ in the other. (b) Legendrian representative for $J_j$, obtained by blowing down $L_j^1$ with framing $+1$ from the previous figure.} \label{fig:Lj-alternate}
\end{figure}

%\begin{figure}
%\includegraphics{Jj-legendrian}
%\caption{Front projection for a Legendrian representative of $J_j$.}
%\label{fig:Jj-legendrian}
%\end{figure}

\begin{proof}
First, we claim that $J_j$ has a genus-$j$ Seifert surface. To see this, orient the link $L_j$ so that the linking number of the two components is $-1$. In Figure \ref{subfig:Lj}, this means that the two strands passing through the box labeled $j$ have parallel orientation. Applying Seifert's algorithm to the diagram in Figure \ref{subfig:Lj} produces a Seifert surface $S$ for $L_j$ built out of $2j+3$ disks and $4j+3$ bands, which thus has genus equal to $j$. This surface induces the $+1$ framing on each component of $L_j$, so when we perform $+1$ surgery on $L_j^1$, we can cap off one boundary component of $S$ in the surgery to give a genus-$j$ Seifert surface for $J_j$. It follows  $\tau(J_j) \le g_4(J_j) \le g(J_j) \le j$.

To see the reverse inequalities, we adapt a contact-geometric argument given by Ray \cite{RayIterates}. The link $L_j$ can also be depicted as in Figure \ref{subfig:Lj-alternate}. Viewing the exterior of $L_j^1$ as a standard solid torus, $L_j^2$ is the same as the pattern knot $Q_j$ from \cite[Figure 9]{RayIterates}. Blowing down $L_j^1$ (with $+1$ framing) inserts a full negative twist in the $2j+1$ strands of $L_j^2$ that pass through it, producing $J_j$. This knot has a Legendrian representative $\mathcal{J}_j$ given by the front projection in Figure \ref{subfig:Jj-legendrian}, from which it is easy to compute that $\operatorname{tb}(\mathcal{J}_j) = 2j-1$ and $\operatorname{rot}(\mathcal{J}_j)=0$.
%, which has a Legendrian representative with $\operatorname{tb}(Q_j) = 2j$ and $\operatorname{rot}(Q_j) = 0$. The knot $J_j$ is obtained by taking the image of $Q_j$ under the embedding the solid torus into $S^3$ as a $-1$-framed tubular neighborhood of the unknot $U$. Since this is the Thurston--Bennequin framing of a standard Legendrian representative, we can obtain a Legendrian representative for $J_j$ via Ng's Legendrian satellite construction \cite{NgSatellite}. \todo{Or would it be easier to just draw a Legendrian picture of $J_j$?} Applying \cite[Remark 2.4]{NgSatellite}, we compute that:
%\begin{align*}
%\operatorname{tb}(J_j) &= \operatorname{tb}(U) + \operatorname{tb}(Q_j) = 2j-1 \\
%\operatorname{rot}(J_j) &= \operatorname{rot}(U) + \operatorname{rot}(Q_j) = 0.
%\end{align*}
By Plamenevskaya's inequality \cite{PlamenevskayaBounds}, we deduce that $2j-1 \le 2\tau(J_j)-1$, hence $j \le \tau(J_j)$, as required.
\end{proof}

\begin{remark} \label{rmk:CFK(Jj)}
Using Figure \ref{fig:Lj-alternate}, it is not hard to show that each knot $J_j$ is a $(1,1)$-knot, which implies that the knot Floer complex $\CFKi(S^3,J_j)$ can be computed explicitly from a genus-$1$ Heegaard diagram. However, the number of generators of this complex grows quadratically as a function of $j$, making a general description difficult.
\end{remark}

\section{Concordance invariants from knot Floer homology}
\label{sec:concinv} In this section, we discuss concordance invariants coming from the Heegaard
Floer homology package. The main goal is to show that the basic properties of certain concordance
invariants ($\tau$, $\nu$, $\nu'$, $\varep$, and $\Upsilon$), which were originally only stated for
knots in $S^3$, hold in the more general setting of homology concordance of knots in homology
spheres.
% Since the knot Floer complex is insensitive to orientations, we will suppress orientations
% from knots for the rest of the paper.

We assume that the reader is familiar with the knot Floer complex, defined by Ozsv\'ath and Szab\'o in \cite{OSknots}. We use the notation of \cite[Section 2.2]{Homsurvey}. That is, given a knot $K$ in an integer homology sphere $Y$, we let $C=\CFKi(Y,K)$, which, upon choosing a filtered basis, decomposes as a direct sum $C = \bigoplus_{i,j\in \Z} C(i,j)$, such that
\[\partial(C(i,j)) \subset \bigoplus_{\substack{i' \le i \\ j' \le j}} C(i',j').\]
By \cite[Lemma 4.5]{RasmussenThesis}, we will assume throughout that $C$ is \emph{reduced}, i.e., that every term in the differential strictly lowers either $i$ or $j$.

For any set $X \subset \Z^2$ which is convex with respect to the product partial order on $\Z^2$ (i.e., if $a < b <c$ and $a,c \in X$, then $b\in X$), let $CX = \bigoplus_{(i,j) \in X} C(i,j)$, which is naturally a subquotient complex of $C$.

The key ingredient to extend the various Heegaard Floer concordance invariants for knots in $S^3$ to homology concordance invariants of knots in arbitrary homology spheres will come from a result of Zemke.

\begin{proposition}[{\cite[Theorem A]{Zemkelinkcobord}}]\label{prop:Zemke}
If $(Y_1, K_2)$ and $(Y_2, K_2)$ are homology concordant, then there exist filtered, grading-preserving $\F[U]$-equivariant chain maps
\[ F \co \CFKi(Y_1, K_1) \rightarrow \CFKi(Y_2, K_2) \] and \[ G \co \CFKi(Y_2, K_2) \rightarrow \CFKi(Y_1, K_1) \]
such that $F$ and $G$ induce isomorphisms on homology.
\end{proposition}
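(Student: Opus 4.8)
The plan is to build $F$ and $G$ from Zemke's functorial link cobordism maps in knot Floer homology \cite{Zemkelinkcobord}. Suppose $(Y_1, K_1) \sim (Y_2, K_2)$, witnessed by a homology cobordism $W$ from $Y_1$ to $Y_2$ together with a smoothly embedded annulus $A \subset W$ with $A \cap Y_i = K_i$ for $i = 1, 2$. Decorate $A$ with a pair of disjoint arcs running from $K_1$ to $K_2$, dividing $A$ into two rectangles, one of type $w$ and one of type $z$, and call the resulting decorated cobordism $(W, \mathcal{A})$. Via the standard dictionary between Zemke's link Floer complexes and $\CFKi$, the decorated cobordism $(W, \mathcal{A})$ induces an $\F[U]$-equivariant chain map
\[ F = F_{W, \mathcal{A}} \co \CFKi(Y_1, K_1) \rightarrow \CFKi(Y_2, K_2) \]
which is filtered with respect to the Alexander filtration by the choice of decoration. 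Turning $(W, \mathcal{A})$ upside down (and reversing orientation) yields a decorated homology concordance from $(Y_2, K_2)$ to $(Y_1, K_1)$, and we let $G$ be the associated map.

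Next I would verify that $F$ and $G$ are grading-preserving. Zemke's grading-shift formula expresses the (two) Maslov grading shifts of $F_{W, \mathcal{A}}$ in terms of the topology of $(W, A)$ and the combinatorics of the dividing set: the relevant ingredients are $\chi(W)$, $\sigma(W)$, the quantity $c_1^2$ of a $\spinc$ structure on $W$, the self-intersection and Euler characteristic of $A$, and the number of dividing arcs. Because $W$ is a homology cobordism we have $\chi(W) = \sigma(W) = 0$ and $H^2(W; \Z) = 0$, so $c_1 = 0$ and $[A] = 0$ in $H_2(W, \partial W)$; since $A$ is an annulus, $\chi(A) = 0$ as well. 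With the standard two-arc decoration all of these terms cancel, so both grading shifts vanish, and hence the induced Alexander grading shift vanishes too. The same reasoning applies verbatim to $G$.

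It remains to see that $F$ (and likewise $G$) induces an isomorphism on homology. The key point is that, after forgetting the Alexander filtration, $\CFKi(Y_i, K_i)$ is simply the complex $\CFi(Y_i)$, and under this identification $F$ coincides up to chain homotopy with the Ozsv\'ath--Szab\'o cobordism map $F_W \co \CFi(Y_1) \to \CFi(Y_2)$ of $W$ (with basepoint path a core arc of $A$); this compatibility between the link-cobordism TQFT of \cite{Zemkelinkcobord} and the closed three-manifold invariants is part of that formalism. Since $W$ is a homology cobordism, $F_W$ is an isomorphism on $\HFi$, so $F$ induces an isomorphism $H_*(\CFKi(Y_1, K_1)) \to H_*(\CFKi(Y_2, K_2))$. (Alternatively, one could use Zemke's composition law: $G \circ F$ is the map of the composite concordance, and being an $\F[U]$-equivariant, grading-preserving endomorphism of $H_*(\CFKi(Y_1, K_1)) \cong \HFi(Y_1) \cong \F[U, U^{-1}]$, it is either $0$ or the identity, and one checks it is nonzero.) The only real work is locating the precise statements in \cite{Zemkelinkcobord} that provide the cobordism map, control its bigrading, and identify it with the three-manifold invariant; once those are in hand, the hypotheses that $W$ is a homology cobordism and $A$ is an annulus force every correction term to vanish, so there is no genuine difficulty beyond bookkeeping.
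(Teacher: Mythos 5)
Your proposal is correct and takes essentially the same route as the paper, which offers no independent argument but simply cites Zemke's link-cobordism TQFT: your write-up is exactly an unpacking of that citation (decorated annular concordance map, vanishing of the $\gr_w$, $\gr_z$, and Alexander shifts because $c_1^2=\chi(W)=\sigma(W)=0$ and the surface terms cancel for the two-arc decoration, and identification with the Ozsv\'ath--Szab\'o map on $\CFi$, which is an isomorphism for homology cobordisms). The only soft spot is the parenthetical alternative via the composition law, where "one checks it is nonzero" is the actual crux; your primary argument through $F^\infty_W$ already handles this, so nothing is missing.
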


We now give the definitions of the concordance invariants we are interested in.  For $t \in [0, 2]$, $s \in \R$, and $C=\CFKi(Y, K)$, let
\[ C^t_s(Y, K) = C\left\{ i, j \mid \left (1-\frac{t}{2} \right)  i + \frac{t}{2} j \leq s \right\}. \]
Consider the maps
\begin{align*}
	\iota_s &\co C \{ i=0, j \leq s \} \rightarrow C \{ i = 0 \}, \\
	v_s &\co C \{ \max(i, j-s)=0 \} \rightarrow C \{ i = 0 \}, \\
	v'_s &\co C\{ i = 0\} \rightarrow C\{ \min(i, j-s) = 0 \}, \\
	f^t_s &\co C^t_s(Y, K) \rightarrow C,
\end{align*}
where $\iota_s$ is inclusion, $v_s$ consists of quotienting by $C\{i < 0, j=s\}$ followed by inclusion, $v'_s$ consists of quotienting by $C\{ i=0,  j < s\}$ followed by inclusion, and $f^t_s$ is inclusion. Recall that $C\{i=0\} \simeq \CFh(Y)$ and $C\{i\geq 0\} \simeq \CF^+(Y)$. Also, let $\rho \co \CFh(Y) \rightarrow \CF^+(Y)$ denote inclusion.

\begin{definition} \label{def:taunu}
Let $K$ be a knot in an integer homology sphere $Y$. Define
\begin{align*}
	 \tau(Y,K) &= \min \{ s \mid \Im (\rho_* \circ \iota_{s*}) \cap U^N \HF^+(Y) \neq 0 \ \forall N \gg 0\} \\
	 \nu(Y,K) &= \min \{ s \mid \Im (\rho_* \circ v_{s*}) \cap U^N \HF^+(Y) \neq 0 \ \forall  N \gg 0\} \\
	  \nu'(Y,K) &= \max \{ s \mid v'_{s*}(x)\neq 0 \ \forall x \in \HFh(Y) \textup{ s.t. } \rho_*(x) \neq 0 \textup{ and } \\ & \qquad \qquad \qquad \qquad \qquad \qquad \qquad \qquad \rho_*(x) \in U^N \HF^+(Y) \ \forall N \gg 0  \} \\
	  \Upsilon_{Y,K}(t) &= -2 \min \{ s \mid \Im f^t_{s*} \textup{ contains a non-trivial
element in grading $d(Y)$} \}.
	  	% \nu'(Y,K) &= \max \{ s \mid \exists \ x \in \HFh(Y) \textup{ such that } v'_{s*}(x) \neq 0, \ \rho_*(x) \ne 0, \ \textup{and } \rho_*(x)\in U^N \HF^+(Y) \textup{ for all } N \gg 0  \}.
\end{align*}
\end{definition}

\begin{remark}
When $Y=S^3$, these definitions agree with the definitions of $\tau$, $\nu$, $\nu'$, and $\Upsilon$ in \cite[Section 1]{OS4ball}, \cite[Definition 9.1]{OSrational}, \cite[Definition 3.1]{Homcables}, and \cite[Definition 5.2]{LivingstonUpsilon} respectively. Indeed, when $Y=S^3$, we have $\Im (\rho_* \circ \iota_{s*}) \cap U^N \HF^+(S^3) \neq 0$ for all $N \gg 0$ if and only if $\iota_{s*}$ is surjective. Hence the  definition of $\tau$ above agrees with the usual definition of $\tau$ for knots in $S^3$; similar arguments apply for $\nu, \nu'$, and $\Upsilon$. In the definition of $\Upsilon$, our use of the element of $\HFi(Y)$ in grading $d(Y)$ (as opposed to grading $0$) guarantees that when $K$ is the unknot in $Y$, $\Upsilon_{Y,K} \equiv 0$.
\end{remark}

\begin{remark} \label{rmk:nu'}
Equivalently, we can define $\nu'(Y, K)$ to be the maximum $s$ such that the map induced by the composition
\[ C\{ i \leq 0\} \rightarrow C\{i=0\} \rightarrow C\{\min(i, j-s)=0\}\]
is non-trivial on every maximally graded non-$U$-torsion element in $\HFm(Y) \cong H_*(C\{i \leq 0\})$, where the first map is quotienting by $C\{i<0\}$ and the second is $v'_s$. This definition agrees with Definition \ref{def:taunu} by the arguments in the proof of \cite[Proposition 2.13]{OSS}. From this alternate definition, it follows that $\nu$ and $\nu'$ are dual to one another in the sense that $\nu(-Y, K) = -\nu'(Y, K)$.
\end{remark}

The invariant $\varep$ is defined via the relation between $\tau, \nu$, and $\nu'$. It is straightforward to verify (cf. \cite[Proposition 3.1]{OS4ball} and \cite[Equation 34]{OSrational}) that \[ \nu(Y,K) = \tau(Y,K) \textup{ or } \tau(Y,K)+1 \qquad \textup{ and } \qquad \nu'(Y,K) = \tau(Y,K)-1 \textup{ or } \tau(Y,K). \]
Just as in the case of knots in $S^3$, we have:

\begin{lemma}
Let $K$ be a knot in an integer homology sphere $Y$. The following three cases are exhaustive and mutually exclusive:
\begin{itemize}
	\item $\nu(Y,K)=\tau(Y,K)+1$ and $\nu'(Y,K)=\tau(Y,K)$,
	\item $\nu(Y,K)=\tau(Y,K)$ and $\nu'(Y,K)=\tau(Y,K)-1$,
	\item $\nu(Y,K)=\tau(Y,K)$ and $\nu'(Y,K)=\tau(Y,K)$.
\end{itemize}
\end{lemma}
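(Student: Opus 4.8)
The plan is first to reduce the statement to a single implication, and then to prove that implication by transcribing the corresponding argument for knots in $S^3$. Mutual exclusivity is automatic, since the three displayed pairs $(\nu(Y,K),\nu'(Y,K))$ are pairwise distinct. For exhaustiveness, recall that we have already recorded $\nu(Y,K)\in\{\tau(Y,K),\tau(Y,K)+1\}$ and $\nu'(Y,K)\in\{\tau(Y,K)-1,\tau(Y,K)\}$, so the only combination of these not appearing on the list is $(\nu(Y,K),\nu'(Y,K))=(\tau(Y,K)+1,\tau(Y,K)-1)$. Hence it suffices to prove
\[
\nu(Y,K)=\tau(Y,K)+1\ \Longrightarrow\ \nu'(Y,K)=\tau(Y,K):
\]
indeed, if $\nu(Y,K)=\tau(Y,K)$ the forbidden pair cannot occur, and if $\nu(Y,K)=\tau(Y,K)+1$ the implication forces $\nu'(Y,K)=\tau(Y,K)$, again ruling it out. (This also yields the companion implication $\nu'(Y,K)=\tau(Y,K)-1\Rightarrow\nu(Y,K)=\tau(Y,K)$, though we do not need it.)

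To prove the implication I would reproduce the chain-level argument used for knots in $S^3$ to show $\varep(S^3,K)\in\{-1,0,1\}$ (cf.\ \cite[Section 3]{OS4ball} and \cite[Proposition 3.1]{Homcables}). Write $C=\CFKi(Y,K)$, reduced, set $\sigma=\tau(Y,K)$, and let $\mathcal T^+:=\bigcap_{N\ge 0}U^N\HFp(Y)=\Im(\HFi(Y)\to\HFp(Y))$ be the tower in $\HFp(Y)$. Unwinding Definition \ref{def:taunu}, the equality $\nu(Y,K)=\sigma+1$ (which forces $\tau(Y,K)=\sigma$, since $\nu\ge\tau$) says that the inclusion-induced map $H_*(C\{i=0,\ j\le\sigma\})\to\HFp(Y)$ sends some class to a nonzero element of $\mathcal T^+$, but that no class in the image of the quotient map $q_*\colon H_*(C\{\max(i,j-\sigma)=0\})\to H_*(C\{i=0,\ j\le\sigma\})$ does so. Feeding the short exact sequence
\[
0\longrightarrow C\{i<0,\ j=\sigma\}\longrightarrow C\{\max(i,j-\sigma)=0\}\longrightarrow C\{i=0,\ j\le\sigma\}\longrightarrow 0
\]
into its long exact sequence, and using that $\Im q_*=\ker\delta$ for the connecting map $\delta\colon H_*(C\{i=0,\ j\le\sigma\})\to H_{*-1}(C\{i<0,\ j=\sigma\})$, we conclude that every class in $H_*(C\{i=0,\ j\le\sigma\})$ whose image in $\HFp(Y)$ is a nonzero element of $\mathcal T^+$ has $\delta\neq 0$. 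One now checks --- by the same basis simplification as in the $S^3$ case, together with the identification $H_*(C\{i\le 0\})\cong\HFm(Y)$ from Remark \ref{rmk:nu'} --- that this forces $v'_{\sigma*}$ to be nonzero on every maximally graded non-$U$-torsion class of $\HFm(Y)$, i.e.\ that the defining condition of $\nu'$ holds at $s=\sigma$. Since $\nu'(Y,K)\le\tau(Y,K)=\sigma$, this gives $\nu'(Y,K)=\sigma$, as required.

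I expect the main --- indeed essentially the only --- difficulty beyond the $S^3$ case to be the bookkeeping caused by $\HFred(Y)$ being possibly nonzero. When $Y=S^3$ one has $\HFh(S^3)\cong\F$, so each map above is either zero or onto and ``mapping to a nonzero tower element'' just means ``nonzero'', whereas for general $Y$ one must track, as classes are pushed through the long exact sequences above, which ones land in the tower $\mathcal T^+\subseteq\HFp(Y)$ --- equivalently, in the image of $\HFi(Y)$ --- and, dually, which are non-$U$-torsion in $\HFm(Y)$; this is exactly the information packaged by the clause ``$\cap\,U^N\HFp(Y)\neq 0$'' in Definition \ref{def:taunu}. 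Because $\HFi(Y)\cong\F[U,U^{-1}]$ (here one uses that $Y$ is a homology sphere) and because $\iota_s$, $v_s$, $v'_s$, $\rho$ and the relevant connecting homomorphisms are all grading-preserving and $\F[U]$-equivariant, the tower is carried along coherently; the cleanest way to make this rigorous is to invert $U$ and compare with the $\HFi$-level picture, where the argument becomes formally identical to the one for knots in $S^3$. When $\HFred(Y)=0$ this difficulty is vacuous.
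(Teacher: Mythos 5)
Your reduction is fine and matches the paper's implicit strategy: since $\nu(Y,K)\in\{\tau,\tau+1\}$ and $\nu'(Y,K)\in\{\tau-1,\tau\}$, the lemma amounts to excluding the pair $(\nu,\nu')=(\tau+1,\tau-1)$, i.e.\ to the single implication you state (equivalently its contrapositive $\nu'=\tau-1\Rightarrow\nu=\tau$, which is the form the paper proves). The problem is that you never actually prove that implication. The short exact sequence $0\to C\{i<0,\,j=\sigma\}\to C\{\max(i,j-\sigma)=0\}\to C\{i=0,\,j\le\sigma\}\to 0$ and its connecting map $\delta$ only repackage the hypothesis $\nu=\sigma+1$ (a statement about $v_\sigma=\iota_\sigma\circ q$); they never involve $v'_\sigma$, its target $C\{\min(i,j-\sigma)=0\}$, or the horizontal complex $C\{j=\sigma\}$, which is where all the work happens. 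The sentence ``one now checks, by the same basis simplification as in the $S^3$ case'' is precisely the content of the lemma, and it is left unproved.

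Moreover, your proposed mechanism for handling nontrivial $\HFred(Y)$ --- invert $U$ and compare with the $\HFi$-level picture, where the argument ``becomes formally identical'' --- would not work. The maps $\iota_s$, $v_s$, $v'_s$ are defined on subquotient complexes such as $C\{i=0\}$ and $C\{\max(i,j-s)=0\}$, which carry no $U^{-1}$-action, and the conditions in Definition \ref{def:taunu} (hitting the tower of $\HFp(Y)$, being non-$U$-torsion in $\HFm(Y)$) are exactly the data destroyed by inverting $U$: $\HFi(Y)\cong\F[U,U^{-1}]$ for every homology sphere, so nothing can be detected at that level. The paper instead runs Hom's chain-level construction directly, and the point is that it uses nothing about the ambient manifold beyond the filtered structure of $C$: given a cycle $x\in\CFh(Y)$ detecting the tower with $v'_\tau(x)=\d y$ in $C\{\min(i,j-\tau)=0\}$, one projects $y$ to $C\{i>0,\,j=\tau\}$, lifts the result to $y'\in C\{j=\tau\}$, forms $z=\d^{\horz}y'$, a cycle in $C\{i\le0,\,j=\tau\}$, and lifts $z$ to a cycle $z'\in C\{\max(i,j-\tau)=0\}$ with $v_{\tau*}[z']=[x]$, whence $\nu=\tau$. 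To complete your proof you would need to carry out this construction (in your contrapositive formulation) or explain concretely how your $\delta\neq0$ statement produces such a class; as written there is a genuine gap.
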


\begin{proof}
The arguments in \cite[Section 3]{Homcables} still apply when the ambient manifold is an arbitrary integer homology sphere. For completeness, we sketch the argument here. Let $\tau=\tau(Y,K)$. Suppose that $\nu'(Y,K)=\tau-1$. Then there exists a cycle $x \in \CFh(Y)$ such that
\begin{enumerate}
	\item $\rho_*([x])$ is a non-zero element of $U^N \HF^+(Y) \neq 0$ for $N \gg 0$,
	\item there exists $y \in C\{ \min(i, j-\tau) = 0 \}$ with $\d y =v'_{\tau} (x)$, where $\d$ denotes the differential on $C\{ \min(i, j-\tau) = 0 \}$.
\end{enumerate}
We may assume that $y$ has non-trivial projection to $C\{ i > 0, j=\tau\}$. Then let $\overline{y}$ be the image of $y$ under projection to $C\{ i > 0, j=\tau\}$. Consider the projection
\[ p \co C\{j=\tau\} \rightarrow C\{ i > 0, j=\tau\}. \]
Choose $y' \in p^{-1}(\overline{y})$. Consider $z=\d^\horz y'$, where $\d^\horz$ denotes the differential on $C\{ j=\tau\}$. Then $z$ is a cycle in $C\{i \leq 0, j=\tau \}$. Consider the projection
\[ q \co C\{ \max(i,j-\tau)=0\} \rightarrow C\{i \leq 0, j=\tau\}.\]
There exists a cycle $z' \in q^{-1}(z)$ such that  $v_{\tau *}([z']) = [x]$, i.e., $\nu(Y,K)=\tau$.
\end{proof}

By the preceding lemma, the following is well-defined.

\begin{definition}
Let $K$ be a knot in an integer homology sphere $Y$. Define
\[	\varep(Y,K) =
		\begin{cases}
			-1 \quad & \text{if } \nu(Y,K)=\tau(Y,K)+1, \\
			1 \quad & \text{if } \nu'(Y,K)=\tau(Y,K)-1, \\
			0 \quad & \text{otherwise}.
		\end{cases}
\]
\end{definition}

Using Proposition~\ref{prop:Zemke}, we can give a uniform proof that all of the invariants defined so far are invariants of homology cobordism, proving a generalization of Theorem \ref{thm:concinv}.  This argument was known to Zemke, but we include it for completeness.
\begin{proposition}\label{prop:tauepsilonhomcob}
If $K_1 \subset Y_1$ and $K_2 \subset Y_2$ are concordant in a homology cobordism between $Y_1$ and $Y_2$, then
\begin{align*}
\tau(Y_1,K_1) &= \tau(Y_2,K_2) \\
\nu(Y_1,K_1) &= \nu(Y_2,K_2) \\
\nu'(Y_1,K_1) &= \nu'(Y_2,K_2) \\
\varep(Y_1,K_1) &= \varep(Y_2,K_2) \\
\Upsilon_{Y_1,K_1} &= \Upsilon_{Y_2,K_2}.
\end{align*}
\end{proposition}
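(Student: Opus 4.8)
The plan is to leverage Zemke's maps from Proposition~\ref{prop:Zemke} and show that each invariant in the list depends only on the filtered $\F[U]$-equivariant chain homotopy type of $\CFKi(Y,K)$ together with the induced map $\rho_*\co \HFh(Y)\to\HFp(Y)$. Since the invariants are defined purely in terms of images (or non-vanishing) of induced maps on homology of subquotient complexes $CX$ for convex $X\subset\Z^2$, the key observation is that a filtered, grading-preserving, $\F[U]$-equivariant chain map $F$ that is a quasi-isomorphism on $\CFKi$ automatically induces, for every convex $X$, a map on $CX$ that fits into a commuting square with the structural maps ($\iota_s$, $v_s$, $v'_s$, $f^t_s$, $\rho$). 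First I would establish this functoriality lemma carefully.

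\begin{proof}
By Proposition~\ref{prop:Zemke}, there are filtered, grading-preserving, $\F[U]$-equivariant chain maps $F\co \CFKi(Y_1,K_1)\to\CFKi(Y_2,K_2)$ and $G$ in the reverse direction, each inducing an isomorphism on homology. Write $C_i=\CFKi(Y_i,K_i)$. Since $F$ is filtered with respect to the $(i,j)$-filtration, for any convex $X\subset\Z^2$ it induces a chain map $F_X\co C_1 X\to C_2 X$, and these are compatible with the structural quotient and inclusion maps in Definition~\ref{def:taunu}: in particular $F$ commutes (up to the identifications $C\{i=0\}\simeq\CFh(Y)$, $C\{i\ge 0\}\simeq\CFp(Y)$, which are themselves induced by $F$ since $F$ is a filtered quasi-isomorphism) with $\iota_s$, $v_s$, $v'_s$, $f^t_s$, and with the inclusion $\rho\co\CFh\to\CFp$. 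Passing to homology, $F_*$ intertwines $\rho_*\circ\iota_{s*}$, $\rho_*\circ v_{s*}$, $v'_{s*}$, and $f^t_{s*}$ on the two sides, and $F_*$ is an isomorphism on $\HFh$, $\HFp$, $\HFm$ commuting with the $U$-action. Therefore, for each $s$, the image $\Im(\rho_*\circ\iota_{s*})$ on one side meets $U^N\HFp(Y_1)$ for all $N\gg 0$ if and only if the corresponding image on the other side meets $U^N\HFp(Y_2)$; this gives $\tau(Y_1,K_1)=\tau(Y_2,K_2)$. The identical argument with $v_s$ gives equality of $\nu$, and with $v'_s$ (using the alternate description in Remark~\ref{rmk:nu'} in terms of maximally graded non-$U$-torsion elements of $\HFm$, which $F_*$ preserves) gives equality of $\nu'$. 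Since $\varep$ is determined by the relations among $\tau,\nu,\nu'$, it too is invariant. Finally, $F_*$ preserves gradings and sends the distinguished element of $\HFi(Y_1)$ in grading $d(Y_1)$ to that of $\HFi(Y_2)$ in grading $d(Y_2)$ (note $d(Y_1)=d(Y_2)$ since $Y_1,Y_2$ are homology cobordant), so $\Im f^t_{s*}$ contains a nonzero element in grading $d$ on one side iff it does on the other; hence $\Upsilon_{Y_1,K_1}=\Upsilon_{Y_2,K_2}$.
\end{proof}

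The main obstacle, and the step I expect to require the most care, is the functoriality lemma: verifying that a filtered quasi-isomorphism of the full complexes really does induce the required isomorphisms on the relevant subquotients and that all the squares commute. The subtlety is that $F$ being a quasi-isomorphism on $\CFKi$ does not a priori force $F_X$ to be a quasi-isomorphism on an arbitrary subquotient $CX$ — but for the specific $X$ appearing here ($\{i=0\}$, $\{i\ge 0\}$, $\{i\le 0\}$, and the half-plane-like regions defining $v_s,v'_s,f^t_s$) one argues via the standard five-lemma/spectral-sequence comparison using that $F,G$ are filtered chain homotopy inverses (the compositions $GF$ and $FG$ are filtered chain homotopic to the identity, so they induce the identity on every $CX$). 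It is worth noting that, alternatively, once $\tau$, $\nu$, $\nu'$, and $\Upsilon$ are each known to depend only on the filtered chain homotopy type together with $\rho_*$, the result is immediate; the extra care is simply in packaging Zemke's conclusion into precisely that statement.
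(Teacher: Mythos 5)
Your overall architecture --- Zemke's filtered maps $F,G$, the induced maps on convex subquotients, the commutative squares with $\iota_s$, $v_s$, $v'_s$, $f^t_s$, and playing $F$ against $G$ to get the two inequalities --- is exactly the paper's argument. But the step you yourself single out as the crux is resolved incorrectly, and the two claims you lean on are false. First, $F_*$ is \emph{not} an isomorphism on $\HFh$, $\HFp$, $\HFm$ in general: homology cobordant manifolds can have non-isomorphic Floer homology. For example $S^3$ and $\Sigma(2,3,13)$ are homology cobordant (the latter bounds a contractible $4$-manifold), and their unknots are homology concordant, yet $\HFh(S^3)\cong\F$ while $\HFh(\Sigma(2,3,13))$ has larger rank because $\HFred(\Sigma(2,3,13))\neq 0$; so no such isomorphism can exist. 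Second, Proposition \ref{prop:Zemke} does not assert that $GF$ and $FG$ are filtered chain homotopic to the identity, and they cannot be: a filtered homotopy equivalence would induce isomorphisms on every convex subquotient, in particular on $C\{i=0\}\simeq\CFh(Y)$, contradicting the same example. So your proposed five-lemma/filtered-homotopy-inverse repair of the functoriality lemma fails, and with it the ``if and only if'' you assert for the images meeting $U^N\HFp$.

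The fix --- and this is what the paper's proof does --- is to observe that no isomorphism on the subquotients is needed. The conditions defining $\tau$, $\nu$, $\nu'$ refer only to nonzero elements of $\HFp(Y)$ lying in $U^N\HFp(Y)$ for all $N\gg 0$ (the tower), and the condition defining $\Upsilon$ refers to a nonzero class of $H_*(C)\cong\HFi(Y)$ in grading $d(Y)$. Such elements are controlled by $H_*(\CFKi)$ alone: a nonzero tower element lifts to $\HFi$ for grading reasons (since $\HFred$ is $U$-torsion), and because $F_*$ is an isomorphism on $H_*(\CFKi)$, is $U$-equivariant, grading-preserving, and filtered, it carries such elements to nonzero tower elements on the other side (one also needs $d(Y_1)=d(Y_2)$, which follows from the known homology cobordism invariance of $d$, or can be extracted directly from the existence of both $F$ and $G$). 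The square with $F$ then gives $\tau(Y_1,K_1)\ge\tau(Y_2,K_2)$, the square with $G$ gives the reverse inequality, and likewise for $\nu$, $\nu'$, and $\Upsilon$; the invariance of $\varep$ follows since it is determined by $\tau$, $\nu$, $\nu'$. Your commutative diagrams are fine; it is only the claimed isomorphisms on $\HFh$, $\HFp$, $\HFm$, and the filtered-homotopy-inverse assertion used to justify them, that must be discarded and replaced by this weaker (and true) input.
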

\begin{proof}
Let $C_i = \CFKi(Y_i, K_i)$. By Proposition \ref{prop:Zemke}, there exist filtered, grading-preserving $\F[U]$-equivariant chain maps
\[ F \co C_1 \rightarrow C_2 \qquad \textup{ and } \qquad G \co C_2 \rightarrow C_1 \]
such that $F$ and $G$ induce isomorphisms on homology. Because these maps are filtered, they induce maps
\[ F \co C_1 X \rightarrow C_2 X \qquad \textup{ and } \qquad G \co C_2 X \rightarrow C_1 X\]
for any subset $X \subset \Z^2$ that is convex with respect to the product partial order on $\Z^2$.

Consider the following commutative diagram:
\[
\begin{CD}
	C_1\{i=0, j \leq s\} @> \rho \iota_s >> C_1\{i \geq 0\} \\
	@VV{F}V			@VV{F}V	  \\
	C_2\{i=0, j \leq s\} @> \rho \iota_s >> C_2\{i \geq 0\}.
\end{CD}
\]
Since $F_*$ commutes with the $U$-action and is an isomorphism on $H_*(C_i)$, it follows that $\tau(Y_1, K_1) \geq \tau(Y_2, K_2)$. By considering the analogous diagram with $G$, we obtain $\tau(Y_1, K_1) \leq \tau(Y_2, K_2)$. Hence $\tau(Y_1, K_1)=\tau(Y_2, K_2)$.

Similarly, the proof that $\nu(Y_1,K_1)=\nu(Y_2,K_2)$ follows from considering the commutative diagram
\[
\begin{CD}
	C_1\{ \max(i, j-s)=0 \} @> \rho v_s >> C_1\{i \geq 0\} \\
	@VV{F}V			@VV{F}V	  \\
	C_2\{ \max(i, j-s)=0 \}@> \rho v_s >> C_2\{i \geq 0\}.
\end{CD}
\]
An analogous diagram for $v'_s$ shows that $\nu'$, and hence $\varep$, are homology concordance invariants.

Finally, the proof that $\Upsilon_{Y_1, K_1} = \Upsilon_{Y_2, K_2}$ follows from considering the commutative diagram
\[
\begin{CD}
	C^t_s(Y_1, K_1)@> f^t_s>> C_1 \\
	@VV{F}V			@VV{F}V	  \\
	C^t_s(Y_2, K_2)@> f^t_s >> C_2,
\end{CD}
\]
and the analogous diagram with $G$.
\end{proof}

\begin{remark}
For completeness, we note that an alternate proof that $\tau(Y_1,K_1) = \tau(Y_2,K_2)$ follows from Raoux's work \cite[Corollary 5.4]{Raouxtau} (see also \cite{OS4ball}).  There it is shown that if $(Y,K)$ bounds $(W,\Sigma)$, where $W$ is a rational homology ball, then $| \tau(Y,K)| \leq g(\Sigma)$.  If $(Y_1, K_1)$ and $(Y_2,K_2)$ are homology concordant, then $Y_1 \# -Y_2$ bounds a rational homology ball in which $K_1 \conn {-K_2}$ bounds an embedded disk. Therefore $\tau(Y_1 \conn {-Y_2}, K_1 \conn {-K_2}) = 0$, which implies that $\tau(Y_1,K_1) = \tau(Y_2,K_2)$ by Proposition~\ref{prop:tauprops} below.
\end{remark}

\begin{remark}
The same arguments apply to prove that the invariants $V_i$ \cite{NiWu}, $\nu^+$ \cite{HomWu4Genus}, and $\nu_n$ \cite{TruongTruncated} can be appropriately generalized to give invariants of homology concordance for knots in arbitrary homology spheres.
\end{remark}

The next three propositions show that $\tau$, $\varep$, and $\Upsilon$ have the same additivity properties for knots in arbitrary homology spheres as they do for knots in $S^3$. (Note that all of these properties are invariants of a pair $(Y,K)$ up to diffeomorphism, so our use of connected sum notation is justified.)

\begin{proposition}[{\cite[Proposition 3.10]{Raouxtau}}]\label{prop:tauprops}
Let $K_1$ and $K_2$ be knots in integer homology spheres $Y_1$ and $Y_2$, respectively. Then
\begin{enumerate}
	\item $\tau(-Y,K) = -\tau(Y,K)$.
	\item $\tau (Y_1 \conn Y_2, K_1 \conn K_2) = \tau(Y_1,K_1) + \tau(Y_2,K_2)$.
\end{enumerate}
\end{proposition}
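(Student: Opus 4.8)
The plan is to follow the template of Ozsv\'ath and Szab\'o's proofs of the corresponding facts for knots in $S^3$ \cite{OS4ball}. Those proofs rest on two inputs---the behavior of $\CFKi$ under reversing the orientation of the ambient manifold, and the K\"unneth formula for $\CFKi$ under connected sum---each of which comes from a local Heegaard-diagrammatic operation (mirroring a doubly-pointed diagram, and taking a connected sum of doubly-pointed diagrams, respectively) and so goes through verbatim for knots in an arbitrary integer homology sphere. Part (1) then follows from orientation-reversal duality, and part (2) from the K\"unneth formula together with part (1).

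For part (1), I would start from the identification $\CFKi(-Y,K)\simeq \CFKi(Y,K)^{\vee}$, where $(\,\cdot\,)^{\vee}$ denotes the $\F[U,U^{-1}]$-dual complex with the $\Z^2$-filtration negated; concretely, if $C=\CFKi(Y,K)=\bigoplus_{i,j}C(i,j)$ then $\CFKi(-Y,K)(i,j)\simeq\operatorname{Hom}_\F(C(-i,-j),\F)$. Under this duality the subcomplex inclusion $\iota_s$ for $-Y$ becomes a quotient projection for $Y$, the inclusion $\rho\co\CFh(-Y)\to\CFp(-Y)$ becomes the dual of the natural map $\HFm(Y)\cong H_*(C\{i\le 0\})\to\HFh(Y)$, and the defining requirement that $\operatorname{im}(\rho_*\circ\iota_{s*})$ meet $U^N\HFp(-Y)$ for all $N\gg0$---equivalently, meet the image of $\HFi(-Y)$ in $\HFp(-Y)$---dualizes to the requirement that the resulting composite be nonzero on the top generator of the free $\F[U]$-summand of $\HFm(Y)$ (cf.\ the $\HFm$-description of $\nu'$ in Remark~\ref{rmk:nu'}). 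Feeding this dictionary into Definition~\ref{def:taunu}---exactly as in the proof of $\tau(\overline K)=-\tau(K)$ in \cite{OS4ball}, and compatibly with the duality $\nu(-Y,K)=-\nu'(Y,K)$---yields $\tau(-Y,K)=-\tau(Y,K)$.

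For part (2), the first step is the filtered K\"unneth isomorphism, an $\F[U,U^{-1}]$-equivariant filtered chain homotopy equivalence
\[
\CFKi(Y_1\conn Y_2,\,K_1\conn K_2)\;\simeq\;\CFKi(Y_1,K_1)\otimes_{\F[U,U^{-1}]}\CFKi(Y_2,K_2),
\]
with the $\Z^2$-filtration on the right the sum of the two filtrations, proved just as for $S^3$. Write $C_i=\CFKi(Y_i,K_i)$ and $\tau_i=\tau(Y_i,K_i)$. For the bound $\tau(Y_1\conn Y_2,K_1\conn K_2)\le\tau_1+\tau_2$ I would choose cycles $x_i\in C_i\{i=0,\ j\le\tau_i\}$ whose images in $\HFp(Y_i)$ equal the bottom generator of the tower; such $x_i$ exist by the definition of $\tau$, since $U^N\HFp(Y_i)$ is that tower for $N\gg0$ while $\operatorname{im}\rho_*$ meets that tower only in its bottom generator. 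Then $x_1\otimes x_2$ is a cycle in $(C_1\otimes C_2)\{i=0,\ j\le\tau_1+\tau_2\}$ lying in grading $d(Y_1)+d(Y_2)=d(Y_1\conn Y_2)$, and one checks that its image in $\HFh(Y_1\conn Y_2)\cong\HFh(Y_1)\otimes\HFh(Y_2)$ and then in $\HFp(Y_1\conn Y_2)$ is the bottom tower generator of $Y_1\conn Y_2$, using the K\"unneth exact sequence for $\HFp$ together with the grading count. For the reverse inequality I would apply this bound to $(-Y_1,K_1)$ and $(-Y_2,K_2)$ and invoke part (1):
\[
-\tau(Y_1\conn Y_2,K_1\conn K_2)=\tau\bigl((-Y_1)\conn(-Y_2),\,K_1\conn K_2\bigr)\le\tau(-Y_1,K_1)+\tau(-Y_2,K_2)=-\tau_1-\tau_2,
\]
and combine the two inequalities to get equality.

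The hard part will be the verification in the second step of part (2): that tensoring the bottom tower generators of $\HFp(Y_1)$ and $\HFp(Y_2)$ yields the bottom tower generator of $\HFp(Y_1\conn Y_2)$---equivalently, that $x_1\otimes x_2$ represents a nonzero class of grading $d(Y_1\conn Y_2)$ in the image of $\HFi(Y_1\conn Y_2)\to\HFp(Y_1\conn Y_2)$. Because $\HFp$ of a connected sum is governed by a K\"unneth exact sequence rather than a plain tensor product, this requires the same grading and torsion bookkeeping that underlies additivity of the $d$-invariant; phrasing Definition~\ref{def:taunu} equivalently in terms of the free $\F[U]$-summand of $\HFm$, where the tower is genuinely non-torsion, makes this cleanest. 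In any case the full statement is \cite[Proposition~3.10]{Raouxtau}, which may simply be cited.
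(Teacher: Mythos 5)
The paper gives no proof of this proposition at all: it is stated as a quotation of \cite[Proposition 3.10]{Raouxtau}, so your closing remark that the statement ``may simply be cited'' is exactly the paper's approach. Your sketch of a direct argument (duality for $\CFKi(-Y,K)$ for part (1), the filtered K\"unneth formula plus the observation that $\im\rho_*=\ker U$ meets the tower only in its bottom generator for part (2)) is a sound outline of the standard Ozsv\'ath--Szab\'o/Raoux proof, and you correctly flag the one step needing real bookkeeping, namely that the tensor product of bottom tower generators represents the bottom tower generator of $\HFp(Y_1\conn Y_2)$ -- which is harmless here since the result is cited rather than reproved.
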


\begin{proposition}\label{prop:epsilonprops}
Let $K_1$ and $K_2$ be knots in integer homology spheres $Y_1$ and $Y_2$, respectively. Then
\begin{enumerate}
	\item \label{it:ep(-Y,K)} $\varep(-Y,K) = -\varep(Y,K)$.
%	\item \label{it:epadd0} If $\varep(Y_1,K_1)=\varep(Y_2,K_2)=0$, then $\varep (Y_1\#Y_2, K_1\#K_2) = 0$.
	\item \label{it:epadd1} If $\varep(Y_1, K_1)=\varep(Y_2, K_2)$, then $\varep(Y_1 \conn Y_2, K_1 \conn K_2) = \varep(Y_1, K_1)$.
	\item \label{it:epadd2} If $\varep(Y_1, K_1)=0$, then $\varep(Y_1 \conn Y_2, K_1 \conn K_2) = \varep(Y_2, K_2)$.
\end{enumerate}
\end{proposition}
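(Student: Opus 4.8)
The plan is to reduce everything to the corresponding statements for knots in $S^3$, which were established by the first author in \cite{Homconcordance, Homcables}, by showing that the only property of $S^3$ used there is the structure of the knot Floer complex as an object in the appropriate filtered chain-homotopy category. The key point is that the invariant $\varep$ is defined purely in terms of the filtered chain complex $C = \CFKi(Y,K)$ together with the $U$-action and the grading, and that the connected-sum formula $\CFKi(Y_1 \conn Y_2, K_1 \conn K_2) \simeq \CFKi(Y_1,K_1) \otimes_{\F[U]} \CFKi(Y_2,K_2)$ holds just as for knots in $S^3$ (this is the Künneth formula for knot Floer homology, valid for knots in arbitrary homology spheres).

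First I would prove item \eqref{it:ep(-Y,K)}. The orientation-reversal formula identifies $\CFKi(-Y,K)$ with the dual complex of $\CFKi(Y,K)$, with the roles of the $i$- and $j$-filtrations interchanged up to reflection. Under this duality, Remark \ref{rmk:nu'} already records that $\nu(-Y,K) = -\nu'(Y,K)$; the analogous computation gives $\nu'(-Y,K) = -\nu(Y,K)$ and $\tau(-Y,K) = -\tau(Y,K)$ (the latter is Proposition \ref{prop:tauprops}(1)). Feeding these three identities into the trichotomy lemma and the definition of $\varep$ then shows directly that $\varep(-Y,K) = -\varep(Y,K)$: the case $\nu = \tau+1$ for $(Y,K)$ becomes the case $\nu' = \tau - 1$ for $(-Y,K)$, and vice versa, while the case $\nu = \nu' = \tau$ is self-dual.

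For items \eqref{it:epadd1} and \eqref{it:epadd2}, I would invoke the local-equivalence / stable-equivalence framework. The sign of $\varep(Y,K)$ depends only on the filtered chain homotopy type of $C\{i=0\}$ together with the data of which classes survive under $\rho_*$ to $U^N\HF^+(Y)$; equivalently, $\varep$ is determined by the "stable equivalence class" of $\CFKi(Y,K)$ in the sense of \cite{Homconcordance}. The arguments of \cite[Section 3]{Homcables} establishing the sign-additivity of $\varep$ under connected sum are purely algebraic: they manipulate the tensor product of two reduced filtered complexes, locate cycles generating the relevant homology, and use the Künneth isomorphism to transport them. None of this uses that the ambient manifold is $S^3$ beyond the fact that $\HFi$ is a single tower $\F[U,U^{-1}]$ (in a single grading), which holds for any integer homology sphere. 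So I would state explicitly that the proofs of \cite[Lemma 3.2, Proposition 3.4, Proposition 3.6]{Homcables} go through verbatim, replacing $\widehat{\CF}(S^3)$ by $\widehat{\CF}(Y)$ and $\HF^+(S^3)$ by $\HF^+(Y)$, and tracking the distinguished tower element in grading $d(Y)$ rather than grading $0$.

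The main obstacle is bookkeeping: one must be careful that the various inclusion/quotient maps $\iota_s, v_s, v'_s$ and their composites with $\rho$ behave correctly under the Künneth isomorphism, and that "the class survives to $U^N \HF^+(Y)$" is the correct condition to track through the tensor product (rather than, say, nonvanishing in $\widehat{\HF}$). For knots in $S^3$ these coincide because $\widehat{\HF}(S^3) = \HF^+(S^3)$ in the relevant grading, but for general $Y$ the reduced part $\HFred(Y)$ could contribute spurious cycles, and one must confirm that those cycles are irrelevant because they do not lie in $U^N\HF^+$. I expect this to require only a short argument isolating the tower, but it is the place where the generalization is not entirely automatic and deserves explicit comment.
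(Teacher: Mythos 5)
Your proposal is correct and follows essentially the same route as the paper: part \eqref{it:ep(-Y,K)} via the $\nu$/$\nu'$ duality of Remark \ref{rmk:nu'}, and parts \eqref{it:epadd1}--\eqref{it:epadd2} by observing that the tensor-product (K\"unneth) arguments of \cite[Section 3]{Homcables} carry over once one tracks the condition of landing in $U^N\HF^+(Y)$ for $N \gg 0$ rather than nonvanishing in $\HFh(Y)$. The paper's proof likewise reduces to those arguments, writing out only the $\varep = 0$ case explicitly with the cycle $x_1 \otimes x_2$ and deferring the rest to \cite[Proposition 3.6]{Homcables}, exactly as you suggest.
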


\begin{proof}
\eqref{it:ep(-Y,K)} This follows from the definition of $\varep$ together with Remark \ref{rmk:nu'}.

\eqref{it:epadd1} We shall only do the case where $\varep(Y_1, K_1)=\varep(Y_2, K_2) = 0$, which is the only case used in this paper; the remaining cases follow similarly. Let $\tau_k=\tau(Y_k, K_k)$ and $C_k=\CFKi(K_k)$ for $k=1,2$. Suppose that $\varep(Y_k,K_k)=0$. Then $\nu(Y_k,K_k)=\tau_k$ and $\nu'(Y_k,K_k)=\tau_k$. Since $\nu(Y_k,K_k)=\tau_k$, there exists a cycle $x_k$ in $C_k\{\max(i, j-\tau_k)=0\}$ such that $\rho_* \circ v_{\tau_k *}([x_k]) \in \Im U^N \subset \HF^+(Y_k)$ for $N \gg 0$ and $\rho_* \circ v_{\tau_k *}([x_k]) \neq 0$. In particular, the image of $x_k$ under the quotient from $C_k\{\max(i, j-\tau_k)=0\}$ to $C_k\{(0, \tau_k)\}$ is non-trivial.

Let $\tau=\tau(Y_1 \conn Y_2, K_1 \conn K_2)$, which equals $\tau_1+\tau_2$ by Proposition~\ref{prop:tauprops}, and $C=\CFKi(Y_1\conn Y_2, K_1 \conn K_2)$, which is isomorphic to $C_1\otimes C_2$ by \cite{OSknots}. Then $x_1 \otimes x_2$ is a cycle in $C\{ \max(i, j-\tau_1-\tau_2)=0\}$ and $\rho_* \circ v_{\tau*}([x_1\otimes x_2]) \in \Im U^n \subset \HF^+(Y_1 \# Y_2)$, i.e., $\nu(Y_1 \conn Y_2, K_1 \conn K_2) = \tau$. A similar argument shows that $\nu'(Y_1 \conn Y_2, K_1 \conn K_2) = \tau$. Thus, $\varep (Y_1\conn Y_2, K_1 \conn K_2) = 0$.

\eqref{it:epadd2} This is left as an exercise to the reader, following \cite[Proposition 3.6]{Homcables}. (The case where $\varep(K_2)=0$ is treated above.)
\end{proof}

\begin{proposition} \label{prop:upsilonprops}
Let $K_1$ and $K_2$ be knots in integer homology spheres $Y_1$ and $Y_2$, respectively. Then for each $t \in [0,2]$,
\begin{enumerate}
	\item $\Upsilon_{-Y,K}(t) = -\Upsilon_{Y,K}(t)$.
	\item $\Upsilon_{Y_1 \conn Y_2, K_1 \conn K_2}(t) = \Upsilon_{Y_1,K_1}(t) + \Upsilon_{Y_2,K_2}(t)$.
\end{enumerate}
\end{proposition}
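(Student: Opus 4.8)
The plan is to treat the two items separately. For the orientation-reversal formula, I would recall that $\CFKi(-Y, K)$ is obtained from $\CFKi(Y, K)$ by dualizing: there is a natural identification $\CFKi(-Y,K) \simeq \CFKi(Y,K)^*$ which swaps the roles of subcomplexes and quotient complexes and negates the gradings, sending $C^t_s$ to (a quotient of) the dual of $C^t_{-s}$ up to a shift, and sending the distinguished element of $\HFi(Y)$ in grading $d(Y)$ to the dual element in grading $d(-Y) = -d(Y)$. Under this duality, the condition that $\Im f^t_{s*}$ contains a nontrivial element in grading $d(Y)$ becomes, for $-Y$, the condition on the dual map at parameter $-s$; chasing through the definition then gives $\Upsilon_{-Y,K}(t) = -\Upsilon_{Y,K}(t)$. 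This is the direct analogue of the $S^3$ computation in \cite{OSS, LivingstonUpsilon}, and the only point to check is that the identification of $C^t_s$ for $-Y$ with the appropriate (co)quotient for $Y$ respects the relevant gradings; I expect this to be routine given Remark \ref{rmk:nu'}, where the analogous duality for $\nu$ and $\nu'$ is already invoked.

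For the additivity under connected sum, the key input is the Künneth formula $\CFKi(Y_1 \conn Y_2, K_1 \conn K_2) \simeq \CFKi(Y_1,K_1) \otimes_{\F[U]} \CFKi(Y_2, K_2)$ from \cite{OSknots}, together with the fact that the filtration function $(i,j) \mapsto (1 - t/2) i + (t/2) j$ is additive on tensor factors. Concretely, if $C = C_1 \otimes C_2$ and we write $\mathcal{F}^t(x_1 \otimes x_2) = \mathcal{F}^t(x_1) + \mathcal{F}^t(x_2)$ for the $t$-filtration level, then $C^t_s(Y_1 \conn Y_2, K_1 \conn K_2) = \sum_{s_1 + s_2 = s} C^t_{s_1}(Y_1,K_1) \otimes C^t_{s_2}(Y_2, K_2)$, and the distinguished element of $\HFi(Y_1 \conn Y_2)$ in grading $d(Y_1 \conn Y_2) = d(Y_1) + d(Y_2)$ is the tensor product of the two distinguished elements. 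I would then run the standard two-sided argument: if $x_k \in C^t_{s_k}(Y_k,K_k)$ maps to a nontrivial class in grading $d(Y_k)$ under $f^t_{s_k}$, then $x_1 \otimes x_2$ witnesses that $\Upsilon_{Y_1 \conn Y_2, K_1 \conn K_2}(t) \ge -2(s_1 + s_2)$, giving one inequality after taking infima; the reverse inequality comes from the fact that the distinguished element in $\HFi$ of the connected sum is a product, so any cycle in the summand complex representing it must already split (up to homology) as a product of cycles in the two factor summand complexes, which forces one of the factor maps to be nontrivial at the corresponding parameter. This is exactly the argument used for knots in $S^3$ in \cite[Theorem 1.7]{LivingstonUpsilon} (cf.\ also \cite{OSS}), and nothing about it uses that the ambient manifold is $S^3$ beyond the Künneth formula and the splitting of $\HFi$.

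The main obstacle is the reverse inequality in item (2): one must ensure that a cycle in $C^t_s(Y_1 \conn Y_2, K_1 \conn K_2)$ representing the grading-$d(Y_1) + d(Y_2)$ element of $\HFi$ can be promoted to (or shown to be homologous to) a sum of pure tensors whose filtration levels split compatibly with $s = s_1 + s_2$. For $S^3$ the relevant $\HFi$ is one-dimensional in the ambient grading, which makes this automatic; in general $\HFi(Y_k)$ can be large, but the element we care about lives in the top nonzero grading $d(Y_k)$ of the $U$-tower, and $\HFi(Y_1 \conn Y_2) \cong \HFi(Y_1) \otimes \HFi(Y_2)$ as $\F[U,U^{-1}]$-modules with the tower in grading $d(Y_1) + d(Y_2)$ coming from the product of the two towers, so the splitting still goes through. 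I would verify this point carefully — it is essentially the statement that the bottom element of the $U$-tower in $\HFp$, equivalently the $d$-invariant class, is multiplicative under connected sum — and otherwise the proof is a direct transcription of the $S^3$ case.
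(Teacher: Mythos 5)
Your route is essentially the paper's: the paper's entire proof is the observation that Livingston's argument for knots in $S^3$ \cite[Theorem 6.2]{LivingstonUpsilon} carries through verbatim, and your sketch (duality of $\CFKi$ for orientation reversal, the K\"unneth formula for connected sums) is a transcription of that argument. Two remarks. First, the ``main obstacle'' you worry about in item (2) is not an obstacle at all: for any integer homology sphere $Y$ one has $\HFi(Y)\cong\F[U,U^{-1}]$, with the class in grading $d(Y_1)+d(Y_2)=d(Y_1\conn Y_2)$ corresponding under the K\"unneth isomorphism to the product of the distinguished classes; this is exactly why the $S^3$ proof carries over with no modification.

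Second, and more substantively, the justification you give for the harder inequality in (2) is not correct as stated. A cycle in $C^t_s(Y_1\conn Y_2,K_1\conn K_2)$ representing the distinguished class is in general a sum of elementary tensors, and the fact that its homology class splits under the K\"unneth isomorphism gives no control on the filtration levels of representatives; ``any representing cycle splits up to homology as a product of cycles'' is not a valid step, and it is also not the argument Livingston uses. What does transcribe is the standard two-step argument: prove superadditivity, $\Upsilon_{Y_1\conn Y_2,K_1\conn K_2}(t)\ge \Upsilon_{Y_1,K_1}(t)+\Upsilon_{Y_2,K_2}(t)$, by tensoring cycles as in your first inequality, and then obtain the reverse inequality by applying superadditivity to $(Y_1\conn Y_2,K_1\conn K_2)$ and $(-Y_2,K_2^r)$, using item (1) together with the facts that $(Y_2,K_2)\conn(-Y_2,K_2^r)$ is homology concordant to the unknot and that $\Upsilon$ is a homology concordance invariant (Proposition \ref{prop:tauepsilonhomcob}); equivalently, one can run the chain-level duality argument. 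With that replacement your proposal is complete and coincides with the proof the paper outsources to \cite{LivingstonUpsilon}.
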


\begin{proof}
Livingston's proof for knots in $S^3$ \cite[Theorem 6.2]{LivingstonUpsilon} carries through identically in arbitrarily homology spheres.
\end{proof}

However, as noted in the introduction, there is one property of $\tau$ and $\varep$ which does not generalize to arbitrary homology spheres; its failure to generalize is crucial to our proof of Theorem \ref{thm:main} \eqref{it:Zsubgroup}.

\begin{proposition}\label{prop:epsilonzerotauzero}
Suppose $K$ is a knot in $S^3$, or more generally in an integer homology sphere L-space $Y$. If $\varep(Y, K)=0$, then $\tau(Y,K)=0$.
\end{proposition}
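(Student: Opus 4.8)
The plan is to prove the contrapositive-style statement directly: assuming $\varep(Y,K)=0$ for a knot $K$ in an integer homology sphere L-space $Y$, I want to show $\tau(Y,K)=0$. Recall that for an L-space $Y$ we have $\HFred(Y)=0$, so $\HF^+(Y) \cong \mathcal{T}^+$ (a single tower), and every nonzero element of $\HF^+(Y)$ lies in $U^N \HF^+(Y)$ for all $N$ sufficiently large iff it is in the ``bottommost'' part of the tower — i.e. the condition ``$x \in U^N \HF^+(Y)$ for all $N \gg 0$'' is vacuous once $x \ne 0$. So the definitions of $\tau$, $\nu$, $\nu'$ simplify: $\tau(Y,K) = \min\{s \mid (\rho_* \circ \iota_{s*}) \ne 0\}$, and similarly for $\nu$, and the dual statement for $\nu'$. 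This is exactly the situation of knots in $S^3$, where the argument of \cite[Proposition 3.6(2)]{Homcables} applies.

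The key steps, in order. First, since $\varep(Y,K)=0$, the trichotomy lemma gives $\nu(Y,K) = \tau(Y,K) = \nu'(Y,K)$; write $\tau = \tau(Y,K)$. Second, I would run the standard $\varep$-symmetry argument: the hypothesis $\varep(Y,K)=0$ is preserved under mirroring (Proposition~\ref{prop:epsilonprops}\eqref{it:ep(-Y,K)}), and by Proposition~\ref{prop:tauprops}\eqref{it:tau(-Y,K)} we have $\tau(-Y,K) = -\tau(Y,K)$, so it suffices to rule out $\tau(Y,K) < 0$ (equivalently, to show $\tau(Y,K) \le 0$ always, since applying the same to $-Y$ gives $\tau(Y,K)\ge 0$). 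Third — the crux — suppose toward a contradiction that $\tau = \tau(Y,K) < 0$. The condition $\nu'(Y,K) = \tau$ (not $\tau - 1$) means that the map $v'_{\tau *}$ is injective on the relevant class. But I claim that when $\tau < 0$ one can produce a nontrivial cycle in $C\{i=0, j \le \tau - 1\}$ mapping to a generator of the tower in $C\{i=0\}$, forcing $\tau(Y,K) \le \tau - 1 < \tau$, a contradiction — or, dually, that $\nu(Y,K)$ must equal $\tau+1$, contradicting $\varep = 0$. Concretely, I would trace through the chain-level manipulation of \cite[Section 3]{Homcables}: starting from a cycle realizing $\nu'(Y,K) = \tau$ together with the bounding element, project between the subquotient complexes $C\{j = \tau\}$, $C\{\min(i,j-\tau)=0\}$, and $C\{i=0\}$ exactly as in the proof of the trichotomy lemma above, and observe that the L-space hypothesis removes the $U$-tower obstruction, so the surviving class forces $\tau < 0$ to collapse.

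The main obstacle I anticipate is the bookkeeping in the third step: making precise why the two-sided equality $\nu(Y,K)=\tau=\nu'(Y,K)$ is incompatible with $\tau \ne 0$ specifically when $Y$ is an L-space. The cleanest route is probably not to reprove \cite{Homcables} from scratch but to invoke it as a black box: the complex $C = \CFKi(Y,K)$ is a free, finitely generated, filtered $\F[U]$-complex whose total homology is $\F[U,U^{-1}]$ (since $Y$ is a homology-sphere L-space, $\HF^\infty(Y) = \F[U,U^{-1}]$ with the L-space condition pinning down $d(Y) = 0$), so it is formally indistinguishable from the knot Floer complex of a knot in $S^3$ as far as the definitions of $\tau$, $\nu$, $\nu'$, $\varep$ are concerned; hence \cite[Proposition 3.6(2)]{Homcables} applies verbatim. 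I would state this reduction carefully and then cite Hom's result. Thus the proof is essentially: reduce to an abstract statement about filtered $\F[U]$-complexes with homology $\F[U,U^{-1}]$, and apply \cite{Homcables}.
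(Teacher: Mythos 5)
Your closing reduction---regard $\CFKi(Y,K)$ as an abstract filtered $\F[U]$-complex with $H_*(C\{i=0\})\cong\F$ and invoke \cite[Proposition 3.6(2)]{Homcables} as a black box---is exactly the paper's proof, which simply observes that the only feature of the ambient manifold used in Hom's argument is that it is an integer homology sphere L-space; the earlier chain-level sketch and the mirroring reduction are superfluous. One small correction: the L-space condition does not pin down $d(Y)=0$ (e.g.\ $\Sigma(2,3,5)$), but this is harmless, since $\tau$, $\nu$, $\nu'$, and $\varep$ depend only on the $U$-module structure of $\HFp(Y)\cong\mathcal{T}^+$ and never on the absolute grading.
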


\begin{proof}
This follows from the proof of \cite[Proposition 3.6(2)]{Homcables}; indeed, the only feature of the ambient manifold used in the proof is that it is an integer homology sphere L-space.
\end{proof}

\begin{proof}[Proof of Corollary \ref{cor:obstruction}]
The corollary follows from Propositions \ref{prop:tauepsilonhomcob} and \ref{prop:epsilonzerotauzero}.
\end{proof}

\begin{remark}
We have implicitly been working with \emph{oriented} knots throughout the section. By
\cite[Proposition 3.9]{OSknots}, note that $\CFKi(Y,K)$ is invariant under orientation
reversal for knots in homology spheres, since the unique spin$^c$ structure on $Y$ is necessarily
self-conjugate.  It follows that all the invariants discussed in this section are also invariant
under knot orientation reversal.
\end{remark}

\section{The filtered mapping cone formula} \label{sec:surgery-cfk}

Continuing with the notation from the previous section, let $C = \CFKi(Y,K)$ be the reduced, doubly-filtered knot Floer complex of a knot $K$ in a homology sphere $Y$. We will now briefly describe the surgery formula from \cite{HeddenLevine} for computing $\CFKi(Y_1(K), \tilde K)$, where $\tilde K$ is the core circle of the surgery solid torus in $1$-surgery on $K$. Essentially, this formula entails putting an extra filtration on the mapping cone complex for $\HFp(Y_1(K))$ given by Ozsv\'ath and Szab\'o \cite{OSinteger}.

To begin, there is a $U$-equivariant, grading-preserving chain homotopy equivalence $\Psi^\infty \co C \to C$, which restricts to a homotopy equivalence between the subcomplexes $C\{j \le s\}$ and $C \{i \le s\}$. This map $\Psi^\infty$ need not behave well with respect to the other filtration grading.

For each integer $s$, let $A_s^\infty$ and $B_s^\infty$ each denote a copy of the chain complex $C$, and write $A_s^\infty = \bigoplus_{i,j \in \Z} A_s^\infty(i,j)$ and $B_s^\infty = \bigoplus_{i,j \in \Z} B_s^\infty(i,j)$. We define a pair of $\Z$-filtrations $\II$ and $\JJ$ on each of these complexes as follows:
\begin{align*}
\II(A_s^\infty(i,j)) &= \max(i,j-s)  &  \JJ(A_s^\infty(i,j)) &= \max(i+s-1, j) \\
\II(B_s^\infty(i,j)) &= i & \JJ(B_s^\infty(i,j)) &= i+s-1.
\end{align*}
Let $A_s^-$ (resp. $B_s^-$) denote the subcomplex of $A_s^\infty$ (resp. $B_s^\infty$) with $\II<0$, let $A_s^+$ (resp. $B_s^+$) denote the quotient, and let $\hat A_s$ (resp. $\hat B_s$) be the subcomplex of $A_s^+$ (resp. $B_s^+$) with $\II=0$. The Maslov (homological) grading on each complex $A_s^\infty$ (resp.~$B_s^\infty$) is defined to be the Maslov grading on $C$, shifted up by $s(s-1)$ (resp.~$s(s-1)-1$). (The definitions of the $A_s^+$ and $B_s^+$ complexes agree with those in \cite{OSinteger}; the filtration $\JJ$ is introduced in \cite{HeddenLevine}.)

Define maps $v_s^\circ \co A_s^\circ \to B_s^\circ$ and $h_s^\circ \co A_s^\circ \to B_{s+1}^\circ$ as follows: $v_s^\infty$ is the identity map of $C$, and $h_s^\infty$ is $\Psi^\infty$ composed with multiplication by $U^s$. Each of these maps is filtered with respect to both $\II$ and $\JJ$. (For $v_s^\infty$, this is obvious; for $h_s^\infty$, it uses the filtration property of $\Psi^\infty$.) In particular, $v_s^\infty$ (resp. $h_s^\infty$) takes the subcomplex $A_s^-$ into $B_s^-$ (resp. $B_{s+1}^-$), and hence induces a map $v_s^+ \co A_s^+ \to B_s^+$ (resp. $h_s^+ \co A_s^+ \to B_{s+1}^+$), which agrees with the definition in \cite{OSinteger}. Moreover, each of $v_s^\infty$ and $h_s^\infty$ is homogeneous of degree $-1$ with respect to the (shifted) Maslov grading.

For any integers $a \le b$, let
\[
D^\infty_{a,b} \co \bigoplus_{s=a}^b A_s^\infty \to \bigoplus_{s=a+1}^b B_s^\infty
\]
be the map given by the sum of the maps $v_s^\infty \co A_s^\infty \to B_s^\infty$ ($s=a+1, \dots, b$) and $h_s^\infty \co A_s^\infty \to B_{s+1}^\infty$ ($s = a, \dots, b-1$), and let $\X^\infty_{a,b}$ denote the mapping cone of $D^\infty_{a,b}$. It is easy to see that $\II$ and $\JJ$ give $\X^\infty_{a,b}$ the structure of a doubly filtered chain complex with an action of $\F[U,U^{-1}]$. Moreover, it is shown in \cite[Lemma 3.2]{HeddenLevine} that for any $a \le 1-g$ and $b \ge g$ (where $g$ is the genus of $K$), the doubly-filtered chain homotopy type of $\X_{a,b}^\infty$ is independent of $a$ and $b$. Thus, we may define $\X^\infty = \X^\infty_{1-g,g}$. Denote the differential on $\X^\infty$ by $\d^\infty$.
%Let $\X^-$, $\X^+$, and $\widehat\X$ be the subcomplex, quotient, and subquotient complexes corresponding to $\II < 0$, $\II \ge 0$, and $\II=0$, each of which is the mapping cone of analogously defined maps:
%\begin{align*}
%D^\infty \co &\bigoplus_{s = 1-g}^g A_s^\infty \to \bigoplus_{s = 2-g}^g B_s^\infty &
%D^- \co &\bigoplus_{s = 1-g}^g A_s^- \to \bigoplus_{s = 2-g}^g B_s^- \\
%D^+ \co &\bigoplus_{s = 1-g}^g A_s^+ \to \bigoplus_{s = 2-g}^g B_s^+ &
%\widehat D \co &\bigoplus_{s = 1-g}^g \hat A_s \to \bigoplus_{s = 2-g}^g \hat B_s.
%\end{align*}
%Note that $\X^\infty \cong \X^- \otimes_{\F[U]} \F[U,U^{-1}]$, so we may pass between them interchangeably.
%

The following theorem is the main result of \cite{HeddenLevine}:
\begin{theorem} \label{thm:surgery}
The chain complex $\X^\infty$ is filtered chain homotopy equivalent to $\CFKi(Y_1(K), \tilde K)$, where the filtrations $\II$ and $\JJ$ on $\X^\infty$ correspond to $i$ and $j$ on $\CFKi(Y_1(K), \tilde K)$.
\end{theorem}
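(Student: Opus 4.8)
The plan is to adapt Ozsv\'ath and Szab\'o's proof of the integer surgery formula \cite{OSinteger} to the doubly-pointed setting, carrying a second basepoint through the whole argument to record the core circle $\tilde K$ and the second filtration $\JJ$. I would begin from a doubly-pointed Heegaard diagram $(\Sigma, \alpha, \beta, w, z)$ for $(Y,K)$ containing a standard winding region near the basepoints, and from it build a Heegaard triple $(\Sigma, \alpha, \beta, \gamma, w, z)$ in which the $\beta$-curves describe $Y$, the $\gamma$-curves describe $Y_1(K)$, and the basepoint $z$ records $\tilde K$, whose meridian can be isotoped onto $\Sigma$ inside the winding region. Winding the relevant $\gamma$-curve around the meridian and truncating, the standard analysis of intersection points in the winding region organizes $\CFh$ (resp.\ $\CF^\pm$) of this diagram into the two families of complexes appearing in $\X$: the ``large surgery'' complexes $A_s$ and the copies $B_s$ of $\CFh(Y)$ (resp.\ $\CF^\pm(Y)$), with the internal differential recovering the differentials of the $A_s$ and $B_s$ and the off-diagonal terms given by holomorphic triangle maps supported in the winding region. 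These triangle maps are exactly $v_s^\circ$ and $h_s^\circ$; the factor $U^s$ in $h_s^\infty$ and the appearance of the flip map $\phi$ arise, as in \cite{OSknots, OSinteger}, from the bookkeeping of how triangles in the winding region cross $w$ and $z$.

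With the diagram in hand, the next step is to identify the two filtrations. The filtration $\II$, which measures powers of $U$, is matched with the $i$-filtration of $\CFKi(Y_1(K),\tilde K)$ exactly as in the unfiltered formula, since the subcomplex $\II<0$, the quotient $\II \ge 0$, and the subquotient $\II=0$ recover $\CFm$, $\CFp$, and $\CFh$ of $Y_1(K)$. The genuinely new point is that the Alexander filtration of $(Y_1(K),\tilde K)$, which is computed from the $z$-minus-$w$ multiplicities of Whitney disks (normalized so that the unknot gives the trivial filtration), is read off from the $z$-multiplicities of the winding-region triangles, and one checks that this produces precisely the stated formulas $\II(A_s^\infty(i,j)) = \max(i,j-s)$, $\JJ(A_s^\infty(i,j)) = \max(i+s-1,j)$, $\II(B_s^\infty(i,j)) = i$, and $\JJ(B_s^\infty(i,j)) = i+s-1$. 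Along the way I would confirm the Maslov grading shifts $s(s-1)$ on $A_s^\infty$ and $s(s-1)-1$ on $B_s^\infty$ using the standard grading-change formula for the two-handle cobordisms underlying the triangle maps, as in \cite{OSinteger}.

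Finally, I would establish that the doubly-filtered quasi-isomorphism type of $\X^\infty$ is well-defined and agrees with $\CFKi(Y_1(K),\tilde K)$. The handleslide, isotopy, and stabilization invariance statements of \cite{OSinteger} go through with $z$ inserted, using that the chain homotopies involved are supported by holomorphic curves with controlled $z$-multiplicity in the winding region and hence are $\JJ$-filtered; independence of the truncation parameters $a,b$ is exactly the observation already recorded above, namely that $v_s^\infty$ (resp.\ $h_s^\infty$) is a doubly-filtered isomorphism (resp.\ quasi-isomorphism) once $s > g$ (resp.\ $s \le -g$). The main obstacle is precisely this filtered bookkeeping: in the unfiltered setting one freely invokes handleslide isomorphisms and the flip map $\phi$ without any concern for $\JJ$, but here every such identification must be shown to respect $\JJ$. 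In particular one must verify that $h_s^\infty = U^s \phi$ is filtered with respect to both $\II$ and $\JJ$ even though $\phi$ does not respect the individual $i$- and $j$-gradings of $C$, using only its filtration property relating $C\{j \le s\}$ to $C\{i \le s\}$. Getting all of these compatibilities to hold simultaneously, rather than any single holomorphic curve count, is where the real work lies.
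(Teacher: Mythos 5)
Your overall strategy---rerunning Ozsv\'ath--Szab\'o's integer surgeries argument while carrying a second basepoint so that the extra filtration $\JJ$ is read off from $z$-multiplicities near the winding region---is the same strategy as the proof sketched here (and carried out in \cite{HeddenLevine}): the paper likewise adapts \cite{OSinteger, OSrational}, identifies the $A_s$ complexes with large surgeries, and matches $\JJ$ on the $A_s$'s with the core-circle filtration of Hedden--Plamenevskaya \cite{HeddenPlamenevskayaRational}, which is exactly your ``$z$-multiplicities of winding-region triangles'' idea. So the plan is pointed in the right direction, but it elides two steps that the paper explicitly flags as where the content lies, and one of them is a genuine gap as written.

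First, the mapping cone does not arise by ``organizing $\CFh$ (resp.\ $\CF^\pm$) of this diagram into the two families of complexes'': the complex of a single doubly-pointed Heegaard triple for $Y_1(K)$ does not decompose into the $A_s$ and $B_s$ pieces. Rather, one uses the surgery exact triangle to show that $\CF^\circ(Y_1(K))$ is quasi-isomorphic to the cone of a map $\CF^\circ(Y_{1+m}(K)) \to \bigoplus^m \CF^\circ(Y)$ for $m \gg 0$ (this already requires null-homotopies and higher polygon counts), then identifies $\CF^\circ(Y_{1+m}(K))$ with $\bigoplus_s A_s^\circ$ via the large surgery formula, and finally checks that this identification is \emph{filtered} with respect to the meridian-induced knot filtration---which requires careful Spin$^c$ bookkeeping on the surgery cobordisms. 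Your sketch compresses all of this into ``standard analysis of intersection points,'' which is where the real argument lives. Second, and more seriously, you assert the argument goes through directly for $\CF^\pm$ and hence for $\CFKi(Y_1(K),\tilde K)$. As the paper notes, the filtered identification with the cone works on the nose only for $\CFh$ and the finite truncations $\CF^\delta$; recovering the statement for $\CFp$, $\CFm$, and $\CFi$ (hence for the full doubly-filtered $\X^\infty$) requires a separate algebraic limiting/truncation argument, using ingredients in the spirit of \cite{ManolescuOzsvath} and \cite{LidmanInfinity}---the latter being needed precisely because $Y$ is an arbitrary homology sphere, so one cannot lean on the simplicity of $\HFi(S^3)$ as in the original proof. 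Your proposal should either restrict its direct argument to $\CFh$/$\CF^\delta$ and then supply this algebraic step, or explain why the filtered quasi-isomorphism survives the passage to the untruncated theories; as written, that passage is missing.
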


Using the two filtration functions $\II,\JJ$, we may view the vector space $\X^\infty$ as a direct sum of pieces $\bigoplus_{i,j} \X(i,j)$. Given a filtered basis for $\X^\infty$, we may write $\d^\infty = \d + \d'$, where $\d$ consists of the terms that preserve both filtrations and $\d'$ consists of the terms that strictly drop at least one of them. Since the action of $U$ takes $\X(i,j)$ isomorphically to $\X(i-1,j-1)$, we can understand $\d$ by looking only at the summands with $i=0$. Each summand $\X(0,s)$ (with its internal differential $\d$) can be identified with $\CFKh(Y_1(K), \tilde K, s)$, the associated graded complex of $\CFKh(Y_1(K), \tilde K)$ in Alexander grading $s$. Hence $H_*(\X(0,s)) \cong \HFKh(Y_1(K), \tilde K, s)$. For $-g<s<g$, $\X(0,s)$ is easily described as the mapping cone
\begin{equation} \label{eq:hat-mapping-cone}
 A_s\{i \le 0, j=s\} \oplus A_{s+1}\{i = 0, j \le s\} \xrightarrow {(h_s, \ v_{s+1})} B_{s+1}\{i=0\},
\end{equation}
while $\X(0,-g) = A_{-g+1}(0,-g)$ and $\X(0,g) = A_g(0,g)$ (with vanishing differential).

Moreover, the $\II=0$ subquotient of $\X^\infty$ (whose homology is $\HFh(Y_1(K))$) is also isomorphic to $\bigoplus_{s=-g}^g \X(0,s)$ as a group. The higher differentials on $\CFKh(Y_1(K), \tilde K)$ (that is, the ones that decrease the $\JJ$ grading) are given completely by the part of the internal differential on $\hat A_s$ that takes $A_s(0,s) \subset A_s\{i\le
0, j = s\} \subset \X(0,s)$ into $A_s\{0, j \le s-1\} \subset \X(0,s-1)$. (While all of these differentials decrease $\JJ$ by $1$, terms which shift $\JJ$ by more than $1$ may nevertheless arise after passing to a
reduced model for $\CFKh(Y_1(K), \tilde K)$.) This description essentially agrees with the surgery formula for $\CFKh(Y_1(K), \tilde K)$ described by Eftekhary \cite{EftekharyIncompressible}, modulo some differences in conventions and notation. (However, Eftekhary's work does not describe the full doubly filtered complex $\CFKi(Y_1(K), \tilde K)$.)

Typically, one wants to obtain a \emph{reduced} model for $\CFK^\infty$ (i.e., one in which every term in the differential strictly lowers at least one of the filtrations), which makes it easy to read off invariants such as $\tau$, $\varep$, and $\Upsilon$ as in the previous section. We may pass from $\X^\infty$ to a reduced via the following ``cancellation'' procedure (see, e.g., \cite[Proposition 11.57]{LOT}). In each summand $\X(0,s)$, choose a basis $\{y_i\}$ for $\Im(\d)$, and choose elements $x_i \in \X(0,s)$ such that $\d(x_i) = y_i$. Then $\partial^\infty(x_i) = y_i + {}$ terms in lower filtration levels. The subcomplex of $\X^\infty$ spanned (over $\F[U,U^{-1}]$) by all the $\{x_i, \d^\infty(x_i)\}$ is acyclic, and the quotient $Q$ of $\X^\infty$ by this subcomplex is reduced and is filtered homotopy equivalent to the original complex $\X^\infty$. The generators for $Q$ (over $\F[U,U^{-1}]$) are naturally in bijection with the generators (over $\F$) of $\HFKh(Y_1(K), \tilde K)$, and the differential is induced from the terms in $\d'$ which are not-filtration preserving. As a practical matter, it is thus useful to begin by using the individual complexes $\X(0,s)$ to compute $\HFKh(Y_1(K), \tilde K)$. We shall carry out a computation using this strategy in Section \ref{sec:mappingcone}.

\section{The knot Floer complex for the core of surgery on a cable of the trefoil}\label{sec:mappingcone}

Throughout this section, let $T=T_{2,-3;2,3}$, the $(2,3)$-cable of the left-handed trefoil. (Here,
$2$ denotes the longitudinal winding and $3$ denotes the meridional winding.) Note that the genus of $T$ is 3; indeed, a minimal genus Seifert surface can be built from two parallel copies of Seifert surfaces for $T_{2, -3}$ together with three twisted bands between them. Let $Z=S^3_1(T)$, and
let $\T \subset Z$ denote the knot obtained as the core of the surgery. In this section, we will
use the mapping cone formula from the previous section to compute $\CFKi(Z, \T)$ and the associated
invariants $\tau$, $\varep$, and $\Upsilon$, and use this computation to prove Proposition
\ref{prop:epsilonzerotaunonzero}, which gives an infinite cyclic subgroup of $\CZhat/\CZ$. The main
technical result of this section is the following proposition:

\begin{proposition} \label{prop:cfkcore}
Consider $T=T_{2,-3;2,3}$ as above. Then $\CFKi(Z, \T)$ is generated over $\F[U,U^{-1}]$ by
generators $A, \dots, M$, with differential as shown in Figure \ref{fig:CFKinftycore}. (The Maslov gradings of the generators are given in Table \ref{tab:CFKinftycore}, below.)
\end{proposition}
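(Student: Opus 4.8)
The strategy is to apply the filtered mapping cone formula of Theorem~\ref{thm:surgery} to $K = J$, which identifies $\CFKi(M,\J)$ with the doubly-filtered complex $\X^\infty$, and then to reduce $\X^\infty$ to the model in Figure~\ref{fig:CFKinftycore} via the cancellation procedure described at the end of Section~\ref{sec:surgery-cfk}. The first task is therefore to write down a concrete reduced model for the input complex $C = \CFKi(S^3, J)$. Since $J = T_{2,-3;2,3}$ is the $(2,3)$-cable of the left-handed trefoil and $\varep(T_{2,-3}) = -1$, the complex $C$ is given by the cabling formula for $\CFK^\infty$ (the cabling parameter $3$ is large enough relative to $g(T_{2,-3}) = 1$ that the stable form of the formula applies), and in particular $g(J) = 2\,g(T_{2,-3}) + g(T_{2,3}) = 3$. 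With $g = 3$ in hand, $\X^\infty = \X^\infty_{-2,3}$ is assembled from the complexes $A_s$ for $-2 \le s \le 3$ and $B_s$ for $-1 \le s \le 3$, together with the vertical maps $v_s$, the horizontal maps $h_s = U^s\phi$, and a fixed choice of the filtered homotopy equivalence $\phi \co C \to C$ interchanging $C\{j \le s\}$ and $C\{i \le s\}$; writing these maps out explicitly on our chosen basis for $C$ is the bulk of the setup.

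Next I would compute the associated graded of $\X^\infty$ with respect to the Alexander filtration, i.e., the slices $\X(0,s)$. For $-3 < s < 3$, Equation~\eqref{eq:hat-mapping-cone} presents $\X(0,s)$ as the mapping cone
\[ A_s\{i \le 0,\ j = s\} \oplus A_{s+1}\{i = 0,\ j \le s\} \xrightarrow{(h_s,\, v_{s+1})} B_{s+1}\{i = 0\}, \]
while $\X(0,3) = A_3(0,3)$ and $\X(0,-3) = A_{-3+1}(0,-3)$ carry the zero differential. Computing $H_*$ of each $\X(0,s)$ yields $\HFKh(M, \J, s)$, and cancelling the internal (filtration-preserving) differential $\d$ on each $\X(0,s)$ leaves a basis consisting of the surviving generators; collecting these over $-3 \le s \le 3$ produces the generators $A, \dots, M$ of Figure~\ref{fig:CFKinftycore} with their Alexander gradings. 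The Maslov gradings in Table~\ref{tab:CFKinftycore} are then read off from the Maslov grading on $C$, shifted by $s(s-1)$ on $A_s$ and by $s(s-1)-1$ on $B_s$. As consistency checks, one verifies that the resulting $\HFh(M)$ and $d(M)$ agree with the computation of $\HFp(S^3_1(J))$ from the integer surgery formula (in particular $d(M) = -2V_0(J)$ by Theorem~\ref{thm:signs}).

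The remaining and most delicate step is to pin down the genuine differentials of $\CFKi(M,\J)$, that is, the non-filtration-preserving part $\d'$ of $\d^\infty$ after cancellation, rather than merely the associated graded $\HFKh(M,\J)$. The $\JJ$-dropping (horizontal) arrows come, as explained in Section~\ref{sec:surgery-cfk}, from the part of the internal differential on $\hat A_s$ that carries $A_s(0,s) \subset A_s\{i \le 0,\ j = s\} \subset \X(0,s)$ into $A_s\{i = 0,\ j \le s-1\} \subset \X(0,s-1)$; the $\II$-dropping (vertical) arrows come from the $v_s$ and $h_s$ components of $\d^\infty$ together with the terms of the internal differentials that drop $\II$. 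Each of these must be transported through the cancellation of the previous step: one records which $x_i$ was cancelled against which $y_i$ in each $\X(0,s)$ and composes with the low-filtration tail of $\d^\infty(x_i)$. The output is the differential displayed in Figure~\ref{fig:CFKinftycore}; one checks $(\d')^2 = 0$ and the required $(i,j) \mapsto (j,i)$ symmetry of $\CFKi$ as a final sanity check, and confirms that the resulting $\tau$, $\varep$, and $\Upsilon$ are consistent with Proposition~\ref{prop:epsilonzerotaunonzero}.

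The main obstacle is twofold. First, one needs the cabled input complex $\CFKi(S^3,J)$ explicitly and with the correct orientation conventions for the cabling annulus and the mirroring of the trefoil; the cabling formula produces a complex with more generators than the target, so a fair amount of it must be written down and organized carefully. Second, and harder, is tracking the double filtration $(\II,\JJ)$ through the mapping cone and the cancellation in order to recover the true arrows $\d'$: the homotopy equivalence $\phi$ appearing in $h_s$ behaves badly with respect to the second grading, so the horizontal differentials are invisible from the associated graded data and require choosing a convenient model for $\phi$ together with careful bookkeeping. Everything else is a lengthy but essentially routine unwinding of the formulas from Section~\ref{sec:surgery-cfk}.
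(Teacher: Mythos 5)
Your strategy is essentially the paper's: apply the filtered mapping cone formula of Theorem~\ref{thm:surgery} with $g(J)=3$ (so $\X^\infty=\X^\infty_{-2,3}$), compute the slices $\X(0,s)$ via \eqref{eq:hat-mapping-cone} to obtain $\HFKh(M,\J)$, and then run the cancellation procedure, transporting the lower-filtration tails of $\d^\infty$ through the cancellation to recover the reduced differential of Figure~\ref{fig:CFKinftycore}; your remarks about fixing a model for $\phi$ and about where the $\II$- and $\JJ$-dropping arrows come from match what the paper actually does.

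The one genuine gap is your treatment of the input complex $C=\CFKi(S^3,J)$. You assert that $C$ ``is given by the cabling formula for $\CFK^\infty$'' in a stable range, but no such formula computing the full doubly filtered chain homotopy type of a cable is available to cite; the paper itself notes that even the bordered minus theory that would produce it has not yet appeared. What the paper does instead is start from the known rank-$11$ $\Z$-filtered complex $\CFKh(S^3,J)$ (Petkova; Hedden's thesis), use the symmetry between $C\{i=0\}$ and $C\{j=0\}$ to determine most of $\d^\infty$, resolve the remaining ambiguity between the two generators with identical Alexander and Maslov gradings using $\varep(J)=-1$ together with \cite[Lemma 3.2]{Homcables}, and then pin down the diagonal components of $\d^\infty$ by grading considerations and $(\d^\infty)^2=0$. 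If you replace your appeal to a cabling formula with this derivation (or with an explicit reference for the full $\CFKi(S^3,J)$), the rest of your outline proceeds exactly as in the paper; note also that the subsequent cancellation step, which you describe correctly in outline, is where the real computational content of the proposition lies, so a complete proof must actually carry out the bookkeeping you sketch.
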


\begin{figure}%[htb!]
\begin{tikzpicture}

	\begin{scope}[thin, black!20!white]
		\draw [<->] (-3, 0.5) -- (4, 0.5);
		\draw [<->] (0.5, -3) -- (0.5, 4);
	\end{scope}
	\draw[step=1, black!50!white, very thin] (-2.9, -2.9) grid (3.9, 3.9);
	
	\filldraw (-1.5, 1.5) circle (2pt) node[] (A){};
	\filldraw (-1.5, 0.5) circle (2pt) node[] (B){};
	\filldraw (-0.65, 0.35) circle (2pt) node[] (D){};	
	\filldraw (-1.5, -0.5) circle (2pt) node[] (C){};
	\filldraw (0.55, 1.5) circle (2pt) node[] (F){};	
	\filldraw (-0.45, 0.55) circle (2pt) node[] (E){};	
	\filldraw (0.55, 0.55) circle (2pt) node[] (G){};
	\filldraw (0.55,-0.45) circle (2pt) node[] (J){};
	\filldraw (1.5, 0.55) circle (2pt) node[] (K){};
	\filldraw (0.35, -0.65) circle (2pt) node[] (I){};
	\filldraw (-0.5, -1.5) circle (2pt) node[] (H){};
	\filldraw (1.5, -1.5) circle (2pt) node[] (M){};
	\filldraw (0.5, -1.5) circle (2pt) node[] (L){};

	\draw [very thick, ->] (A) -- (B);
	\draw [very thick, ->] (D) -- (C);
	\draw [very thick, ->] (F) -- (E);
	\draw [very thick, ->] (G) -- (E);
	\draw [very thick, ->] (G) -- (J);
	\draw [very thick, ->] (I) -- (H);
	\draw [very thick, ->] (K) -- (J);
	\draw [very thick, ->] (M) -- (L);

	\node [above] at (A) {$U^2A$};
	\node [left] at (B) {$U^2B$};
	\node [left] at (C) {$U^2C$};
	\node [below,xshift=5] at (D) {$UD$};
	\node [above,xshift=-5] at (E) {$UE$};
	\node [above] at (F) {$F$};
	\node [above] at (G) {$G$};
	\node [left] at (H) {$UH$};
	\node [left] at (I) {$I$};
	\node [right,xshift=1] at (J) {$J$};
	\node [right] at (K) {$U^{-1}K$};
	\node [below] at (L) {$L$};
	\node [right] at (M) {$U^{-1}M$};

\end{tikzpicture}
\caption{The reduced model for $\CFKi(Z,\T)$, drawn in the $(i,j)$-plane. }
\label{fig:CFKinftycore}
\end{figure}

Before proving this result, we show how it implies Proposition \ref{prop:epsilonzerotaunonzero}.

\begin{corollary} \label{cor:tauepsiloncore}
The knot $\T \subset Z$ satisfies:
\begin{align*}
\tau(Z,\T) &= -1 \\
\varep(Z,\T) &= 0 \\
\Upsilon_{Z,\T}(t) &=
\begin{cases}
t  & 0 \le t \le 1 \\
2-t & 1 \le t \le 2.
\end{cases}
\end{align*}
\end{corollary}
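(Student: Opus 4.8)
The proof of Corollary \ref{cor:tauepsiloncore} will read all three invariants directly off the reduced model for $\CFKi(M,\J)$ supplied by Proposition \ref{prop:cfkcore} (Figure \ref{fig:CFKinftycore}), together with the Maslov gradings in Table \ref{tab:CFKinftycore}. The key point is that $\tau$, $\varep$, and $\Upsilon$ are all determined by the filtered chain-homotopy type of $\CFKi$, and Figure \ref{fig:CFKinftycore} is small enough to analyze by hand. Since $M$ is an integer homology sphere, $\CFh(M) \simeq C\{i=0\}$ has a distinguished generator in grading $d(M)$, and each invariant asks for the minimal filtration level at which that generator (or its image in $\HF^+$, or in $C\{\min(i,j-s)=0\}$) survives.

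\textbf{Step 1: locate the homology generator.} First I would identify, among the generators $A,\dots,M$, the unique cycle in $C\{i=0\}$ generating $H_*(C\{i=0\}) \cong \F$ in the correct Maslov grading; equivalently, the generator of $\HFh(M)$ that maps nontrivially into $U^N\HF^+(M)$ for all $N \gg 0$. From Figure \ref{fig:CFKinftycore}, the "vertically simplified" piece (the arrows preserving $i$) lets one read off which generator is this surviving cycle and compute its Alexander ($j$-) filtration level; that level is $\tau(M,\J)$. I expect this to yield $\tau(M,\J) = -1$, matching Proposition \ref{prop:epsilonzerotaunonzero}(1), and I would double-check it against $\tau(-M,\J)=-\tau(M,\J)$ and the known fact $\tau(J)$ from the cabling formula if needed.

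\textbf{Step 2: compute $\nu$ and $\nu'$, hence $\varep$.} Next I would examine the maps $v_{s*}$ and $v'_{s*}$ at $s = \tau = -1$, i.e.\ whether the surviving cycle, after passing through $C\{\max(i,j+1)=0\}$ and $C\{\min(i,j+1)=0\}$ respectively, still represents the distinguished class. Using the Lemma before the definition of $\varep$, the three cases are mutually exclusive, so it suffices to check that $\nu(M,\J)=\tau(M,\J)$ and $\nu'(M,\J)=\tau(M,\J)$, which from the picture amounts to verifying that the relevant "horizontal" arrows in Figure \ref{fig:CFKinftycore} do not kill the image of the generator. This should give $\varep(M,\J)=0$.

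\textbf{Step 3: compute $\Upsilon$.} Finally, for each $t\in[0,2]$ I would determine $\min\{s \mid \Im f^t_{s*} \text{ contains a nontrivial element in grading } d(M)\}$, where $f^t_s$ is inclusion of $C^t_s(M,\J) = C\{(1-t/2)i + (t/2)j \le s\}$. Concretely, one sweeps a line of slope depending on $t$ across Figure \ref{fig:CFKinftycore} and finds the first $s$ at which the distinguished homology class is hit; because the complex is small, the answer is a piecewise-linear function of $t$ with at most one breakpoint, and one expects the breakpoint at $t=1$, giving $\Upsilon_{M,\J}(t) = t$ on $[0,1]$ and $2-t$ on $[1,2]$, i.e.\ $-2$ times the relevant minimum. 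The main obstacle here is purely bookkeeping: correctly tracking Maslov gradings (so that one selects the class in grading $d(M)$ and not some other homology class) and correctly handling the $U$-powers labeling the generators in Figure \ref{fig:CFKinftycore}, since these shift both filtration levels and gradings. I do not anticipate any conceptual difficulty beyond Proposition \ref{prop:cfkcore} itself; this corollary is a direct computation.
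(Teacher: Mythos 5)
Your plan is correct and follows essentially the same route as the paper's proof: there, all three invariants are read directly off the reduced model of Proposition \ref{prop:cfkcore}, with the generator $K$ at $(i,j)=(0,-1)$ giving $\tau(M,\J)=\nu(M,\J)=\nu'(M,\J)=-1$ (hence $\varep(M,\J)=0$), and $\Upsilon_{M,\J}$ computed by noting that the grading-$d(M)=-2$ part of $\HFi(M)$ is generated by the cycle $\xi=K+UG+UF$, whose membership in $C^t_s(M,\J)$ is governed by $K$ and forces $s\ge -t/2$ for $t\in[0,1]$, with the case $t\in[1,2]$ following by symmetry. The only bookkeeping point your sketch leaves implicit is that the distinguished class is this sum of generators rather than a single one, but since its filtration level is determined by $K$, your sweeping-line computation yields the same answer.
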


\begin{proof}
To determine $\tau(Z,\T)$ and $\varep(Z,\T)$, consider the generator $K$ (which is at $(i,j)
=(0,-1)$). The homology class of $K$ in $\HFp(Z)$ is a nontrivial element of both $\Im (\rho_* \circ
\iota_{-1*})$ and $\Im (\rho_* \circ v_{-1*})$ that is in the image of $U^N \HF^+(Z)$ for all $N \gg
0$, and moreover $v'_{-1*}([K]) \ne 0$. (See Definition \ref{def:taunu}.) From this observation, it is straightforward to verify that $\tau(Z,\T) = \nu(Z,\T) = \nu'(Z,\T) = -1$, and so $\varep(Z,\T) =0$.

For the computation of $\Upsilon_{Z,\T}$, we first note that $d(Z)=-2$, and the part of $\HFi(Z)$ in
grading $-2$ is generated by the cycle $\xi = K + UG + UF$. For any $t \in [0,2]$,
$\Upsilon_{Z,\T}(t)$ equals $-2$ times the minimal value of $s$ for which $\xi \in C_s^t(Z,\T)$.
When $t \in [0,1]$, we see that
\[
[\xi] \in \im (f_{s*}^t)  \quad \Longleftrightarrow \quad K \in C_s^t(Z,\T) \quad
\Longleftrightarrow \quad s \ge -\frac{t}{2},
\]
so $\Upsilon_{Z,\T}(t) = t$. The case when $t \in  [1,2]$ follows symmetrically.
\end{proof}

\begin{proof}[Proof of Proposition \ref{prop:epsilonzerotaunonzero}]
The homology sphere $Z$ does not bound a homology ball, since $d(Z)=-2$ as noted above. However, $Y
= Z \conn {-Z}$ does bound an integer homology ball, namely $(Z \smallsetminus B^3) \times I$. The
knot $K = \T \conn U \subset Z \conn -Z$ (where $U$ denotes the unknot in $-Z$) thus represents an element of $\CZhat$, and it has the same values
of $\tau$, $\varep$, and $\Upsilon$ as $(Z,\T)$, by the additivity properties of all three
invariants.
\end{proof}

The rest of this section is devoted to proving Proposition \ref{prop:cfkcore} using the filtered
mapping cone formula of \cite{HeddenLevine}, as described in Section~\ref{sec:surgery-cfk}. (Lipshitz, Ozsv\'ath, and Thurston have announced results providing a minus version of bordered Floer homology, with which we were able to give an alternate proof of Proposition \ref{prop:cfkcore}.)

\begin{figure}%[htb!]
\begin{tikzpicture}
	\begin{scope}[thin, black!20!white]
	%	\draw [<->] (-5, 0.5) -- (6, 0.5);
	%	\draw [<->] (0.5, -5) -- (0.5, 6);
	\end{scope}
	\draw[step=1, black!30!white, very thin] (0, -3) grid (1, 4);

	\filldraw (0.65, 3.5) circle (2pt) node[] (a){};
	\filldraw (0.45, 2.5) circle (2pt) node[] (b){};
	\filldraw (0.65, 1.5) circle (2pt) node[] (c){};
	\filldraw (0.45, 1.5) circle (2pt) node[] (d){};
	\filldraw (0.7, 0.5) circle (2pt) node[] (e){};
	\filldraw (0.5, 0.5) circle (2pt) node[] (f){};
	\filldraw (0.3, 0.5) circle (2pt) node[] (g){};
	\filldraw (0.7, -0.5) circle (2pt) node[] (h){};
	\filldraw (0.5, -0.5) circle (2pt) node[] (i){};
	\filldraw (0.5, -1.5) circle (2pt) node[] (j){};	
	\filldraw (0.5, -2.5) circle (2pt) node[] (k){};	

	\draw [very thick, ->] (a) -- (c);
	\draw [very thick, ->] (b) -- (d);
	\draw [very thick, ->] (e) -- (h);
	\draw [very thick, ->] (f) -- (i);
	\draw [very thick, ->] (j) -- (k);

	\node [right] at (a) {$(2)$};
	\node [left] at (b) {$(1)$};
	\node [right] at (c) {$(1)$};
	\node [left] at (d) {$(0)$};
	\node [right] at (e) {$(-1)$};
	\node [above] at (f) {$(0)$};
	\node [left] at (g) {$(0)$};
	\node [right] at (h) {$(-2)$};
	\node [left] at (i) {$(-1)$};
	\node [right] at (j) {$(-3)$};
	\node [right] at (k) {$(-4)$};

\end{tikzpicture}
\caption{The $\Z$-filtered complex $\CFKh(S^3,T)$, where $T$ is the $(2,3)$ cable of the
left-handed trefoil.}
\label{fig:CFKhatcable}
\end{figure}

\begin{figure}%[htb!]
\subfigure[]{ \begin{tikzpicture}

	\begin{scope}[thin, black!20!white]
		\draw [<->] (-2, 0.5) -- (3, 0.5);
		\draw [<->] (0.5, -2) -- (0.5, 3);
	\end{scope}
	\draw[step=1, black!50!white, very thin] (-1.9, -1.9) grid (2.9, 2.9);
	
\foreach \x in {0}
{	
	\filldraw (\x+0.45, \x+0.65) circle (2pt) node[] (a){};
	\filldraw (\x-0.35, \x+0.65) circle (2pt) node[] (b){};
	\filldraw (\x-0.35, \x+2.35) circle (2pt) node[] (c){};
	\filldraw (\x+0.35, \x+2.35) circle (2pt) node[] (d){};
	\filldraw (\x+0.35, \x+1.35) circle (2pt) node[] (e){};
	\filldraw (\x+1.35, \x+1.35) circle (2pt) node[] (f){};
	\filldraw (\x+1.35, \x+0.35) circle (2pt) node[] (g){};
	\filldraw (\x+2.35, \x+0.35) circle (2pt) node[] (h){};
	\filldraw (\x+2.35, \x-0.35) circle (2pt) node[] (i){};
	\filldraw (\x+0.65, \x-0.35) circle (2pt) node[] (j){};	
	\filldraw (\x+0.65, \x+0.45) circle (2pt) node[] (k){};	
	%\draw [very thick, dashed, ->] (a) -- (b);
	\draw [very thick, ->] (c) -- (b);
	\draw [very thick, ->] (d) -- (c);
	\draw [very thick, ->] (d) -- (e);
	\draw [very thick, ->] (f) -- (e);
	\draw [very thick, ->] (f) -- (g);
	\draw [very thick, ->] (h) -- (g);
	\draw [very thick, ->] (h) -- (i);
	\draw [very thick, ->] (i) -- (j);
	\draw [very thick, ->] (k) -- (j);
	%\draw [very thick, dashed, ->] (e) -- (b);
	%\draw [very thick, dashed, ->] (f) -- (a);
	%\draw [very thick, dashed, ->] (f) -- (k);
	%\draw [very thick, dashed, ->] (g) -- (j);
	
	\node [above] at (a) {$a$};
	\node [left] at (b) {$b$};
	\node [above] at (c) {$c$};
	\node [above] at (d) {$d$};
	\node [above right] at (e) {$e$};
	\node [above] at (f) {$f$};
	\node [above right] at (g) {$g$};
	\node [above] at (h) {$h$};
	\node [right] at (i) {$i$};
	\node [below] at (j) {$j$};
	\node [right] at (k) {$k$};
}
\end{tikzpicture}
\label{subfig:pre}
} \qquad
\subfigure[]{
\begin{tikzpicture}

	\begin{scope}[thin, black!20!white]
		\draw [<->] (-2, 0.5) -- (3, 0.5);
		\draw [<->] (0.5, -2) -- (0.5, 3);
	\end{scope}
	\draw[step=1, black!50!white, very thin] (-1.9, -1.9) grid (2.9, 2.9);
	
\foreach \x in {0}
{	
	\filldraw (\x+0.45, \x+0.65) circle (2pt) node[] (a){};
	\filldraw (\x-0.35, \x+0.65) circle (2pt) node[] (b){};
	\filldraw (\x-0.35, \x+2.35) circle (2pt) node[] (c){};
	\filldraw (\x+0.35, \x+2.35) circle (2pt) node[] (d){};
	\filldraw (\x+0.35, \x+1.35) circle (2pt) node[] (e){};
	\filldraw (\x+1.35, \x+1.35) circle (2pt) node[] (f){};
	\filldraw (\x+1.35, \x+0.35) circle (2pt) node[] (g){};
	\filldraw (\x+2.35, \x+0.35) circle (2pt) node[] (h){};
	\filldraw (\x+2.35, \x-0.35) circle (2pt) node[] (i){};
	\filldraw (\x+0.65, \x-0.35) circle (2pt) node[] (j){};	
	\filldraw (\x+0.65, \x+0.45) circle (2pt) node[] (k){};	
	\draw [very thick, ->] (a) -- (b);
	\draw [very thick, ->] (c) -- (b);
	\draw [very thick, ->] (d) -- (c);
	\draw [very thick, ->] (d) -- (e);
	\draw [very thick, ->] (f) -- (e);
	\draw [very thick, ->] (f) -- (g);
	\draw [very thick, ->] (h) -- (g);
	\draw [very thick, ->] (h) -- (i);
	\draw [very thick, ->] (i) -- (j);
	\draw [very thick, ->] (k) -- (j);
	\draw [very thick, ->] (e) -- (b);
	\draw [very thick, ->] (f) -- (a);
	\draw [very thick, ->] (f) -- (k);
	\draw [very thick, ->] (g) -- (j);
	
	\node [above] at (a) {$a$};
	\node [left] at (b) {$b$};
	\node [above] at (c) {$c$};
	\node [above] at (d) {$d$};
	\node [above right] at (e) {$e$};
	\node [above] at (f) {$f$};
	\node [above right] at (g) {$g$};
	\node [above] at (h) {$h$};
	\node [right] at (i) {$i$};
	\node [below] at (j) {$j$};
	\node [right] at (k) {$k$};
}		
\end{tikzpicture}\label{subfig:post}
}
\caption{Left, a labeled $\mathbb{F}[U,U^{-1}]$-basis for $\CFKi(S^3,T)$ drawn in the $(i,j)$-plane.
Shown are the components of $\partial^\infty$ which are easily determined from $\CFKh(S^3,T)$ and
the symmetry between $C\{i=0\}$ and $C\{j=0\}$.  Note that $a$ and $k$ have the same Maslov and
Alexander grading.  Right, the full complex $\CFKi(S^3,T)$ expressed in terms of an
$\mathbb{F}[U,U^{-1}]$-basis.}
\label{fig:CFKicablegenerators}
\end{figure}

\begin{figure}%[htb!]
\begin{tikzpicture}
	\begin{scope}[thin, black!20!white]
		\draw [<->] (-3, 0.5) -- (4, 0.5);
		\draw [<->] (0.5, -3) -- (0.5, 4);
	\end{scope}
	\draw[step=1, black!50!white, very thin] (-2.9, -2.9) grid (3.9, 3.9);

\foreach \x in {-2,...,1}
{	
	\filldraw (\x+0.45, \x+0.65) circle (2pt) node[] (a){};
	\filldraw (\x-0.35, \x+0.65) circle (2pt) node[] (b){};
	\filldraw (\x-0.35, \x+2.35) circle (2pt) node[] (c){};
	\filldraw (\x+0.35, \x+2.35) circle (2pt) node[] (d){};
	\filldraw (\x+0.35, \x+1.35) circle (2pt) node[] (e){};
	\filldraw (\x+1.35, \x+1.35) circle (2pt) node[] (f){};
	\filldraw (\x+1.35, \x+0.35) circle (2pt) node[] (g){};
	\filldraw (\x+2.35, \x+0.35) circle (2pt) node[] (h){};
	\filldraw (\x+2.35, \x-0.35) circle (2pt) node[] (i){};
	\filldraw (\x+0.65, \x-0.35) circle (2pt) node[] (j){};	
	\filldraw (\x+0.65, \x+0.45) circle (2pt) node[] (k){};	
	\draw [very thick, ->] (a) -- (b);
	\draw [very thick, ->] (c) -- (b);
	\draw [very thick, ->] (d) -- (c);
	\draw [very thick, ->] (d) -- (e);
	\draw [very thick, ->] (f) -- (e);
	\draw [very thick, ->] (f) -- (g);
	\draw [very thick, ->] (h) -- (g);
	\draw [very thick, ->] (h) -- (i);
	\draw [very thick, ->] (i) -- (j);
	\draw [very thick, ->] (k) -- (j);
	\draw [very thick, ->] (e) -- (b);
	\draw [very thick, ->] (f) -- (a);
	\draw [very thick, ->] (f) -- (k);
	\draw [very thick, ->] (g) -- (j);
}	

\end{tikzpicture}
\caption{The complex $\CFKi(K)$ expressed in terms of an $\mathbb{F}$-basis.}
\label{fig:CFKicable}
\end{figure}

The first step is understanding the filtered complex $\CFKi(S^3,T)$. From \cite[Section
5]{Petkovacables} (see also \cite[Table 1.0.6]{Heddenthesis}), $\CFKh(S^3,T)$ has rank 11, with
gradings and differentials as depicted in Figure \ref{fig:CFKhatcable}.  Therefore, we obtain a
basis over $\mathbb{F}[U,U^{-1}]$ for $\CFKi(S^3,T)$ with generators labeled and drawn in the
$(i,j)$-plane as in Figure~\ref{subfig:pre}.  Further, we have computed the subquotient complex
$C\{i=0\}$ inside of $\CFKi(S^3,T)$.  The symmetry between $C\{i=0\}$ and $C\{j=0\}$ determines $C\{j =
0\}$, except for the component of $\partial^\infty$ from $C\{(0,0)\}$ to $C\{(-1,0)\}$, as the
generators $a$ and $k$ have the same Alexander and Maslov gradings.  By $U$-equivariance, a large
portion of the differential $\partial^\infty$ on $\CFKi(S^3,T)$ is consequently determined.  This
information is summarized in Figure~\ref{subfig:pre}. Note that the $U$-exponent of a element does not determine the $\mathcal{I}$-filtration since our basis over $\F[U, U^{-1}]$ is not concentrated in $\mathcal{I}$-filtration zero; the same statement applies later to our calculation of $\CFKi(Z,\T)$.

We must determine the rest of the differential on $\CFKi(S^3,T)$.  We start by completing the
differential on the subquotient complex $C\{j = 0\}$, where it remains to determine the
differentials of $a$ and $k$.  The symmetry between $C\{i = 0\}$ and $C\{j=0\}$ shows that the
differential of either $a$ or $k$ (or both) must be $b$.  Since $\varep(T) = -1$ \cite[Theorem
2]{Homcables}, it must be that, up to a change of basis, the differential sends $a$ to $b$ and $k$
to 0 by \cite[Lemma 3.2]{Homcables}.  Therefore, the differentials which preserve one of the two
$\mathbb{Z}$-filtrations have been computed, and it remains to determine differentials which
strictly lower both filtration levels, i.e., ``diagonal arrows''.   By grading considerations (i.e., that the differential lowers grading by $1$) and
$(\d^\infty)^2=0$, the remaining components of $\d^\infty$ are completely determined; the end result
is shown in Figure~\ref{fig:CFKicablegenerators} (expressed in an $\mathbb{F}[U,U^{-1}]$-basis) and
Figure~\ref{fig:CFKicable} (expressed in an $\mathbb{F}$-basis).

Next, we determine the self-chain homotopy equivalence $\Psi^\infty \co C \to C$ identifying $C\{i \leq0\}$
and $C\{j\leq0\}$.  Since $H_*(C\{i\leq0\}) \cong H_*(C\{j\leq0\}) \cong \F[U]$, $\Psi^\infty$ is unique up
to chain homotopy (cf. \cite[Lemma 2.14]{HeddenLevine}). The $\F[U]$-equivariant map on $\CFKi$ which fixes $f$ and interchanges
\[ a \leftrightarrow k  \qquad b \leftrightarrow j  \qquad c \leftrightarrow i  \qquad d \leftrightarrow h  \qquad e \leftrightarrow g \]
induces such a chain homotopy equivalence, so we may assume that $\Psi^\infty$ is given by this map. (The constructions in the filtered mapping cone formula depend only on the chain homotopy type of $\Psi^\infty$.)

We now apply Theorem \ref{thm:surgery} to compute $\CFKi(Z,\T)$. Since $g(T)=3$, the complex
$\X^\infty$ is a mapping cone
\[
\bigoplus_{s=-2}^3 A_s^\infty \to \bigoplus_{s=-1}^3 B_s^\infty,
\]
with $\II$ and $\JJ$ filtrations as defined in Section \ref{sec:surgery-cfk}. As discussed above, in
order to pass from $\X^\infty$ to a reduced model for $\CFKi(Z,\T)$, we begin by considering
$\CFKh(Z,\T)$.

We will work out $\CFKh(Z,\T,1)$ explicitly; the remaining computations are similar and are left to the
reader. By \eqref{eq:hat-mapping-cone}, $\CFKh(Z,\T,1)$ is given by the mapping cone
\begin{equation} %\label{eq:hat-mapping-cone}
 A_1\{i \le 0, j=1\} \oplus A_2\{i = 0, j \le 1\} \xrightarrow {(h_1, \ v_2)} B_2\{i=0\}.
\end{equation}
We depict $A_1$, $A_2$, and $B_2$ in Figures \ref{subfig:A1}, \ref{subfig:A2}, and \ref{subfig:B2},
respectively. Generators of $A_i$ are labeled with an $i$-subscript, and generators of $B_i$ are labeled with a prime and an $i$-subscript. The dark grey shaded regions in Figure
\ref{fig:CFK_1core} thus determine $\CFKh(Z,\T,1)$. The generators and differentials of
$\CFKh(Z,\T,1)$ are as follows:
\begin{align*}
\d (U^{-1}b_1) &= j'_2 &	    \d (U^{-1}b_2) &= U^{-1}b'_2 		&\d (U^{-1}c'_2) &= U^{-1}b'_2 \\
\d (e_1) &= Ug'_2 &           \d (e_2) &= 	e'_2				&\d (d'_2) &= e'_2 \\
\d (Ud_1) &= U^2h'_2+Uc_1 &   \d (a_2) &=	a'_2				&\d (U^{-1}b'_2) &= 0 \\
\d (Uc_1) &= U^2i'_2 &        \d (k_2) &=	k'_2+j_2			&\d (e'_2) &= 0 \\
&&                          \d (Uf_2) &= Uf'_2 +Ug_2			&\d (a_2') &= 0 \\
&&                          \d (j_2) &= j'_2					&\d (k'_2) &= j'_2 \\
&&                          \d (Ug_2) &= Ug'_2				&\d (Uf'_2) &= Ug'_2 \\
&&                          \d (U^2h_2) &= U^2h'_2 +U^2i_2	&\d (j'_2) &= 0 \\
&&                          \d (U^2i_2) &= U^2i'_2	 		&\d (Ug'_2) &= 0 \\
&&			                &                       		&\d (U^2h'_2) &= U^2i'_2 \\
&&			                &                       		&\d (U^2i'_2) &= 0.
\end{align*}

It follows that $\HFKh(Z,\T,1)$ is generated by
\begin{equation} \label{eq:HFK-1-gens}
\{U^{-1}b_2+U^{-1}c'_2, \quad e_2+d'_2, \quad U^{-1}b_1+k'_2, \quad e_1+Uf'_2\}.
\end{equation}
We may likewise compute $\HFKh(Z,\T)$ in the other Alexander gradings using the same techniques. The
results are summarized in the first three columns of Table \ref{tab:CFKinftycore}.

\begin{figure}[htb!]
\subfigure[]{
\begin{tikzpicture}[scale=0.85]

\filldraw[black!5!white] (0, -3) rectangle (1, 1);
\filldraw[black!25!white] (-3, 1) rectangle (1, 2);

	\begin{scope}[thin, black!20!white]
		\draw [<->] (-3, 0.5) -- (4, 0.5);
		\draw [<->] (0.5, -3) -- (0.5, 4);
	\end{scope}
	\draw[step=1, black!50!white, very thin] (-2.9, -2.9) grid (3.9, 3.9);

\foreach \x in {-2,...,1}
{	
	\filldraw (\x+0.45, \x+0.65) circle (2pt) node[] (a){};
	\filldraw (\x-0.35, \x+0.65) circle (2pt) node[] (b){};
	\filldraw (\x-0.35, \x+2.35) circle (2pt) node[] (c){};
	\filldraw (\x+0.35, \x+2.35) circle (2pt) node[] (d){};
	\filldraw (\x+0.35, \x+1.35) circle (2pt) node[] (e){};
	\filldraw (\x+1.35, \x+1.35) circle (2pt) node[] (f){};
	\filldraw (\x+1.35, \x+0.35) circle (2pt) node[] (g){};
	\filldraw (\x+2.35, \x+0.35) circle (2pt) node[] (h){};
	\filldraw (\x+2.35, \x-0.35) circle (2pt) node[] (i){};
	\filldraw (\x+0.65, \x-0.35) circle (2pt) node[] (j){};	
	\filldraw (\x+0.65, \x+0.45) circle (2pt) node[] (k){};	
	\draw [thick, ->] (a) -- (b);
	\draw [thick, ->] (c) -- (b);
	\draw [thick, ->] (d) -- (c);
	\draw [thick, ->] (d) -- (e);
	\draw [thick, ->] (f) -- (e);
	\draw [thick, ->] (f) -- (g);
	\draw [thick, ->] (h) -- (g);
	\draw [thick, ->] (h) -- (i);
	\draw [thick, ->] (i) -- (j);
	\draw [thick, ->] (k) -- (j);
	\draw [thick, ->] (e) -- (b);
	\draw [thick, ->] (f) -- (a);
	\draw [thick, ->] (f) -- (k);
	\draw [thick, ->] (g) -- (j);
}		
\node[left] at (-1.35, 1.35) {$Uc_1$};
\node[above,xshift=-10] at (-0.35, 1.35) {$Ud_1$};
\node[left] at (0.35, 1.35) {$e_1$};
\node[left] at (0.78, 1.85) {\scriptsize$U^{-1}b_1$};
\end{tikzpicture}
\label{subfig:A1}
}
\subfigure[]{
\begin{tikzpicture}[scale=0.85]

\filldraw[black!25!white] (0, -3) rectangle (1, 2);
\filldraw[black!05!white] (-3, 2) rectangle (1, 3);

	\begin{scope}[thin, black!20!white]
		\draw [<->] (-3, 0.5) -- (4, 0.5);
		\draw [<->] (0.5, -3) -- (0.5, 4);
	\end{scope}
	\draw[step=1, black!50!white, very thin] (-2.9, -2.9) grid (3.9, 3.9);

\foreach \x in {-2,...,1}
{	
	\filldraw (\x+0.45, \x+0.65) circle (2pt) node[] (a){};
	\filldraw (\x-0.35, \x+0.65) circle (2pt) node[] (b){};
	\filldraw (\x-0.35, \x+2.35) circle (2pt) node[] (c){};
	\filldraw (\x+0.35, \x+2.35) circle (2pt) node[] (d){};
	\filldraw (\x+0.35, \x+1.35) circle (2pt) node[] (e){};
	\filldraw (\x+1.35, \x+1.35) circle (2pt) node[] (f){};
	\filldraw (\x+1.35, \x+0.35) circle (2pt) node[] (g){};
	\filldraw (\x+2.35, \x+0.35) circle (2pt) node[] (h){};
	\filldraw (\x+2.35, \x-0.35) circle (2pt) node[] (i){};
	\filldraw (\x+0.65, \x-0.35) circle (2pt) node[] (j){};	
	\filldraw (\x+0.65, \x+0.45) circle (2pt) node[] (k){};	
	\draw [thick, ->] (a) -- (b);
	\draw [thick, ->] (c) -- (b);
	\draw [thick, ->] (d) -- (c);
	\draw [thick, ->] (d) -- (e);
	\draw [thick, ->] (f) -- (e);
	\draw [thick, ->] (f) -- (g);
	\draw [thick, ->] (h) -- (g);
	\draw [thick, ->] (h) -- (i);
	\draw [thick, ->] (i) -- (j);
	\draw [thick, ->] (k) -- (j);
	\draw [thick, ->] (e) -- (b);
	\draw [thick, ->] (f) -- (a);
	\draw [thick, ->] (f) -- (k);
	\draw [thick, ->] (g) -- (j);
}		
\node[] at (1.2, 2) {\tiny$U^{-1}b_2$};
\node[] at (0.1, 1.4) {\small$e_2$};
\node[] at (0.4, 0.9) {\scriptsize$a_2$};
\node[] at (0.9, 0.4) {\scriptsize$k_2$};
\node[] at (0, 0.5) {\tiny$Uf_2$};
\node[] at (0.5, -0.14) {\scriptsize$j_2$};
\node[below] at (0.4, -0.65) {\small$U g_2$};
\node[right] at (0.35, -1.75) {$U^2 h_2$};
\node[right] at (0.35, -2.45) {$U^2 i_2$};
\end{tikzpicture}
\label{subfig:A2}
}
\subfigure[]{
\begin{tikzpicture}[scale=0.85]

	\filldraw[black!25!white] (0, -3) rectangle (1, 4);

	\draw[step=1, black!50!white, very thin] (-2.9, -2.9) grid (3.9, 3.9);

	\begin{scope}[thin, black!20!white]
		\draw [<->] (-3, 0.5) -- (4, 0.5);
		\draw [<->] (0.5, -3) -- (0.5, 4);
	\end{scope}

\foreach \x in {-2,...,1}
{	
	\filldraw (\x+0.45, \x+0.65) circle (2pt) node[] (a){};
	\filldraw (\x-0.35, \x+0.65) circle (2pt) node[] (b){};
	\filldraw (\x-0.35, \x+2.35) circle (2pt) node[] (c){};
	\filldraw (\x+0.35, \x+2.35) circle (2pt) node[] (d){};
	\filldraw (\x+0.35, \x+1.35) circle (2pt) node[] (e){};
	\filldraw (\x+1.35, \x+1.35) circle (2pt) node[] (f){};
	\filldraw (\x+1.35, \x+0.35) circle (2pt) node[] (g){};
	\filldraw (\x+2.35, \x+0.35) circle (2pt) node[] (h){};
	\filldraw (\x+2.35, \x-0.35) circle (2pt) node[] (i){};
	\filldraw (\x+0.65, \x-0.35) circle (2pt) node[] (j){};	
	\filldraw (\x+0.65, \x+0.45) circle (2pt) node[] (k){};	
	\draw [thick, ->] (a) -- (b);
	\draw [thick, ->] (c) -- (b);
	\draw [thick, ->] (d) -- (c);
	\draw [thick, ->] (d) -- (e);
	\draw [thick, ->] (f) -- (e);
	\draw [thick, ->] (f) -- (g);
	\draw [thick, ->] (h) -- (g);
	\draw [thick, ->] (h) -- (i);
	\draw [thick, ->] (i) -- (j);
	\draw [thick, ->] (k) -- (j);
	\draw [thick, ->] (e) -- (b);
	\draw [thick, ->] (f) -- (a);
	\draw [thick, ->] (f) -- (k);
	\draw [thick, ->] (g) -- (j);
}		
\node[] at (0.7, 3.7) {$U^{-1}c'_2$};
\node[] at (0.3, 2.7) {$d'_2$};
\node[] at (1.2, 2) {\tiny$U^{-1}b'_2$};
\node[] at (0.1, 1.4) {\small$e'_2$};
\node[] at (0.4, 0.9) {\scriptsize$a'_2$};
\node[] at (0.9, 0.4) {\scriptsize$k'_2$};
\node[] at (0, 0.5) {\tiny$Uf'_2$};
\node[] at (0.5, -0.14) {\scriptsize$j'_2$};
\node[below] at (0.4, -0.65) {\small$U g'_2$};
\node[right] at (0.35, -1.75) {$U^2 h'_2$};
\node[right] at (0.35, -2.45) {$U^2 i'_2$};
\end{tikzpicture}
\label{subfig:B2}
}
\caption{Top left, $A_1$. Top right, $A_2$. Bottom, $B_2$. The dark grey regions comprise
$\CFKh(Z,\T,1)$.}
\label{fig:CFK_1core}
\end{figure}

\begin{table}[htb!]
%\begin{center}
\centering
\renewcommand{\arraystretch}{1.2}
\begin{tabular}{rcccl}%{*{5}{rcccl}}
\hline
& Generator & Bigrading & $\d^\infty$ & \\ \hline
$A=$ & $U^{-1}c_3$ & $(3,8)$ & $U^{-1}b_3+U^{-1}c'_3$ & $=B$\\
$B=$ & $U^{-1}b_3+U^{-1}c'_3$ & $(2,7)$ & $0$ & \\
$C=$ & $U^{-1}b_2+U^{-1}c'_2$ & $(1,3)$ & $(Uj'_3)$  & $\equiv 0$ \\
$D=$ & $e_2+d'_2 $& $(1,2)$ & $b_2+c'_2 \; + ( U^2g'_3)$ & $\equiv UC$ \\
$E=$ & $U^{-1}b_1+k'_2$ & $(1,1)$ & $(U^{-1}b'_1)$ & $\equiv 0$ \\
$F=$ & $e_1+ Uf'_2$ & $(1,0)$ & \makecell{$b_1+Uk'_2 $ \\ $ + \; ( Ue'_2+Ua'_2+e'_1)$} & $\equiv UE$  \\
$G=$ & $k_0+a_1$ & $(0,0)$ & $j_0+k'_0+b_1+Uk'_2$ & $= J + UE$ \\
$H=$ & $j_{-1}+U^{-1}c'_0$& $(-1,1)$ & $(j'_{-1})$ & $\equiv 0$ \\
$I=$ & $Ug_{-1}+d'_0$ & $(-1,0)$ & $Uj_{-1}+c'_0 \; + [Ug'_{-1}]$ & $\equiv UH + [U^2L]$ \\
$J=$ & $j_0+k'_0$ & $(-1,-1)$ & $(b'_1)$ & $\equiv 0$ \\
$K=$ & $Ug_0+Uf'_0$ & $(-1,-2)$ & \makecell{$Uj_0+Uk'_0$ \\ $+ \; (Ue'_0+Ue'_1) + [Ua'_0]$} & $\equiv UJ + [U^2H]$ \\
$L=$ & $Uj_{-2}+U^{-1}c'_{-1}$ & $(-2,3)$ & $0$ & \\
$M=$ & $U^2 i_{-2}$ & $(-3,2)$ & $U^2j_{-2}+c'_{-1}$ & $=UL$\\ \hline
\end{tabular}
%\end{center}
\caption{Summary of the generators of $\X^\infty$ which survive in the reduced model for $\CFKi(Z,
\T)$. The bigrading is (Alexander, Maslov). In the $\partial^\infty$ column, the terms in parentheses become trivial in the reduced model, while the terms in square brackets become homologous to the bracketed terms on the right, leading to the congruences shown. The differentials are shown graphically in Figure \ref{fig:CFKinftycore2}, below.}
\label{tab:CFKinftycore}
\end{table}

Next, we compute a reduced model for $\CFKi(Z, \T)$ by the method described at the end of Section
\ref{sec:surgery-cfk}.  Recall that we consider the generators of $\HFKh(Z, \T)$ as generators
(over
$\F[U,U^{-1}]$) of the reduced model for $\CFKi(Z, \T)$ and compute the induced differential, each
term of which lowers at least one of the $\II$ and $\JJ$ filtrations. More precisely, we consider the
basis for the (unreduced) complex $\X^\infty$ given by the filtered mapping cone and choose
representatives for the generators of $\HFKh(Z, \T)$. The differential $\d^\infty$ on $\CFKi(Z,
\T)$
decomposes as a sum $\d^\infty=\d + \d'$, where $\d$ preserves the bifiltration, and $\d'$ lowers at
least one of the filtrations. Choose a basis $\{y_i\}$ for $\Im \d$, and for each $y_i$, choose an
$x_i$ such that $\d x_i = y_i$. We quotient $\CFKi(Z, \T)$ by the $\F[U,U^{-1}]$--submodule spanned
by $\{ x_i, \d^\infty x_i\}$. This produces the reduced model for $\CFKi(Z,\T)$, and we can write
the induced differential on the reduced model for $\CFKi(Z, \T)$ purely in terms of generators of
$\HFKh(Z,\T)$.

\begin{figure} [htb!]
\begin{tikzpicture}

	\begin{scope}[thin, black!20!white]
		\draw [<->] (-3, 0.5) -- (4, 0.5);
		\draw [<->] (0.5, -4) -- (0.5, 4);
	\end{scope}
	\draw[step=1, black!50!white, very thin] (-2.9, -3.9) grid (3.9, 3.9);
	
	\filldraw (-1.5, 1.5) circle (2pt) node[] (A){};
	\filldraw (-1.5, 0.5) circle (2pt) node[] (B){};
	\filldraw (-0.65, 0.35) circle (2pt) node[] (D){};	
	\filldraw (-1.5, -0.5) circle (2pt) node[] (C){};
	\filldraw (0.55, 1.5) circle (2pt) node[] (F){};	
	\filldraw (-0.45, 0.55) circle (2pt) node[] (E){};	
	\filldraw (0.55, 0.55) circle (2pt) node[] (G){};
	\filldraw (0.55,-0.45) circle (2pt) node[] (J){};
	\filldraw (1.5, 0.55) circle (2pt) node[] (K){};
	\filldraw (0.35, -0.65) circle (2pt) node[] (I){};
	\filldraw (-0.5, -1.5) circle (2pt) node[] (H){};
	\filldraw (-0.5, -3.5) circle (2pt) node[] (M){};
	\filldraw (-1.5, -3.5) circle (2pt) node[] (L){};

	\draw [very thick, ->] (A) -- (B);
	\draw [very thick, ->] (D) -- (C);
	\draw [very thick, ->] (F) -- (E);
	\draw [very thick, ->] (G) -- (E);
	\draw [very thick, ->] (G) -- (J);
	\draw [very thick, ->] (I) -- (H);
	\draw [very thick, ->] (K) -- (J);
	\draw [very thick, ->] (M) -- (L);
	\path [very thick, ->, bend left=35] (K) edge node[]{} (H);
	\path [very thick, ->, bend left=15] (I) edge node[]{} (L);

	\node [above] at (A) {$U^2A$};
	\node [left] at (B) {$U^2B$};
	\node [left] at (C) {$U^2C$};
	\node [below,xshift=5] at (D) {$UD$};
	\node [above,xshift=-5] at (E) {$UE$};
	\node [above] at (F) {$F$};
	\node [above] at (G) {$G$};
	\node [left] at (H) {$UH$};
	\node [left] at (I) {$I$};
	\node [right,xshift=1] at (J) {$J$};
	\node [right] at (K) {$U^{-1}K$};
	\node [below] at (L) {$U^2L$};
	\node [right] at (M) {$UM$};

\end{tikzpicture}
\caption{The reduced model for $\CFKi(Z,\T)$, drawn in the $(i,j)$-plane, before the change of basis resulting in Figure \ref{fig:CFKinftycore}. }
\label{fig:CFKinftycore2}
\end{figure}

We will carry out the strategy above explicitly for the generators of $\HFKh(Z,\T)$ with Alexander
grading 1 (listed in \eqref{eq:HFK-1-gens}) with the help of Figure \ref{fig:CFK_1core}. We have
\begin{align*}
	\d^\infty (U^{-1}b_2+U^{-1}c'_2) &= Uj'_3 \\
	\d^\infty (e_2+d'_2) &= b_2+c'_2+U^2g'_3  \\
	\d^\infty(U^{-1}b_1+k'_2) &= U^{-1}b'_1 \\
	\d^\infty (e_1 + U f'_2) &= b_1+Uk'_2 + Ue'_2+Ua'_2+e'_1.
\end{align*}
In order to compute the induced differential on the reduced model for $\CFKi(Z, \T)$, we note that
\begin{equation}\label{eqn:imd}
U^{-1}b'_1, \ e'_1, \ a'_2, \ e'_2, \ j'_3, \ Ug'_3, \ Uk'_3+Uj_3
\end{equation}
are all in the image of $\d$. We choose preimages (under $\d$) for each of the elements in \eqref{eqn:imd}, and compute $\d^\infty$ of these preimages:

\begin{align*}
\d (U^{-1}c'_1) &= U^{-1}b'_1 			&\d^\infty (U^{-1}c'_1) &= U^{-1}b'_1 \\
\d (d'_1) &= e'_1		 				&\d^\infty (d'_1) &= e'_1+c'_1 \\
\d (a_2) &= a'_2						&\d^\infty (a_2) &= a'_2+U^2k'_3+b_2 \\
\d (e_2) &= e'_2						&\d^\infty (e_2) &= e'_2+U^2g'_3+b_2\\
\d (j_3) &= j'_3						& \d^\infty (j_3) &= j'_3 \\
\d (Ug_3) &= Ug'_3					& \d^\infty (Ug_3) &= Ug'_3+Uj_3 \\
\d (Uk_3) &= Uk'_3 +Uj_3				& \d^\infty (Uk_3) &= Uk'_3+Uj_3. \\
\end{align*}

Thus, after quotienting by the $\F[U,U^{-1}]$-submodule $S$ generated by
\begin{multline*}
\{ U^{-1}c'_1, \ \d^\infty U^{-1}c'_1, \ d'_1, \ \d^\infty d'_1, \ a_2, \ \d^\infty a_2, \ e_2, \ \d^\infty e_2, \\ j_3, \ \d^\infty j_3, \ Ug_3, \ \d^\infty Ug_3, \ Uk_3, \ \d^\infty Uk_3 \},
\end{multline*}
it readily follows that $Uj'_3$ and $U^{-1}b'_1$ are trivial. Similarly, we have that $U^2g'_3$ is trivial, as $Ug'_3+Uj_3$ and $j_3$ are both in $S$. We also have that $Ue'_2+Ua'_2+e'_1$ is trivial, as the elements
\[
a'_2+U^2k'_3+b_2, \ e'_2+U^2g'_3+b_2, \ Uk'_3+Uj_3, \ Ug'_3+Uj_3, \ e'_1+c'_1, \ U^{-1}c'_1
\]
are all in $S$.

As the elements in \eqref{eqn:imd} are linearly independent, they can be completed to a basis for
$\Im \d$, and we may continue this procedure to compute the induced differential on a reduced model
for $\CFKi(Z, \T)$.
%%%%%%%%%%%%
%Note that $Uj'_3=\d^\infty Uj_3=\d Uj_3$, i.e., $Uj'_3$ is trivial in the reduced model. Similarly, $\d U^2g_3=U^2g'_3$ and $\d^\infty U^2g_3=U^2g'_3+U^2j_3$, hence $U^2g'_3$ is equivalent to $U^2j_3$ in the reduced model. Since $\d U^2j_3 \neq 0$, it follows that $U^2j_3$ is trivial in the reduced model. We also have that $U^{-1}b'_1 = \d^\infty U^{-1}c'_1 = \d U^{-1}c'_1$, and so $U^{-1}b'_1$ is trivial in the reduced model. Furthermore, we have
%\begin{align*}
%\d Ue_2 &=Ue'_2 \\
%\d Ua_2 &=Ua'_2  \\
%\d^\infty (Ue_2+Ua_2) &= Ue'_2+Ua'_2 + U^3g'_3+U^3 k'_3 \\
%\d^\infty(U^3g_3+U^3k_3) &=  U^3g'_3+U^3 k'_3 \\
%\d U^3g_3 &\neq 0 \\
%\d U^3k_3 &\neq 0,
%\end{align*}
%hence $Ue'_2+Ua'_2$ is non-trivial in the reduced model. Lastly, we have $e'_1=\d d'_1$ and $\d^\infty d'_1 = e'_1+c'_1$ and $\d c'_1 \neq 0$, hence $e'_1$ is trivial in the reduced model.

The calculations in the remaining Alexander gradings follow similarly and are summarized in Table \ref{tab:CFKinftycore}. We need to verify that the remaining terms in parentheses in the $\d^\infty$ column of Table \ref{tab:CFKinftycore} are indeed trivial in the reduced model and to express the terms in the square brackets in terms of generators of homology. Note that
\begin{equation}\label{eq:S'}
   a'_{-1}, \ e'_{-1}, \ Ug'_{-1}, \ j'_{-1}, \ a'_0, \ U^{-1}b'_0, \ e'_0
\end{equation}
are all in the image of $\d$. As above, we choose preimages (under $\d$) for each of the elements in \eqref{eq:S'}, and compute their images under $\d^\infty$:
\begin{align*}
\d (U^2 k_{-2}) &= a'_{-1}			&\d^\infty (U^2 k_{-2}) &= a'_{-1} + U^2j_{-2} \\
\d (d'_{-1}) &= e'_{-1}			&\d^\infty (d'_{-1}) &= e'_{-1}+c'_{-1} \\
\d (Uf'_{-1}) &= Ug'_{-1}			&\d^\infty (Uf'_{-1}) &= Ug'_{-1}	+Ua'_{-1}+Ue'_{-1}+Uk'_{-1} \\
\d (k'_{-1}) &= j'_{-1}			&\d^\infty (k'_{-1}) &= j'_{-1} \\
\d (Uk_{-1}) &= a'_0				&\d^\infty (Uk_{-1}) &= a'_0 + Uj_{-1} + Uk'_{-1} \\
\d (U^{-1}c'_0) &= U^{-1}b'_0				&\d^\infty (U^{-1}c'_0) &= U^{-1}b'_0 \\
\d (d'_0) &= e'_0,				&\d^\infty (d'_0) &= e'_0 + c'_0.
\end{align*}
Let $S'$ be the $\F[U,U^{-1}]$-submodule generated by $S$ together with
\begin{multline*}
\{ k_{-2}, \ \d^\infty k_{-2}, \ d'_{-1}, \ \d^\infty d'_{-1}, \ f'_{-1}, \ \d^\infty f'_{-1}, \ k'_{-1}, \ \d^\infty k'_{-1}, \\
 k_{-1}, \ \d^\infty k_{-1}, \ c'_0, \ \d^\infty c'_0, \ d'_0, \ \d^\infty d'_0 \}.
\end{multline*}
We assume that we quotient by $S'$ while passing to the reduced model. Note that $j'_{-1}$, $b'_1$, $U e'_0$, and $Ue'_1$ are all in $S'$; thus, the remaining terms in parentheses in Table \ref{tab:CFKinftycore} become trivial in the reduced model.

%We see immediately that $j'_{-1}$ and that $b'_1$ are in $S'$.
%Moreover, $Ue'_0$ is trivial in the reduced model, as $Ue'_0 + Uc'_0$ and $Uc'_0$ are in $S'$; also, $Ue'_1$ is trivial in the reduced model, since $Ue'_1 \in S \subset S'$.

To see that $Ug'_{-1}$ is equivalent to $U^2L$ in the reduced model, we note that we have the following equivalences:
\[ Ug'_{-1} \equiv Ua'_{-1}+Ue'_{-1}+Uk'_{-1} \equiv Ua'_{-1}+Ue'_{-1} \equiv U^3j'_{-2}+Uc'_1 \equiv U^2L, \]
since the elements
\[
Ug'_{-1}+Ua'_{-1}+Ue'_{-1}+Uk'_{-1}, \ Uk'_{-1}, \  Ua'_{-1}+U^3j'_{-2}, \ Ue'_{-1}+Uc'_1
\]
are all in $S'$. Likewise, we have that
\[ Ua'_0 \equiv U^2j_{-1} +U^2k'_{-1}\equiv U^2j_{-1} + U c'_0 \equiv UH \]
since $Ua'_0 + U^2j_{-1}+U^2k'_{-1}$, $U^2k'_{-1}$, and $Uc'_0$ are in $S'$.

The differential on this reduced model for $\CFKi(Z, \T)$ is summarized in the rightmost column in
Table \ref{tab:CFKinftycore} and graphically in Figure \ref{fig:CFKinftycore2}. To obtain Figure \ref{fig:CFKinftycore} from Figure \ref{fig:CFKinftycore2}, we perform a filtered change of basis, replacing $I$ with $I+UM$ and $K$ with $K +UI+U^2M$. (By slight abuse of notation, we use the same symbols.) This completes the proof of Proposition \ref{prop:cfkcore}.
%\qed

%
%	 \tau(Y,K) &= \min \{ s \mid \Im (\rho_* \circ \iota_{s*}) \cap U^N \HF^+(Y) \neq 0 \textup{ for all } N \gg 0\} \\
%	 \nu(Y,K) &= \min \{ s \mid \Im (\rho_* \circ v_{s*}) \cap U^N \HF^+(Y) \neq 0 \textup{ for all } N \gg 0\} \\
%	 \nu'(Y,K) &= \max \{ s \mid \exists \ x \in \HFh(Y) \textup{ such that } v'_{s*}(x) \neq 0, \ \rho_*(x) \ne 0, \

\appendix
\section{Smoothings of PL surfaces in 4-manifolds} \label{app:PL-smooth}

The purpose of this section is to prove the following folklore theorem:

\begin{theorem} \label{thm:PL-smooth}
Let $\Sigma$ be a compact, piecewise-linear surface in a compact, smooth $4$-manifold $X$. If $\Sigma$ has nonempty boundary, assume that $\partial \Sigma = \partial X \cap \Sigma$. Then there is a piecewise-differentiable isotopy taking $\Sigma$ to a surface that is smooth except possibly at finitely many singular points, each of which is smoothly modeled on the cone of a smooth knot in $S^3$.
\end{theorem}

By way of background, earlier papers that dealt with non--locally flat surfaces in $4$-manifolds (e.g. \cite{ZeemanDunce, MatsumotoSpine}) stayed entirely within the PL category. More recent results on the subject (e.g. \cite{AkbulutZeeman, Levinenonsurj, LevineLidmanSpineless}) have been stated in terms of PL surfaces in smooth $4$-manifolds, since that is the setting in which gauge theory and Floer homology are defined, and have implicitly made use of Theorem \ref{thm:PL-smooth}. It is well-known that any locally flat PL surface in a smooth $4$-manifold is isotopic to a smoothly embedded surface; this is a special case of a more general theorem of Wall \cite{WallCodimension2} that holds for any codimension-$2$ PL submanifold of a smooth manifold. However, we have not managed to find a precise statement of Theorem \ref{thm:PL-smooth} (taking into account the singular points) in the literature. At the referee's suggestion, we therefore provide a proof.

Throughout, we will assume that $\Sigma$ is a PL surface in a smooth $4$-manifold $X$, as in the statement of the theorem. Formally, this means that $\Sigma$ is a $2$-dimensional subcomplex in some smooth triangulation of $X$. The surface $\Sigma$ need not be locally flat; it may have singular points $p_1, \dots, p_n$ that are combinatorially modeled on the cones of knots in $S^3$. That is, for each $p_i$, there is a PL homeomorphism from $(D^4, \Cone(K_i))$ to $(V_i,V_i \cap \Sigma)$, where $V_i$ is a closed neighborhood of $p_i$ and $K_i$ is a PL knot in $S^3$. The first claim is that after performing a small isotopy, we can arrange that each $p_i$ has a neighborhood that is in fact smoothly modeled on the cone of $K_i$. The precise statement is as follows:

\begin{lemma} \label{lemma:vertexcone}
Let $\Sigma$ be a PL surface in a smooth $4$-manifold $X$, with $\partial \Sigma \subset \partial X$. After subdivision and piecewise-differentiable isotopy of the triangulation of $X$, we may assume that the following holds: For each vertex $p$ at which $\Sigma$ is not locally flat, there is a smooth coordinate ball $(U,\phi)$ around $p$, disjoint from all other vertices, such that $\phi(U \cap \Sigma)$ is the cone of a PL knot $K \subset S^3$.
\end{lemma}

\begin{proof}
Choose a smooth triangulation of $X$ such that $\Sigma$ is a subcomplex of the $2$-skeleton of $X$. By definition, the triangulation consists of a homeomorphism $h$ from a finite simplicial complex $T$ to $X$ whose restriction to each simplex is a smooth immersion.

Let $p_1, \dots, p_n \in T$ be the vertices where $\Sigma$ fails to be locally flat. (We will view $\Sigma$ as a subset of $T$ rather than $X$.) After sufficiently subdividing $T$, we may assume that the following holds: if $B_i = \St(T,p_i)$, $C_i = \St(T,B_i)$, and $D_i = \St(T,C_i)$, then $D_i$ is a combinatorial $4$-ball, and we may find disjoint charts $(U_i, \phi_i)$ (in the smooth structure of $X$) such that $D_i \subset U_i$. (Here $\St$ denotes the star of a subcomplex.) Observe that $D_i \cap \Sigma$ is (a subdivision of) the cone on some PL knot $K_i$ appearing in the $1$-skeleton of this triangulation. Furthermore, $\Sigma$ is still locally flat at any new vertices introduced in the course of the subdivision; thus, $p_1, \dots, p_n$ are still the only vertices where $\Sigma$ is not locally flat.

Let $\tilde h_i = \phi_i \circ h|_{D_i} \co D_i \to \R^4$; this is a $C^\infty$ embedding of the $4$-dimensional simplicial complex $D_i$ in $\R^4$, as in \cite[Definition 8.3]{MunkresDifferential}.

We now apply the idea of approximating by the secant map, as in \cite[Chapter 9]{MunkresDifferential}. Namely, by \cite[Theorem 9.7]{MunkresDifferential}, for any $\delta>0$, we may find a subdivision $D_i^{(\delta)}$ of $D_i$, and a smooth map $\tilde h_i^{(\delta)} \co D_i^{(\delta)} \to \R^4$, with the following properties:
\begin{enumerate}
\item Outside of $C_i$, we have $D_i^{(\delta)} = D_i$ and $\tilde h_i^{(\delta)} = \tilde h_i$. (That is, simplices of $D_i$ not contained in $C_i$ are not subdivided further in $D_i^{(\delta)}$, and $\tilde h_i^{(\delta)} = \tilde h_i$ on these simplices.)

\item For each simplex $\sigma$ of $B_i^{(\delta)}$, $f_i^{(\delta)}|_\sigma$ equals the linear map induced by the inclusion of the vertices of $\sigma$.

\item $\tilde h_i^{(\delta)}$ is a $\delta$-approximation to $\tilde h_i$.

\item $\im(\tilde h_i^{(\delta)}) = \im(\tilde h_i)$.
\end{enumerate}
(For statement 4, we use the fact that $\tilde h_i = \tilde h_i^{(\delta)}$ on $\partial C_i$, together with the non-retraction theorem.) By \cite[Theorem 8.8]{MunkresDifferential}, if $\delta$ is sufficiently small, then $\tilde h_i^{(\delta)}$ is an embedding with the same image as $\tilde h_i$. Moreover, $\tilde h_i^{(\delta)}$ is isotopic to $\tilde h_i$ via a straight-line homotopy (which is an isotopy by a further application of \cite[Theorem 8.8]{MunkresDifferential}). Thus, $\phi_i^{-1} \circ \tilde h_i^{(\delta)}$ gives a new triangulation of $h_i(D_i) \subset X$ that agrees with the original triangulation away from $h_i(C_i)$, and these can be patched together to give a new triangulation $h'\co T' \to X$ that is isotopic to $h$. By construction, $h'(p_i)$ has a coordinate neighborhood in which $h'(\Sigma)$ is smoothly modeled on the cone of the PL knot $K_i$, as required.
\end{proof}

The second part in the proof is to apply Wall's smoothing theorem for locally flat codimension-2 PL submanifolds of smooth manifolds \cite{WallCodimension2}. Unfortunately, the theorem is only stated in \cite{WallCodimension2} for closed manifolds, but it can easily be adapted to the case of manifolds with boundary. (We presume that this was understood decades ago, but we have not managed to find it written anywhere.) The precise statement is as follows:

\begin{lemma} \label{lemma:smoothing}
Let $V$ be a compact, $n$-dimensional PL manifold, possibly with boundary, and let $V_0$ be a closed collar neighborhood of $\partial V$. Let $M$ be a compact, $(n+2)$-dimensional smooth manifold, possibly with boundary. Let $f \co (V, \partial V) \to (M,\partial M)$ be a piecewise-linear, locally flat embedding. Then any smoothing of  $f|_{V_0}$ can be extended to a smoothing of $f$.
\end{lemma}

\begin{proof}
Wall's proof (in the case where $V$ is closed) relies on obstruction theory for smoothing PL embeddings and immersions that is developed in a two-part paper by Haefliger \cite{HaefligerLissage1, HaefligerLissage2}. The obstruction theory appears in \cite{HaefligerLissage2}, which was never published but has been cited (as ``mimeographed notes'') in numerous other papers. For $V$ closed, Wall notes that by Haefliger's work, the obstructions to smoothing a locally flat embedding of $V$ live in the cohomology groups $H^{i+1}(V;\Gamma_{i-1}^2)$, where $\Gamma_n^q$ denotes the group of concordance classes of smoothings of the standard PL embedding of $S^n$ in $S^{n+q}$. (See \cite{HaefligerLissage1} for the terminology, and specifically Sections 3.5 and 5.1 for the definition and basic properties of these groups.) The main result of Wall's paper is that $\Gamma_n^2=0$ for all $n$; as a consequence, the obstruction automatically vanishes, so any locally flat embedding can be smoothed.

In the relative case, the one-sentence generalization of Wall's argument is that the obstructions to extending a given smoothing of $f|_{V_0}$ live in the relative cohomology groups $H^{i+1}(V, V_0; \Gamma_{i-1}^2)$, which again vanish for the same reason. However, extracting Wall's statement about the cohomology obstruction from Haefliger's paper requires a bit of effort, including in the closed case, so we clarify this here.

For a PL manifold $V$ of dimension $n$, a smooth manifold $M$ of dimension $n+q$, and a locally flat PL embedding $f \co V \to M$, Haefliger \cite[Corollary 11.3]{HaefligerLissage2} states that concordance classes of smoothings of $f$ are in one-to-one correspondence with homotopy classes of sections of a certain bundle with base space $\tilde V$ and fiber $\Gamma^q$, where:
\begin{itemize}
\item The base space $\tilde V$ is a simplicial complex whose $k$-simplices correspond to all PL functions of a $k$-simplex $\Delta^k$ into $V$. (See \cite[p.~8]{HaefligerLissage2} and \cite[p.~83]{HaefligerPoenaru}.)

\item The fiber $\Gamma^q$ is a space with the property that $\pi_i(\Gamma^q) = \Gamma_i^q$ for all $i$, defined in \cite[p.~53]{HaefligerLissage2}.
\end{itemize}
Wall's description of the obstruction in terms of the cohomology of $V$ follows from standard obstruction theory for sections of fiber bundles, along with the observation that there is a natural projection map $\pi \co \tilde V \to V$ inducing an isomorphism on all homology and cohomology groups. (The map $\pi$ is defined on each simplex of $\tilde V$ to be equal to the PL map $\Delta^k \to V$ corresponding to that simplex.) Moreover, in the case where $q=2$, the homotopy groups of the fiber all vanish, so the bundle necessarily admits a section, and thus $f$ admits a smoothing.

Haefliger leaves the relative case of his Corollary 11.3 as an exercise to the reader.\footnote{``Nous laissons au lecteur le soin de formuler l'\'enonc\'e relatif du th\'eor\`eme 11.1 et son corollaire 11.3; il d\'ecoule de la forme relative de 9.3.'' \cite[p.~53]{HaefligerLissage2}} To spell this out, $\tilde V_0$ is naturally a subcomplex of $\tilde V$, and the inclusions $\tilde V_0 \to \tilde V$ and $V_0 \to V$ commute with the projection maps $\pi$, giving an identification $H^*(\tilde V, \tilde V_0) \cong H^*(V,V_0)$ (with any coefficients). The problem of extending a given smoothing of $f|_{V_0}$ is the same as extending a given section defined over $\tilde V_0$. The obstruction theory then proceeds just as above.
\end{proof}

\begin{proof}[Proof of Theorem \ref{thm:PL-smooth}]
By Lemma \ref{lemma:vertexcone}, the surface $\Sigma$ can be assumed to be locally flat except at finitely many singular points $p_1, \dots, p_n$ that are each modeled on cones on PL knots $K_1, \dots, K_n \subset S^3$. This means that there are disjoint closed neighborhoods $V_1, \dots, V_n$, with $v_i \in V_i$, and diffeomorphisms $\phi_i \co (V_i, V_i \cap \Sigma) \to (D^4, \Cone(K_i))$. Let $X' = X - \operatorname{int}(V_1 \cup \dots \cup V_n)$ and $\Sigma' = \Sigma \cap X'$.

Choose a smoothing of $K_i$ in $S^3$ (i.e. an isotopy of $K_i$ to a smooth knot $K'_i$), as well as a smoothing of $\partial \Sigma$ in $\partial X$. Each of these smoothings can be extended to a smoothing on a collar neighborhood of the corresponding boundary component of $\partial \Sigma'$.  Lemma \ref{lemma:smoothing} (with $M' = X$ and $V = \Sigma'$) then shows that the smoothing can be extended over the rest of $\Sigma'$. Moreover, the smoothing of each $K_i$ can be extended radially over $U_i$ to give a smoothing of $(\Sigma \cap U_i) - \{p_i\}$. The resulting surface is smooth away from $p_1, \dots, p_n$, as required.
\end{proof}

\begin{remark} \label{rmk: smoothing}
A possible alternate proof of Theorem \ref{thm:PL-smooth} that avoids the machinery of \cite{HaefligerLissage2} might proceed as follows. First, one would need a stronger version of Lemma \ref{lemma:vertexcone} showing that after a subdivision and perturbation of the triangulation, we can arrange that every vertex of $\Sigma$ has a neighborhood in which $\Sigma$ is smoothly modeled on the cone of a knot, rather than just the vertices where $\Sigma$ is not locally flat. This seems likely to be correct, but we were unable to find a rigorous proof. (The issue is that the technique of approximating by the secant map within a smooth chart, as above, involves subdividing and hence introducing new vertices, so it is unclear how to handle simplices that run from one chosen chart to another.) Assuming this, one could then isotope $\Sigma$ to be smooth in a neighborhood of each of its locally flat vertices (since the knot around each such vertex can be isotoped to a smooth unknot). The final step would then be to smooth $\Sigma$ along its edges by flattening the angles at which pairs of faces of $\Sigma$ meet along edges, which can be done in local coordinates. Note that this would give an alternate proof of Wall's theorem in dimension 4 (i.e., Lemma \ref{lemma:vertexcone} for $n=2$).
\end{remark}

\bibliographystyle{amsalpha}
\bibliography{bib}

\end{document}